%LemWoo06.tex
%As sent to journal, 18 December 2007
\documentclass[a4paper,12pt]{amsart}
\usepackage{amscd,amsthm,amsfonts,latexsym,amssymb}

\newtheorem{thm}{Theorem}[section]
\newtheorem{lem}[thm]{Lemma}
\newtheorem{cor}[thm]{Corollary}
\newtheorem{prop}[thm]{Proposition}

\theoremstyle{definition}

\newtheorem{defn}[thm]{Definition}
\newtheorem{rem}[thm]{Remark}
\newtheorem{exm}[thm]{Example}

\numberwithin{equation}{section}

\hyphenation{dim-en-sion-al} %ignores in $2$-dimensional etc.

\begin{document}

\renewcommand{\vec}[1]{\boldsymbol{#1}}
%for vectors (\vec usually
%produces a small over arrow, hence the renewcommand)

\renewcommand{\log}{\ln} %to make all logs into lns

\newcommand{\dd}{{\mathrm d}}  %differential
\newcommand{\pa}{{\partial}}
 
\newcommand{\ZZ}{{\mathbb Z}}  %integers
\newcommand{\RR}{{\mathbb R}}  %real numbers
\newcommand{\CC}{{\mathbb C}}  %complex numbers
\newcommand{\Ci}{{\mathbb C} \cup \{ \infty\} }  %extd cx plane
\newcommand{\HH}{{\mathbb H}}  %quaternions

\newcommand{\Ll}{{\mathcal L}}  %Lie derivative/space of linear maps
\newcommand{\Hh}{{\mathcal H}}  %horizontal space/projn
\newcommand{\Vv}{{\mathcal V}}  %vertical space or projn
\newcommand{\Mm}{{\mathcal M}}  %augmented matrix
\newcommand{\Aa}{{\mathcal A}}  %second fund form

\newcommand{\CP}{{\mathbb C}P} %complex projective space
\newcommand{\HP}{{\mathbb H}P} %quaternionic projective space
\newcommand{\Ff}{{\mathcal F}}  %foliation

\newcommand{\ii}{{\rm i}}  %square root of -1
\newcommand{\jj}{{\rm j}}  %quaternion j
\newcommand{\eu}{{\rm e}}  %Euler's number 2.718...

\newcommand{\iI}{{\mathbf i}}  %index
\newcommand{\jJ}{{\mathbf j}}  %index
\newcommand{\rR}{{\mathbf r}}  %index
\newcommand{\kK}{{\mathbf k}}  %index
\newcommand{\lL}{{\mathbf l}}  %index
\newcommand{\sS}{{\mathbf s}}  %index

\newcommand{\SO }{\mbox{\rm SO}} %Special orthogonal group
\newcommand{\U}{\mbox{\rm U}}   %Unitary group
\newcommand{\Jj}{{\mathcal J}}  %almost cx str on twistor space
\newcommand{\JJ}{{\mathrm J}}  %Jacobi operator
\newcommand{\Tt}{{\mathcal T}}  %quadric Grassmannian
\newcommand{\SU}{\mbox{\rm SU}}  %special unitary group
\newcommand{\PU}{\mbox{\rm PU}}  %projective unitary group
\newcommand{\Sp}{\mbox{\rm Sp}}  %symplectic group
\newcommand{\SL}{\mbox{\rm SL}}  %special linear group

\newcommand{\ra}{\rightarrow}
\newcommand{\ds}{\displaystyle}

\newcommand{\inn}[1]{\langle#1\rangle} %inner product
\newcommand{\biginn}[1]{\big\langle#1\big\rangle} %\big version
\newcommand{\Biginn}[1]{\Big\langle#1\Big\rangle} %\Big version
\newcommand{\bigginn}[1]{\bigg\langle#1\bigg\rangle} %\bigg version
\newcommand{\Bigginn}[1]{\Bigg\langle#1\Bigg\rangle} %\Bigg version

\newcommand{\norm}[1]{\lvert#1\rvert} %norm
\newcommand{\bignorm}[1]{\big\lvert#1\big\rvert} %\big version
\newcommand{\Bignorm}[1]{\Big\lvert#1\Big\rvert} %\Big version
 
\newcommand{\hh}{\text{\rm Herm}}  %Hermitian extension
\newcommand{\cc}{c} %superscript for complex extns

\newcommand{\Harm}{\text{\rm Harm}}  %space of harmonic maps
\newcommand{\Hol}{\text{\rm Hol}}  %space of holomorphic maps
\newcommand{\HHol}{\text{\rm HHol}}  %space of horizontal hol maps
\newcommand{\ff}{\text{\rm full}}  %superscript for full maps
\newcommand{\nf}{\text{\rm non}} %superscript for nonfull maps
\newcommand{\hor}{\text{\rm hor}} %superscript for horizontal on \Pi

\newcommand{\End}{\text{\rm End}}  %Endomorphism bundle

\newcommand{\Ver}{\text{\rm Vert}} %Vertical maps

%Greek letters:

\renewcommand{\phi}{\varphi}
\newcommand{\la}{\lambda}
\newcommand{\La}{\Lambda}
\newcommand{\na}{\nabla}
\newcommand{\al}{\alpha}
\newcommand{\be}{\beta}
\newcommand{\ga}{\gamma}
\newcommand{\Ga}{\Gamma}
\newcommand{\ep}{\epsilon}
\newcommand{\ve}{\varepsilon}
\newcommand{\si}{\sigma}
\newcommand{\Si}{\Sigma}
\newcommand{\ze}{\zeta}
\newcommand{\om}{\omega}
\newcommand{\Om}{\Omega}

\newcommand{\wt}{\widetilde} %usually better than \tilde
\newcommand{\ov}{\overline} %usually better than \bar
\newcommand{\wh}{\widehat} %usually better than \hat
\newcommand{\un}{\underline} %for trivial vector bundles 

\renewcommand{\Re}{{\rm Re}\,} %real part of a complex number
\renewcommand{\Im}{{\rm Im}\,} %imaginary part of a complex number
\renewcommand{\div}{\mbox{\rm div}\,} %beware \div redefined!

\newcommand{\grad}{\mbox{\rm grad}\,}

\newcommand{\nablacirc}{\overset{\circ}{\na}} %Bott partial conn-Needs AMS

\hyphenation{Eu-clid-ean}

\newcommand{\Tr}{\mbox{\rm Trace}} %trace
\newcommand{\Id}{\mbox{\rm Id}} %identity
\newcommand{\spn}{\mbox{\rm span}} %span
\newcommand{\image}{\mbox{\rm Image}}  %image
\newcommand{\codim}{\mbox{\rm codim}}  %codim

\newcommand{\volnostyle}{\bf}

\title[Jacobi fields along harmonic 2-spheres in $S^3$ and $S^4$.]{Jacobi fields along harmonic 2-spheres in $S^3$ and $S^4$ are not all integrable. }

\author{Luc Lemaire}
\author{John C. Wood.}

\address{D\'epartement de Math\'ematique, Universit\'e libre de Bruxelles, C.P. 218,
Bd.\ du Triomphe, 1050 Bruxelles, Belgium}
\address{Department of Pure Mathematics, University of Leeds\\
Leeds LS2 9JT, Great Britain}
\email{llemaire@ulb.ac.be; j.c.wood@leeds.ac.uk}

\begin{abstract}In a previous paper, we showed that any Jacobi field along a harmonic map from the 2-sphere to the complex projective plane is integrable (i.e., is tangent to a
smooth variation through harmonic maps).  In this paper, in contrast, we show that there are (non-full) harmonic maps from the 2-sphere to the 3-sphere and 4-sphere which have non-integrable Jacobi fields.   This is particularly surprising in the case of the 3-sphere where the space of harmonic maps of any degree is a smooth manifold, each map having image in a totally geodesic 2-sphere.
\end{abstract}

\subjclass[2000]{Primary 58E20, Secondary 53C43}
\keywords{harmonic map; Jacobi field; infinitesimal deformation.}

\maketitle

\thispagestyle{empty}

\section*{Introduction}
A \emph{harmonic map} between smooth Riemannian manifolds is a smooth map which extremizes the energy functional --- a natural generalization of the Dirichlet integral.
A \emph{Jacobi field} along a harmonic map is an infinitesimal deformation of the harmonic map which preserves harmonicity `to first order'.  

It is important to know whether a Jacobi field is \emph{integrable}, i.e., tangent to a family of maps which are \emph{genuinely} harmonic --- not just to first order. 
We shall study this question for maps between compact manifolds.  Then, if all the Jacobi fields along the harmonic maps between two given real-analytic Riemannian manifolds are integrable, it follows that the space of harmonic maps between those manifolds is a finite-dim\-en\-sion\-al real-analytic manifold with tangent spaces given by the Jacobi fields \cite{AdSi}.  The case that we study here of \emph{maps from the $2$-sphere} to a Riemannian manifold $N$ has special interest; for instance, the integrability of Jacobi fields is related to the properties of the singular set of weakly harmonic maps \emph{from an arbitrary} Riemannian manifold to $N$ \cite{Sim}  (see \cite[p.\ 471]{LeWo2}).

Whether the Jacobi fields are integrable is known only for a handful of cases, see \cite{LeWo2}.   In the present paper, we consider the case of $2$-sphere to higher-dim\-en\-sion\-al spheres.  Recall that a smooth map from the $2$-sphere is harmonic if and only if it is a \emph{minimal branched immersion} in the sense of \cite{GuOsRo}.   The construction of all harmonic $2$-spheres in spheres  has been understood for a long time, see \cite{Ca1,Ca2,Ch1,Ch2,Ba, Br}, however, the only case where the integrability question was settled was for harmonic maps from the $2$-sphere to itself \cite{GuWh}.

Thinking of the codomain $2$-sphere as $\CP^1$, it is natural to consider next maps into the complex projective plane
$\CP^2$.  In this case, the authors \cite{LeWo2} showed that \emph{all Jacobi fields along harmonic $2$-spheres in
$\CP^2$ are integrable}, by analysing the construction \cite{EeWo} of all harmonic maps.

In contrast, in the present paper we show that, \emph{along some harmonic $2$-spheres in $S^3$ or $S^4$, there are non-integrable Jacobi fields}.  {}From the twistor theory for harmonic maps, we see that the space of harmonic maps $\phi:S^2 \to S^4$ of fixed twistor degree is an algebraic variety  \cite{Ve2}; at a non-smooth point $\phi$, there are non-integrable Jacobi fields.  When the twistor degree is at least three, the algebraic variety is the union of three components with transverse intersection; we call the maps in the intersections of those components \emph{collapse points}.  Such maps are not full, but are the limits of families of full maps; they are non-smooth points, and so have non-integrable Jacobi fields.  

The case of maps into $S^3$ follows from the $S^4$ case, but is more surprising since the space of harmonic $2$-spheres in $S^3$ of any fixed twistor degree is a smooth manifold.   At some points of this manifold, there are extra Jacobi fields not in its tangent space, and so non-integrable.

A main tool is to show that \emph{Jacobi fields along a full harmonic map $\phi:S^2 \to S^4$ can be lifted to deformations of its twistor lift} --- a non-trivial result in the presence of branch points --- this allow us to examine the Jacobi fields by lifting the problem to the holomorphic category.  For non-full maps, we may not be able to lift Jacobi fields; instead, we use a correspondence between Jacobi fields and eigenfunctions of a Schr\"odinger equation \cite{MoRo,EjKo-extra,Ej-min, Ej-bound,Ko}. 

The paper is arranged as follows.  In \S \ref{sec:prelims}, we recall the twistor construction of harmonic maps from $S^2$ to $S^4$ in terms of horizontal holomorphic maps from $S^2$ to $\CP^3$, and study the infinitesimal deformations of such holomorphic maps. 

In \S \ref{sec:main}, we show how the twistor construction gives diffeomorphisms between the spaces of holomorphic and harmonic maps (Theorem \ref{th:smooth}).  We then show that Jacobi fields can be lifted, thus giving bijections between the infinitesimal deformations of the harmonic and holomorphic data (Theorem \ref{th:Jacobi-lift}). 
We use this in \S \ref{sec:nonfull} to show the non-integrability of some Jacobi fields along certain non-full harmonic maps into $S^4$.   Then using the Schr\"odinger equation as above, we deduce the same property in the case of $S^3$.

For a harmonic map from a $2$-sphere, equivalently a minimal branched immersion, the (real) dimension of the space of Jacobi fields along the map is called its \emph{nullity (for the energy)}.  In \S 4, we relate this to the \emph{nullity for the area}, and compare our results with those of S. Montiel and F. Urbano for minimal \emph{immersions} of $S^2$ in $S^4$.

In \S \ref{sec:collapse}, we study the subvariety of collapse points, equivalently, the subvariety of maps admitting an extra eigenfunction, showing that it is non-empty for all twistor degrees $d \geq 3$, and finding it explicitly for the important case $3 \leq d \leq 5$.
We discuss the influence of branch points: we note first that, when $d=3$, collapse points occur precisely when the four branch points exhibit the most symmetry; we then analyse the case $d=4$ with branch points already exhibiting some symmetry, and find the further necessary and sufficient conditions to give collapse points.

For general facts on Jacobi fields along harmonic maps, see \cite{Wo-Rome, Sim}; some of the results of this paper were announced in \cite{Wo-Tenerife}.

The second author thanks Luis Fern\'andez, Bruno Ascenso Sim\~oes and Martin Svensson for lively discussions on this work, and the Belgian FNRS for financial support for part of this work.  This work is part of a programme started by a question posed by L.\ Simon during the first MSJ International Research Institute in Tohoku University, Sendai (1993).

%\pagebreak

\section{Preliminaries}\label{sec:prelims}

\subsection{Harmonic maps} \label{subsec:ha-map}

Harmonic maps are defined to be the solutions to a variational problem as follows (see, for example, \cite{EeSam,EeLe-Rpt1, EeLe-Sel,EeLe-Rpt2}).

Let $M = (M^m,g)$ and $N = (N^n,h)$ be compact smooth Riemannian manifolds without boundary of arbitrary (finite) dimensions\/ $m$ and $n$ respectively, and let  $\phi:(M,g) \to (N,h)$  be a smooth map between them.  Define the \emph{energy} of $\phi$ by
\begin{equation} \label{energy}
E(\phi) = \frac{1}{2} \int_M |\dd\phi|^2 \,\om_g
\end{equation}
where $\om_g$ is the volume  measure on $M$ defined by the metric $g$, and $|\dd\phi|$ is the Hilbert--Schmidt norm of $\dd\phi$ given at each point $x \in M$ by
\begin{equation}
|\dd\phi_x|^2 = \sum_{i=1}^m \inn{\dd\phi_x(e_i), \dd\phi_x(e_i)}
\end{equation}
for any orthonormal basis $\{e_i\}$ of $T_xM$.  Here, and throughout the paper, $\langle \cdot, \cdot \rangle$ denotes the inner product on the relevant bundle $E \to M$ induced from the metrics on $M$ and $N$ and $| \cdot |$ the corresponding norm given by $|v| = \sqrt{\langle v, v \rangle}$ \ $(v \in E)$.  Further, we shall use $\Ga(E)$ to denote the space of smooth sections of $E$.

By a \emph{smooth (one-parameter) variation $\Phi = \{\phi_t\}$ of $\phi$} we mean a smooth map
$\Phi:M \times (-\epsilon, \epsilon) \to N$, \ $\Phi(x,t) = \phi_t(x)$, where $\epsilon >0$ and
$\phi_0 = \phi$.   The \emph{variation vector field of\/ $\Phi = \{\phi_t\}$} is defined by $v = \pa\phi_t\!/\pa t \vert_{t=0}$\,, this is a \emph{vector field along $\phi$}, i.e., a section of the pull-back bundle $\phi^{-1}TN \to M$; we shall say that $v$ is \emph{tangent to the variation $\{\phi_t\}$}.

A smooth map $\phi:(M,g) \to (N,h)$ is called \emph{harmonic} if it is an \emph{extremal} of the energy integral,
 i.e., for all smooth one-parameter variations $\{\phi_t\}$ of $\phi$, the \emph{first variation} $\frac{\dd}{\dd t} E(\phi_t)\big\vert_{t=0}$ is zero.   We compute (see, for example, \cite{EeLe-Sel}):
\begin{equation} \label{1st-var}
\frac{\dd}{\dd t} E(\phi_t)\Big\vert_{t=0} = - \int_M \big\langle \tau(\phi), v \big\rangle \, \om_g \,,
\end{equation}
where $\tau(\phi) \in \Ga(\phi^{-1}TN)$ is the \emph{tension field of\/ $\phi$} defined by
\begin{multline*}
\tau(\phi) = \Tr\,\na \dd\phi =
	\sum_{i=1}^m \na \dd\phi(e_i, e_i) \\[-2ex]
	= \sum_{i=1}^m \bigl\{ \na^{\phi}_{e_i}\! \bigl(\dd\phi(e_i)\bigr) - \dd\phi(\na^M_{e_i}e_i) \bigr\} \,.
\end{multline*}
Here $\na^M$ denotes the Levi-Civita connection on $M$, $\na^{\phi}$ the pull-back of the Levi-Civita connection $\na^N$ on $N$ to the bundle $\phi^{-1}TN \to M$, and $\na$ the tensor product connection on the bundle $T^*M \otimes \phi^{-1}TN$ induced from these connections. 
Equation \eqref{1st-var} says that $\tau(\phi)$ is the negative of the gradient at $\phi$ of the energy functional $E$ on a suitable space of mappings, i.e., it points in the direction in which $E$ decreases most rapidly \cite[(3.5)]{EeLe-Rpt1}.  It follows from \eqref{1st-var} that $\phi$ is harmonic if and only if it satisfies the \emph{harmonicity equation}: $\tau(\phi) = 0$.

For any manifold $M$, $T^{\cc}M = TM \otimes \CC$ will denote the \emph{complexified tangent bundle}.  When $(M,J)$ is an almost complex manifold, this decomposes into the direct sum of the holomorphic (or $(1,0)$-) tangent bundle $T'M$ and the antiholomorphic (or $(0,1)$-) tangent bundle $T''M$; these being the $(+\ii)$- and $(-\ii)$-eigenspaces of $J$.   When $(M,J)$ is a complex manifold, these three bundles are holomorphic bundles, and, as is standard, we shall use the map $v \mapsto$ its $(1,0)$-part $v' = \frac{1}{2}(v - \ii Jv)$ to identify $TM$ with $T'M$, thus giving $TM$ a holomorphic structure under which the action of $J$ on $TM$ corresponds to multiplication by $\ii$ in $T'M$.

Now suppose that $(M^2,g)$ is a $2$-dim\-en\-sion\-al Riemannian manifold.  Then the energy \eqref{energy}, and so  harmonicity, of a map depend only on the conformal structure induced by $g$.  In fact, let $(x_1,x_2)$ be isothermal coordinates and write $z = x_1+ \ii x_2$\,.  Write $\pa/\pa z = \frac{1}{2} (\pa/\pa x_1 - \ii \pa/ \pa x_2)$ and $\pa/\pa\ov{z} = \frac{1}{2} (\pa/\pa x_1 + \ii \pa/ \pa x_2)$; the harmonicity equation can then be written (see, for example \cite[\S 3.5]{BaWo-book}):
\begin{equation} \label{harmonicity}
\na^{\phi}_{\frac{\pa}{\pa \bar z}}\,\frac{\pa \phi}{\pa z} = 0 \quad \text{or, equivalently,} \quad
		\na^{\phi}_{\frac{\pa}{\pa z}}\,\frac{\pa \phi}{\pa \bar z} = 0 \,.
\end{equation}

When $M^2$ is oriented, by taking charts consisting of \emph{oriented} isothermal coordinates $(x_1,x_2)$, we give $M^2$ the structure of a one-dim\-en\-sion\-al complex manifold, or \emph{Riemann surface} with complex coordinates $z = x_1 + \ii x_2$\,.  Then $\pa/\pa z$ and $\pa/\pa\ov{z}$ provide local bases for $T'M$ and $T''M$, respectively.   

For $n \in \{1,2,\ldots,\}$, let $S^n$ be the unit sphere in $\RR^{n+1}$ with the induced metric, and let $\CP^n$ be the $n$-dimensional complex projective space with its Fubini--Study metric of constant holomorphic sectional curvature.  We shall frequently identify the complex projective line $\CP^1$ with the $2$-sphere by the mapping
\begin{equation} \label{CP1S2}
[z_0,z_1] \mapsto
\frac{1}{|z_0|^2 + |z_1|^2} \bigl(|z_0|^2 - |z_1|^2, \, \ov{z_0} z_1 \bigr) \,;
\end{equation}
this is biholomorphic and an isometry up to scale; we further identify $\CP^1$ conformally (in fact, biholomorphically) with the extended complex plane $\Ci$ by the mapping
$[z_0,z_1] \mapsto z_0^{-1}z_1$.  The composition of these two identifications is the biholomorphic map given by \emph{stereographic projection} $\si:S^2 \to \Ci$:
\begin{equation} \label{stereo}
z = \si(x_1,x_2,x_3) = (x_2 + \ii x_3)\big/(1+x_1) \,.
\end{equation}

A map $\phi:M^2 \to N$ is called \emph{weakly conformal} if, away from points where $\dd\phi$ is zero, it preserves angles; in a local complex coordinate $z$ on $M^2$, this can be expressed by the equation
\begin{equation} \label{WC}
\Biginn{\frac{\pa\phi}{\pa z}, \frac{\pa\phi}{\pa z}}^{\!\!\cc} = 0 \,.
\end{equation} 
Here $\inn{\cdot, \cdot}^{\!\cc}$ denotes the complex-bilinear extension of the inner product $\inn{\cdot, \cdot}$ on $N$.  Note that \eqref{WC} says that $\pa\phi/\pa z$ is orthogonal to $\pa\phi/\pa\bar{z}$ with respect to the \emph{Hermitian extension} of the inner product:
\begin{equation} \label{Herm-extn}
\biginn{v, w}^{\!\hh} = \biginn{v, \ov{w}}^{\!\cc} \qquad (v, w \in T^{\cc}_xN, \ x \in N) \,.
\end{equation}

Note that the energy of a weakly conformal map is equal to its \emph{area}.  As is well-known, any harmonic map from the $2$-sphere is weakly conformal, indeed, the harmonic equation \eqref{harmonicity} shows that the quantity $\biginn{\frac{\pa\phi}{\pa z}, \frac{\pa\phi}{\pa z}}^{\!\!\cc}\dd z^2$ is a well-defined holomorphic differential on $S^2$, and so must vanish.

A \emph{minimal branched immersion} is a smooth map from a Riemann surface which is a conformal minimal immersion except at isolated points where it has branch points in the sense of \cite{GuOsRo}.
A non-constant weakly conformal map is harmonic if and only if it is a minimal branched immersion; in particular, \emph{a smooth map from $S^2$ is harmonic if and only if it is a minimal branched immersion}.

More generally, a smooth map $\phi:M^2 \to N$ is called \emph{real isotropic} if, in any local complex coordinate $z$, the quantities
$$
\eta_{r,s}(\phi) = \Biginn{ \bigl(\na^{\phi}_{\frac{\pa}{\pa z}}\bigr)^{r-1}  \Bigl(\frac{\pa\phi}{\pa z}\Bigr) \:,\, 
	\bigl(\na^{\phi}_{\frac{\pa}{\pa z}}\bigr)^{s-1}  \Bigl(\frac{\pa\phi}{\pa z}\Bigr) }^{\!\cc}
$$
are zero for all integers $r,s \geq 1$. 
Note that this condition is independent of the complex coordinate chosen, and putting $r=s=1$ shows that any real-isotropic map is weakly conformal.

Let $\phi:S^2 \to S^n$ be a harmonic map from the $2$-sphere to an $n$-sphere ($n \geq 2$). Then \cite{Ca1,Ca2}
$\phi$ is real isotropic.  Indeed, one shows by induction on $k=r+s$ that
$\eta_{r,s}(\phi)\dd z^k$ defines a \emph{holomorphic $k$-differential}, i.e., a holomorphic section of $\otimes^kT'_*S^2$, and so must vanish. 

Let $i:S^n \to \RR^{n+1}$ denote the standard isometric inclusion mapping.   For a smooth map $\phi:M^2 \to S^n$, write
\begin{equation} \label{varPhi}
\varPhi = i \circ \phi \,.
\end{equation}

Then (see, for example, \cite{EeLe-Rpt1, BaWo-book}), $\phi$ is harmonic if and only if

\begin{equation} \label{ha-sphere}
\frac{\pa^2\varPhi}{\pa\ov{z}\,\pa z} = - \Bignorm{\frac{\pa\varPhi}{{\pa z}}}^2\,\varPhi
\end{equation}
or, more invariantly,
\begin{equation} \label{ha-sphere0}
\Delta^M\varPhi = \norm{\dd\varPhi}^2 \varPhi
\end{equation}
where $\Delta^M$ is the Laplace-Beltrami operator on $(M,g)$ (conventions as in \cite{EeLe-Sel, Wo-Rome}).

The real isotropy of $\phi$ can be expressed in terms of $\varPhi$ by
\begin{equation} \label{isotropy}
\Biginn{\frac{\pa^r\varPhi}{\pa z^r}, \frac{\pa^s\varPhi}{\pa z^s}}^{\!\cc} = 0 \quad
		\text{for all } r,s \in \{0,1,2,\ldots\} \text{ with } r+s \geq 1.
\end{equation}
Note that \eqref{isotropy} holds automatically for $r+s = 1$, and coincides with the weak conformality condition \eqref{WC} for $(r,s) = (1,1)$.

The real isotropy allows us to construct all harmonic maps from $S^2$ to $S^n$ explicitly
from holomorphic data, see \cite{Ca1,Ca2,Ch1,Ch2,Ba,Br}; the case $n=4$ is rather special, we shall recall that construction below.

\subsection{Infinitesimal deformations of harmonic maps}

Let $\phi:(M,g) \to (N,h)$ be harmonic.  We can describe the
\emph{second variation} of the energy at $\phi$ as
follows.  Let $v, w \in \Ga(\phi^{-1}TN)$.  Choose a (smooth) two-parameter variation $\Phi =\{\phi_{t,s}\}$ of $\phi$  with
$$
\left.\frac{\pa\phi_{t,s}}{\pa t}\right\vert_{(t,s)=(0,0)} = v \quad \text{and} \quad
\left.\frac{\pa\phi_{t,s}}{\pa s}\right\vert_{(t,s)=(0,0)} = w \,.
$$
The \emph{Hessian of\/ $\phi$} is defined by 
\begin{equation} \label{Hessian1}
H_{\phi}(v,w) =
	\left.\frac{\pa^2 E(\phi_{t,s})}{\pa t\pa s} \right\vert_{(t,s)=(0,0)}.
\end{equation}  
This depends only on $v$ and $w$; indeed, it is given by the \emph{second variation formula} (see, for example, \cite{EeLe-Sel}):
\begin{equation} \label{Hessian2}
H_{\phi}(v,w) = \int_M \inn{\JJ_{\phi}(v), w} \,\om_g
\end{equation}
where
$$
\JJ_{\phi}(v) = \Delta^{\phi} v - \Tr\, R^N\!(\dd\phi, v)\,\dd\phi \,;
$$
here $\Delta^{\phi}$ denotes the Laplacian on $\phi^{-1}TN$ and $R^N$ the curvature
operator of $N$ (conventions as in \cite{EeLe-Sel,LeWo2,Wo-Tokyo,Wo-Rome}).
The mapping $\JJ_{\phi}: \Ga(\phi^{-1}TN) \to \Ga(\phi^{-1}TN)$ is called the
\emph{Jacobi operator} (for the energy); it is a self-adjoint
linear elliptic operator.   A vector field $v$ along $\phi$ is called a \emph{Jacobi field (along
$\phi$)} if it is in the kernel of the Jacobi operator, i.e., it
satisfies the Jacobi equation
\begin{equation} \label{Jacobi}
\JJ_{\phi}(v) = 0 \,.
\end{equation}
By standard elliptic theory, the set of Jacobi fields along a
harmonic map is a finite-dim\-en\-sion\-al vector subspace of $\Ga(\phi^{-1}TN)$.

We shall make use of the following interpretation of the Jacobi operator
as the \emph{linearization} of the tension field \cite{LeWo2}.

\begin{defn} Let
$\{\phi_t\}$ be a smooth 1-parameter family of maps from $(M,g)$ to $(N,h)$.
Say that\/ $\{\phi_t\}$ is \emph{harmonic to first
order} if its tension field is \emph{zero to first order} in the sense that (see the next footnote for the meaning of the derivative $\pa/\pa t$)  
\begin{equation} \label{first-order}
\tau(\phi_0) = 0 \quad \text{and} \quad \left. \frac{\pa}{\pa t}\tau(\phi_t)\right\vert_{t=0} = 0 \,.
\end{equation}
\end{defn}

We shall write the condition \eqref{first-order} succinctly as
\begin{equation} \label{order-t}
\tau(\phi_t) = o(t)\,.
\end{equation}

\begin{prop} \label{prop:first-order}
Let\/ $\phi:M \to N$ be harmonic and let\/ $v \in \Ga(\phi^{-1}TN)$. 
Let
$\Phi = \{\phi_t\}$ be a smooth variation of\/ $\phi$ tangent to $v$.  Then {\rm (}\footnote{Here, and in \cite{LeWo2,Wo-Tokyo}, this means that the components of each side with respect to a local frame on $N$ satisfy $\JJ_{\phi}(v)^{\al} = - (\pa/\pa t)\tau(\phi_t)^{\al}\vert_{t=0}$\,.  Alternatively $\pa/\pa t$ in \eqref{Jacobi-interp} and \eqref{first-order} can be replaced by the covariant derivative $\na^{\Phi}_{\pa/\pa t}$ in the pull-back bundle $\Phi^{-1}TN$.  The resulting expressions are all equal since $\tau(\phi) = 0$.}{\rm )}
\begin{equation} \label{Jacobi-interp}
\JJ_{\phi}(v) = - \left. \frac{\pa}{\pa t}\tau(\phi_t) \right\vert_{t=0} \,.
\end{equation}

In particular, $v$ is a Jacobi field along $\phi$ if and only if\/ $\{\phi_t\}$ is harmonic to first order: $\tau(\phi_t) = o(t)$.
\qed \end{prop}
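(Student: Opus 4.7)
The plan is to deduce the identity \eqref{Jacobi-interp} by testing the would-be equality against an arbitrary section $w \in \Ga(\phi^{-1}TN)$ and comparing two expressions for the mixed second derivative of the energy along a two-parameter variation. Part (ii) of the proposition (the ``in particular'' statement) then follows immediately from (i) together with the definition of a Jacobi field as a kernel element of $\JJ_\phi$.

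More precisely, given $v$ and the variation $\Phi = \{\phi_t\}$ tangent to $v$, extend $\Phi$ to a smooth two-parameter variation $\phi_{t,s}$ with $\phi_{t,0} = \phi_t$ for all $t$ and with $\pa\phi_{t,s}/\pa s\big|_{(t,s)=(0,0)} = w$ for a given $w \in \Ga(\phi^{-1}TN)$ (for instance, by exponentiating a fixed extension of $w$ to a vector field along $\phi_t$ obtained by parallel transport along the $t$-curves). Applying the first variation formula \eqref{1st-var} at each fixed $t$ to the one-parameter family $s \mapsto \phi_{t,s}$ yields
\begin{equation*}
\frac{\pa}{\pa s}E(\phi_{t,s})\Big|_{s=0}
  = -\int_M \Bigl\langle \tau(\phi_t),\, \frac{\pa\phi_{t,s}}{\pa s}\Big|_{s=0}\Bigr\rangle \, \om_g \,.
\end{equation*}

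Now differentiate this identity with respect to $t$ at $t=0$. On the left one obtains, by definition \eqref{Hessian1}, the Hessian $H_\phi(v,w)$, which by the second variation formula \eqref{Hessian2} equals $\int_M \langle \JJ_\phi(v), w\rangle \om_g$. On the right, the product rule produces two terms; the term involving $\tau(\phi_0) = \tau(\phi)$ vanishes because $\phi$ is harmonic, and the remaining term is exactly $-\int_M \bigl\langle \pa\tau(\phi_t)/\pa t\vert_{t=0},\, w\bigr\rangle \om_g$. (It is precisely here that the interpretation of $\pa/\pa t$ matters, but as noted in the footnote, the ambiguity is irrelevant because the ``correction'' terms are all multiplied by $\tau(\phi_0) = 0$.) Equating the two expressions gives
\begin{equation*}
\int_M \Bigl\langle \JJ_\phi(v) + \frac{\pa\tau(\phi_t)}{\pa t}\Big|_{t=0},\, w \Bigr\rangle \om_g = 0
\end{equation*}
for every $w$, and the fundamental lemma of the calculus of variations yields \eqref{Jacobi-interp}.

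The main (and really only) obstacle is the bookkeeping in the $t$-derivative on the right-hand side: $\tau(\phi_t)$ is a section of $\phi_t^{-1}TN$, so its $t$-derivative must be interpreted as a section of $\phi^{-1}TN$ via either the pull-back connection $\na^\Phi_{\pa/\pa t}$ in $\Phi^{-1}TN$ or component-wise with respect to a local frame on $N$, and one must check that the boundary term produced by differentiating the inner product cancels precisely because $\phi$ is harmonic. Once this is dispatched, everything reduces to the identification of $\JJ_\phi$ as the $L^2$-gradient of the Hessian quadratic form, which is already encoded in \eqref{Hessian2}.
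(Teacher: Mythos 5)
The paper states this proposition with a bare \qed, citing \cite{LeWo2} rather than giving its own argument; there is therefore no in-paper proof to compare against. That said, your argument is correct, and it is worth noting that it is the ``weak'' (dual) approach rather than the direct computation one usually finds in the references: instead of explicitly computing $\na^{\Phi}_{\pa/\pa t}\tau(\phi_t)\vert_{t=0}$ from the definition of $\tau$ and matching the resulting terms (a commuted covariant derivative plus a curvature term) against the formula for $\JJ_{\phi}$, you test the desired identity against an arbitrary $w$, differentiate the first variation formula in the second parameter, and invoke the second variation formula \eqref{Hessian2}. This is clean and makes transparent exactly where harmonicity of $\phi$ enters (killing the term $\langle \tau(\phi_0), \na^{\Phi}_{\pa/\pa t} w_t\rangle$), and it also makes the footnote's ambiguity about the meaning of $\pa/\pa t$ manifestly harmless. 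One caveat worth being aware of: in some presentations the second variation formula \eqref{Hessian2} is itself \emph{derived} by differentiating the first variation formula and then explicitly linearizing $\tau$; if you are working from such a source, your argument is essentially unwinding that derivation rather than adding new content, so you should make sure \eqref{Hessian2} is available to you as a black box (as it is in this paper). As a small technical point, to justify differentiating under the integral and applying the product rule you should be explicit that $w$ has been extended to a smooth section of $\Phi^{-1}TN$ over all of $M \times (-\ep,\ep)^2$; you indicate this with the parallel-transport/exponential-map construction, which suffices.
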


In particular, if $\{\phi_t\}$ is a smooth variation of $\phi$ through harmonic
maps, its variation vector field
$\ds v = \pa\phi_t\big/\pa t\vert_{t=0}$ is a Jacobi field.
This suggests the following definition.
\begin{defn} \label{def:integrable}
A Jacobi field $v$ along a harmonic map $\phi:M \to N$ is called \emph{integrable} if it is tangent to a
smooth variation $\{\phi_t\}$ of $\phi$ through harmonic maps, i.e., there exists a one-parameter family $\{\phi_t\}$ of harmonic maps with $\phi_0 = \phi$ and $\pa\phi_t\big/\pa t\vert_{t=0} = v$.
\end{defn}

When all Jacobi fields are integrable, many consequences follow, see \cite{LeWo2}; here we quote only one.

\begin{prop}\label{prop:AdamsSimon} \cite{AdSi}
Let $\phi_0:(M,g) \to (N,h)$ be a harmonic map between
real-analytic manifolds.   Then all Jacobi fields along
$\varphi_0$ are integrable if and only if the space of harmonic maps
($C^{2,\alpha}$-)close to $\varphi_0$ is a
smooth manifold whose tangent space at $\varphi_0$ is exactly the space\/
$\ker \JJ_{\varphi_0}$ of Jacobi fields along it.
\qed \end{prop}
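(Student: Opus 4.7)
The plan is to realise the tension field as a real-analytic Fredholm map between Banach spaces and then apply a Lyapunov--Schmidt reduction. Fix $\al \in (0,1)$ and use the (fibrewise) exponential map to identify a $C^{2,\al}$-neighbourhood of $\phi_0$ in the space of maps $M \to N$ with a neighbourhood $\mathcal{U}$ of $0$ in the Banach space $C^{2,\al}(\phi_0^{-1}TN)$, via $v \mapsto \exp_{\phi_0} v$. Since $M$ and $N$ are real-analytic and $\phi_0$ is then automatically real-analytic by elliptic regularity, the map
\begin{equation*}
F:\mathcal{U}\to C^{0,\al}(\phi_0^{-1}TN), \qquad F(v) = \tau(\exp_{\phi_0}v),
\end{equation*}
is a real-analytic Banach-space map with $F(0)=0$ and, by Proposition~\ref{prop:first-order}, differential $dF_0 = -\JJ_{\phi_0}$.

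Since $\JJ_{\phi_0}$ is a self-adjoint elliptic operator, it is Fredholm of index zero, $K = \ker\JJ_{\phi_0}$ is finite-dim\-en\-sion\-al, and the $L^2$-orthogonal complement $K^\perp$ splits both source and target, with $\JJ_{\phi_0}$ restricting to a Banach space isomorphism on $K^\perp$. Writing $v = v_K + v_\perp$ and $F = F_K + F_\perp$, the analytic implicit function theorem solves $F_\perp(v_K+v_\perp)=0$ uniquely by a real-analytic map $v_\perp = \psi(v_K)$ with $\psi(0)=0$ and $d\psi_0 = 0$. The nearby harmonic maps then correspond bijectively, via $v_K \mapsto \exp_{\phi_0}(v_K+\psi(v_K))$, to the zeros in a neighbourhood of $0 \in K$ of the real-analytic \emph{obstruction map}
\begin{equation*}
f:U\subset K \to K, \qquad f(v_K) = F_K\bigl(v_K+\psi(v_K)\bigr).
\end{equation*}

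Both implications now follow cleanly. $(\Leftarrow)$ If the nearby harmonic maps form a smooth manifold with tangent space $K$ at $\phi_0$, any $v\in K$ is the initial velocity of a smooth curve in that manifold, so $v$ is integrable. $(\Rightarrow)$ Conversely, suppose every $v \in K$ is integrable by $\phi_t = \exp_{\phi_0}v(t)$. Then $F(v(t))\equiv 0$, so $v_\perp(t)=\psi(v_K(t))$ and $v_K(t)$ is an analytic curve in $f^{-1}(0)\subset K$ with $v_K(0)=0$ and $v_K'(0)=v$. Expand $f = \sum_{k\geq k_0}f_k$ as a convergent series of homogeneous polynomial maps with lowest nonzero term of degree $k_0$; note $df_0=0$ (since $-dF_0 = \JJ_{\phi_0}$ vanishes on $K$), so $k_0 \geq 2$. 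Substituting $v_K(t)=tv+O(t^2)$ gives $f(v_K(t)) = t^{k_0}f_{k_0}(v) + O(t^{k_0+1})\equiv 0$, hence $f_{k_0}(v)=0$ for every $v\in K$, contradicting $f_{k_0}\not\equiv 0$. Therefore $f \equiv 0$ near $0$, and $\{\exp_{\phi_0}(v_K+\psi(v_K)):v_K\in U\}$ is a (real-analytic) submanifold of dimension $\dim K$ with tangent space $K$ at $\phi_0$.

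The main difficulty is concentrated in the last step of the $(\Rightarrow)$ direction. The hypothesis supplies only a \emph{separate} one-parameter family for each direction $v \in K$, not a single multiparameter family, so there is no a priori manifold of harmonic maps to differentiate. Converting this purely directional information into the identical vanishing of $f$ near $0$ relies essentially on real-analyticity (so that $f$ admits a convergent homogeneous expansion, and vanishing along every analytic line through the origin forces the lowest-order term to vanish) and on the finite-dim\-en\-sion\-ality of $K$ (so that this pointwise vanishing is genuinely vanishing of $f_{k_0}$ as a polynomial map).
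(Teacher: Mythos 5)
Your Lyapunov--Schmidt reduction is exactly the argument of Adams--Simon \cite{AdSi}, which the paper cites for this proposition without giving its own proof, and it is correct: the finite-dimensional real-analytic obstruction map $f:U\subset K\to K$, the observation that integrability of each $v\in K$ forces the lowest homogeneous piece $f_{k_0}$ to vanish on all of $K$ and hence identically, and the resulting identification of the nearby harmonic maps with the graph of $\psi$ over $K$ are precisely the content of their theorem. A small remark: in the $(\Rightarrow)$ step the integrating family is only assumed smooth, not analytic, but this does not matter, since $f$ itself is real-analytic and $v_K(t)=tv+O(t^2)$ already yields $f(v_K(t))=t^{k_0}f_{k_0}(v)+o(t^{k_0})$, which is all the argument needs.
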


A vector field $v$ along a conformal map $\phi:M^2 \to N$ from a surface is called \emph{conformal} if \eqref{WC} is satisfied to first order for any one-parameter variation of $\phi$ tangent to $v$.   Conformality of harmonic maps from a $2$-sphere is preserved to first order by Jacobi fields, see \cite[\S 3.1]{Wo-Tokyo}.  Further, for harmonic maps into spheres, \emph{isotropy} \eqref{isotropy} is preserved to first order, as in the following result, which is equivalent to \cite[Proposition 3.4]{Wo-Tokyo}.

\begin{prop} \label{prop:isotropy-1st}
Let $\phi:S^2 \to S^n$ be a harmonic map, and let $v$ be a Jacobi field along it.  Then $v$ preserves isotropy to first order in the sense that, if\/ $\{\phi_t\}$ is any one-parameter variation of\/ $\phi$ tangent to $v$, then, writing\/ $\varPhi_t =  i \circ \phi_t$ where $i:S^n \to \RR^{n+1}$ is the standard inclusion, we have
\begin{equation} \label{isotropy-1st}
\Biginn{\frac{\pa^r\varPhi_t}{\pa z^r}, \frac{\pa^s\varPhi_t}{\pa z^s}}^{\!\!\cc} = o(t)
\end{equation}
for all integers $r,s \geq 0$ with $r+s \geq 1$.
\qed \end{prop}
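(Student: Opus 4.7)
The plan is to adapt the classical argument of Calabi and Chern (\cite{Ca1,Ca2,Ch1,Ch2}), which shows that every harmonic map $\phi:S^2\to S^n$ is real isotropic by proving inductively that $\eta_{r,s}(\phi)\,dz^{r+s}$ defines a global holomorphic section of $\otimes^{r+s}T'^*S^2$ and hence vanishes (the bundle having degree $-2(r+s)<0$). I will mimic that argument applied to the first-order $t$-derivative of $\eta_{r,s}(\phi_t)$ along a variation tangent to $v$.

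Fix a smooth variation $\{\phi_t\}$ tangent to $v$, write $\varPhi_t = i\circ\phi_t$, and set
$$
f^{(r,s)}_t = \Biginn{\frac{\pa^r\varPhi_t}{\pa z^r},\,\frac{\pa^s\varPhi_t}{\pa z^s}}^{\!\!\cc},\qquad
g^{(r,s)} = \left.\frac{\pa f^{(r,s)}_t}{\pa t}\right|_{t=0}.
$$
Real isotropy of $\phi_0$ gives $f^{(r,s)}_0\equiv 0$, so \eqref{isotropy-1st} is equivalent to $g^{(r,s)}\equiv 0$ for every $r,s$ with $r+s\ge 1$. I prove this by induction on $k=r+s$. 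The inductive assertion is that $g^{(r',s')}\equiv 0$ for all $r'+s'<k$; I then show that $\pa g^{(r,s)}/\pa\bar z = 0$ in any local coordinate $z$, whereupon $g^{(r,s)}\,dz^k$ is a holomorphic section of $\otimes^k T'^*S^2$ of negative degree and must vanish.

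To compute $\pa f^{(r,s)}_t/\pa\bar z$, differentiate inside the pairing and commute $\pa/\pa\bar z$ past the holomorphic derivatives using the commutativity of partial derivatives; each resulting factor of the form $\pa^{j+1}\varPhi_t/\pa\bar z\,\pa z^j$ is rewritten by iterating the identity that comes from \eqref{ha-sphere}, corrected for the fact that $\phi_t$ need not be harmonic: $\pa^2\varPhi_t/\pa\bar z\,\pa z$ equals a scalar multiple of $\tau(\phi_t)$ plus a multiple of $\varPhi_t$. Since $v$ is a Jacobi field, Proposition~\ref{prop:first-order} gives $\tau(\phi_t)=o(t)$, so all tension contributions are $o(t)$. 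The remaining terms pair $\varPhi_t$ (or its $z$-derivatives, multiplied by scalar factors built from $|\pa\varPhi_t/\pa z|^2$) with $\pa^s\varPhi_t/\pa z^s$, producing either lower-order $f^{(r',s')}_t$'s with $r'+s'<k$, or expressions $\inn{\varPhi_t,\pa^{s'}\varPhi_t/\pa z^{s'}}^{\!\cc}$ which, via repeated differentiation of the constraint $|\varPhi_t|^2=1$, reduce again to lower-order $f^{(r',s')}_t$'s. Taking $\pa/\pa t$ at $t=0$ and invoking the inductive hypothesis yields $\pa g^{(r,s)}/\pa\bar z=0$, completing the induction.

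The main obstacle will be the combinatorial bookkeeping in the inductive step: one must verify that \emph{every} term produced by commuting $\pa/\pa\bar z$ past the $r$ (resp.\ $s$) holomorphic derivatives, and then substituting via \eqref{ha-sphere}, is either $o(t)$ (thanks to $\tau(\phi_t)=o(t)$) or reducible to $f^{(r',s')}_t$'s of strictly smaller total order. This is precisely the structure underlying Calabi's original holomorphicity argument; the only new feature is an additional $o(t)$ error introduced at each step because $\phi_t$ is only harmonic to first order rather than exactly harmonic, and this error does not affect the conclusion that the first-order coefficient $g^{(r,s)}$ is holomorphic.
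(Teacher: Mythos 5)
The paper does not prove this proposition; it is stated with a \qed\ symbol and attributed to \cite[Proposition 3.4]{Wo-Tokyo}, so there is no in-paper proof to compare against. Your proposal is a sensible reconstruction of the natural argument — linearize Calabi's inductive holomorphicity proof of real isotropy — and the mechanism you describe does go through: splitting $\pa^{2}\varPhi_t/\pa\bar z\,\pa z$ into its normal part $-\lvert\pa\varPhi_t/\pa z\rvert^2\,\varPhi_t$ plus a tension term, using $\tau(\phi_t)=o(t)$ from Proposition~\ref{prop:first-order} to discard the latter, and reducing the remaining pairings to $f^{(r',s')}_t$ with $r'+s'<k$ which are $o(t)$ by the inductive hypothesis, so that $\pa g^{(r,s)}/\pa\bar z=0$.

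There is one point you pass over too quickly, and it is worth making explicit because it is the step where the induction hypothesis actually does double duty. You assert that $g^{(r,s)}\,\dd z^k$ is a holomorphic section of $\otimes^k T'_*S^2$, but a priori $g^{(r,s)}$ does not transform as a $k$-differential: under a change of holomorphic coordinate $z\mapsto w$, $\pa^r\varPhi_t/\pa z^r$ is a combination of $\pa^j\varPhi_t/\pa w^j$ for $1\le j\le r$ with leading coefficient $(\pa w/\pa z)^r$, so $f^{(r,s)}_t$ picks up lower-order $f^{(j,l)}_t$ terms with $j+l<r+s$. You must therefore invoke the inductive hypothesis a second time — $g^{(j,l)}\equiv 0$ for $j+l<k$ — to see that these correction terms vanish at first order, and only then does $g^{(r,s)}$ transform with the factor $(\pa w/\pa z)^{r+s}$ and define a genuine global section. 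Together with the observation that the base case $k=1$ (namely $f^{(1,0)}_t\equiv f^{(0,1)}_t\equiv 0$, from $\lvert\varPhi_t\rvert^2\equiv 1$) needs no holomorphicity at all, this closes the argument. With that clarification the proof is sound.
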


\subsection{The twistor space of the $4$-sphere} \label{subsec:twistor-S4}

We recall the well-known construction of the twistor space of the $4$-sphere, and its identification with $\CP^3$ and $\SO(5)/\U(2)$.  The seminal article on this is \cite{AtHiSi}; our account is based on that in \cite[Chapter 7]{BaWo-book}.

Let $M = (M^{2m},g)$ be an oriented Riemannian manifold of even dimension $2m$.  Let $\SO(M) \to M$ denote the bundle whose fibre at $x \in M$ is the set $\SO(T_xM)$ of orthonormal oriented frames at $x$.  Note that $\SO(2m)$ and its subgroup $\U(m)$ act on this set.   Let $x \in M$.  By an \emph{almost complex structure at $x$} (or \emph{on} $T_xM$) we mean a linear
transformation $J_x: T_x M \to T_x M$ such that ${J_x}^2 = -\Id_{T_xM}$; if it is isometric, we call it an \emph{almost Hermitian structure at $x$}.  Given an orthonormal basis $\{e_1, \ldots, e_{2m}\}$ of $T_x M$, setting 
\begin{equation} \label{Jx}
J_x e_{2j-1} = e_{2j}\,, \quad J_x e_{2j} = -e_{2j-1} \quad (j=1, \ldots, m)
\end{equation} 
defines an almost Hermitian structure $J_x$ at $x$ which we call \emph{positive} (respectively, \emph{negative})
according as $\{e_1, \ldots, e_{2m}\}$ is positively (respectively, negatively) oriented.  This defines a map from the set $\SO(T_x M)$ of positively oriented orthonormal frames at $x$ to the set $\Si^+_x = \Si^+_x(M)$ of positive almost Hermitian structures at $x$; that map factors to a bijection $\SO(T_x M)/\U(m) \to \Si^+_x$ which endows $\Si^+_x$ with the structure of a Hermitian symmetric space.
Further, on fixing an orthonormal basis for $T_x M$, we have an isomorphism $\SO(T_x M) \cong \SO(2m)$ which induces an isomorphism of $\Si^+_x$ with $\SO(2m)/\U(m)$\,.

Let $(V, \langle \, \cdot \, \rangle)$ be an inner product space such as (i) $\RR^n$ with its standard inner product or (ii) $T_xM$ with the inner product given by the metric.  Call a subspace $P$ of $V^{\cc} = V \otimes \CC$ \emph{isotropic} if $\inn{v,w}^c = 0$ for all $v,w \in P$.
A subspace $P$ of $T_x^{\cc}M$ is isotropic if and only if it is orthogonal to its complex conjugate $\ov{P}$, with respect to the Hermitian inner product \eqref{Herm-extn}.  Then there is a one-to-one correspondence between the set of all Hermitian structures $J_x$ at $x$ and the set of all $m$-dimensional isotropic subspaces $P$ given by setting $P$ equal to the $(0,1)$-tangent space given by the $(-\ii)$-eigenspace of $J_x$;   we call $P$  \emph{positive}  (respectively, \emph{negative}) according as $J_x$ is positive (respectively, negative).  More explicitly, given an orthonormal basis $\{e_1, \ldots, e_{2m}\}$ of $T_x M$, if $J_x$ is given by \eqref{Jx}, then $P$ is the complex subspace of $T^{\cc}_xM$ spanned by $\{e_1 + \ii e_2\,,\, e_3 + \ii e_4\,,\, \ldots,\, e_{2m-1} + \ii e_{2m}\}$.

%and the map $\{e_1, \ldots, e_{2m}\} \mapsto P$ induces a
%bijection from $\SO(T_x M)/\U(m)$ to the space of positive isotropic subspaces.  

Let $\SO(M) \to M$ be the principal bundle of positive orthonormal frames with fibre $\SO(T_xM)$ at $x \in M$.
Then the \emph{(positive) twistor bundle of\/ $M$} is the associated fibre bundle
\begin{equation} \label{twistor-proj}
\begin{array}{rcl}\pi:\Si^+ = \Si^+(M) &= & \SO(M) \times_{\mathrm{SO}(2m)} \SO(2m)/\U(m) \\ 
		&\cong & \SO(M)/\U(m) \longrightarrow M \,. \end{array}
\end{equation}
The fibre of $\pi$ at $x$ is the set $\Si^+_x$ of positive almost Hermitian structures at $x$.
The map $\pi$ is called the \emph{twistor map} or \emph{twistor projection} and its total space $\Si^+$ is called the \emph{(positive) twistor space of\/ $(M,g)$}.
The manifold $\Si^+$ has a canonical almost complex structure obtained as follows.  First, each fibre $\Si^+_x$ of $\pi$ has a complex structure $\Jj^{\Vv}$; indeed, it has the structure of a Hermitian symmetric space as described above.  Call the bundle of tangents to the fibres the
\emph{vertical subbundle} $\Vv(\Si^+)$; then the Levi-Civita connection $\na^M$ of $(M,g)$ defines a complementary subbundle $\Hh(\Si^+)$ of $T\Si^+$, called the
\emph{horizontal subbundle}.  Thus, we have a decomposition
\begin{equation}  \label{HV-Si}
T\Si^+ = \Vv(\Si^+) \oplus \Hh(\Si^+) \,;
\end{equation}
we shall denote the associated projections by the same letters, viz., $\Vv:T\Si^+ \to \Vv(\Si^+)$
and $\Hh:T\Si^+ \to \Hh(\Si^+)$.  Each $w \in \Si^+$ defines an almost complex structure on $T_{\pi(w)}M$; we use the isomorphism defined by the differential $\dd\pi_w|_{\Hh_w(\Si^+)}:\Hh_w(\Si^+) \to T_{\pi(w)}M$ to lift this to an almost complex structure $\Jj^{\Hh}_w$ on $\Hh_w(\Si^+)$.  Then the formula
\begin{equation} \label{twistor:J1}
\Jj_w = (\Jj^{\Vv}_w \,,\,  \Jj^{\Hh}_w) \qquad (w \in \Si^+)
\end{equation}
defines an almost complex structure $\Jj$ on $\Si^+$.  

Later, we shall need to complexify the decomposition \eqref{HV-Si} to a decomposition
\begin{equation} \label{HV-Si-C}
T^{\cc}\Si^+ = \Vv^{\cc}(\Si^+) \oplus \Hh^{\cc}(\Si^+) \,;
\end{equation}
we continue to denote the associated projections by $\Vv$ and $\Hh$.  This last decomposition restricts to a decomposition
\begin{equation} \label{HV-Si'}
T'\Si^+ = \Vv'(\Si^+) \oplus \Hh'(\Si^+)
\end{equation} 
 of the $(1,0)$-tangent bundle; we denote the associated projections by $\Vv'$ and $\Hh'$.
 
We now identify the twistor space $\Si^+(S^4)$ of $S^4$ and corresponding twistor projection
\begin{equation} \label{twistor-S4}
\pi:\Si^+(S^4) \to S^4.
\end{equation}
We use the following general theory for the twistor space of an oriented $4$-dim\-en\-sion\-al Riemannian manifold $(M^4,g)$.  Let $\pi:Z^6 \to M^4$ be a Riemannian submersion from a K\"ahler manifold $(Z^6,G,J)$.  Suppose that $\pi$ has totally geodesic fibres which are connected compact complex submanifolds.  Then we have a direct sum decomposition of bundles over $Z^6$:
\begin{equation} \label{Z6:orthog}
TZ^6 = \Vv(Z^6) \oplus \Hh(Z^6) \,,
\end{equation}
where $\Vv(Z^6)$ is the bundle of tangents to the fibres of $\pi$ and $\Hh(Z^6)$ is its orthogonal complement with respect to the metric $G$.    Then, for each $w \in Z^6$,  $J_w$ restricts to an endomorphism of $\Hh_w(Z^6)$; we use the isomorphism $\dd\pi_w|_{\Hh_w(Z^6)}$ to transfer this to an almost Hermitian structure $\iota(w)$ on $T_{\pi(w)}M^4$, thus defining a bundle map $\iota:Z^6 \to \Si^+(M^4)$.

The \emph{integrability tensor of\/ $\Hh$} is defined by
\begin{equation} \label{int-tensor}
I(X,Y) = \Vv\bigl([X,Y]\bigr) \qquad  (X,Y \in \Ga(\Hh(Z^6))\,) \,.
\end{equation}
Say that $\Hh(Z^6)$ is \emph{nowhere integrable} if this is non-zero at all points. 
Then (see, for example, \cite[\S 7.2]{BaWo-book}), 

\begin{prop} \label{prop:Kahler-twistor-sp}
{\rm (i)} The map $\iota:(Z^6,J) \to (\Si^+(M^4),\Jj)$ is holomorphic and maps $\Hh(Z^6)$ to $\Hh(\Si^+)$.

{\rm (ii)} If\/ $\Hh(Z^6)$ is nowhere integrable, then $\iota$ is a bundle isomorphism.
\qed \end{prop}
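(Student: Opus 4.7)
The plan is to handle (i) directly from the Kähler property of $Z^6$ together with the Riemannian-submersion and totally-geodesic-fibre hypotheses, and then to deduce (ii) by computing the vertical differential of $\iota$ and identifying its defect with the integrability tensor $I$. Since $\iota$ covers the identity on $M^4$, it automatically sends fibres to fibres, so $\dd\iota$ maps $\Vv(Z^6)$ into $\Vv(\Si^+)$. To prove that $\dd\iota(\Hh(Z^6))\subset\Hh(\Si^+)$, I would take a horizontal curve $\gamma(t)$ in $Z^6$ and show that the almost Hermitian structure $\iota(\gamma(t))$ is Levi-Civita parallel along $\pi_Z\circ\gamma$, which is the defining condition for $\iota\circ\gamma$ to be horizontal in $\Si^+$. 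Two ingredients combine: the Kähler identity $\na^Z J=0$, which says $J$ is preserved by $\na^Z$-parallel transport along $\gamma$; and the standard fact that, for a Riemannian submersion with totally geodesic fibres, parallel transport along a horizontal curve preserves the splitting $TZ^6=\Vv\oplus\Hh$ and restricts on $\Hh$ to Levi-Civita parallel transport in $M^4$. Together these give the required parallelism of $J|_\Hh$ along $\pi_Z\circ\gamma$. Holomorphicity of $\iota$ then splits into two checks: on $\Hh(Z^6)$, by construction both $J|_{\Hh_w}$ and $\Jj^\Hh_{\iota(w)}$ push forward, via the respective twistor projections, to the same almost Hermitian structure $\iota(w)$ on $T_{\pi(w)}M^4$, so $\dd\iota$ intertwines them; on each fibre, $\iota$ is a holomorphic map between a complex submanifold of $(Z^6,J)$ and a fibre of $\Si^+$ with its Hermitian-symmetric complex structure, which I would verify in adapted local coordinates.

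For part (ii), given (i), the restriction of $\dd\iota$ to horizontal subspaces is automatically a linear isomorphism, since $\Hh_w(Z^6)$ and $\Hh_{\iota(w)}(\Si^+)$ are both identified with $T_{\pi(w)}M^4$ via the differentials of the respective twistor projections, which $\iota$ intertwines. So it suffices to show that $\dd\iota$ is injective on $\Vv_w(Z^6)$ at every point $w$. I would argue pointwise: if $\Hh(Z^6)$ is non-integrable at $w$, then there exist horizontal vector fields $X,Y$ near $w$ with $I(X,Y)(w)=\Vv([X,Y])(w)\neq 0$, and by comparing the vertical component of $[X,Y]$ under $\dd\iota$ on both sides one finds, using $\na^Z J=0$ and the definition of $\Jj$, that $\dd\iota|_{\Vv_w}$ cannot vanish; a dimension count then shows it is in fact bijective. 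Once $\dd\iota$ is everywhere an isomorphism, $\iota$ is a local diffeomorphism between connected compact $6$-manifolds, and its restriction to each fibre is an unbranched holomorphic covering onto a fibre of $\Si^+(M^4)$; since those target fibres are $\CP^1$'s (hence simply connected) in the cases of interest, this covering is a diffeomorphism, and combined with fibre-preservation and the identity action on $M^4$ this makes $\iota$ a bundle isomorphism.

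The main obstacle is the explicit pointwise computation in (ii) linking $\dd\iota|_\Vv$ to the integrability tensor $I$. This calls for a careful choice of adapted orthonormal frame on a neighbourhood in $Z^6$ in which both $J$ and the horizontal distribution take standard form, so that the infinitesimal rotation of $\iota(w)$ along a vertical direction can be compared directly with the vertical component of a bracket of horizontal lifts. The parallel-transport argument for (i) and the passage from local diffeomorphism to bundle isomorphism in (ii) are routine once this core identity is in place.
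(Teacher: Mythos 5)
Your strategy---establishing (i) via parallelism of $\iota(\gamma(t))$ along horizontal curves and (ii) via the vertical differential of $\iota$ and the integrability tensor---is the right framework, but two steps as written do not work. In (i) you invoke the ``standard fact'' that, for a Riemannian submersion with totally geodesic fibres, Levi-Civita parallel transport along a horizontal curve preserves the splitting $TZ^6=\Vv\oplus\Hh$. This is false precisely in the regime of interest: for horizontal $X$ and basic horizontal $W$, the vertical component of $\na^Z_XW$ is O'Neill's tensor $\Aa_XW$, and ``nowhere integrable'' means exactly that this is nonvanishing, so $\na^Z$-parallel transport does \emph{not} preserve $\Hh$. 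The argument must instead note that the error term $\Aa_XW$ is vertical, that $J$ maps $\Vv$ to $\Vv$ (fibres are complex submanifolds), and hence that this error is annihilated when one takes the horizontal component of $J\na^Z_XW$ before projecting by $\dd\pi$; only the horizontal part $\Hh(\na^Z_XW)$, which lifts $\na^M_{\pi_*X}\pi_*W$, survives, giving the required $\na^M$-parallelism of $\iota\circ\gamma$.

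In (ii), passing from ``$\dd\iota|_{\Vv_w}$ does not vanish'' to ``bijective'' by a bare dimension count is a gap: a nonzero $\RR$-linear map $\RR^2\to\RR^2$ need not be injective. You need either the complex-linearity of $\dd\iota|_{\Vv}$ (which part (i) supplies, since both vertical spaces are $J$-invariant), or, more directly, that $\Aa$ is complex-bilinear on $\Hh$ because $Z^6$ is K\"ahler and $\Vv$ is $J$-invariant, so that $\Aa_XY\neq 0$ already forces $\{\Aa_XY,\ J\Aa_XY=\Aa_X(JY)\}$ to span the two-real-dimensional $\Vv_w$. Moreover the ``core identity'' you postpone---that for vertical $\xi$ the endomorphism $\dd\iota_w(\xi)$ of $T_{\pi(w)}M^4$ vanishes iff $\langle\Aa_XY,\xi\rangle=0$ for all horizontal $X,Y$, using skew-adjointness of $\Aa_X$---is the heart of the proposition, not a routine verification, so the proof is incomplete as it stands. (Note that the paper states this result without proof, citing \cite[\S 7.2]{BaWo-book}, where these O'Neill-tensor computations are carried out.)
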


Thus, $\pi:Z^6 \to M^4$ provides a model for the twistor bundle $\Si^+$ of $M^4$.
When $M^4$ is the $4$-sphere, there are two realizations of this, as follows.

\smallskip

1) \emph{A quadric Grassmannian as twistor space.} 
For any positive integers $m$ and $n$ with $2m < n$,  we define the \emph{quadric Grassmannian} $\Tt_{m,n}$ to be the following submanifold of the complex Grassmannian $G_m(\CC^n)$:
\begin{equation} \label{quadric-Grass}
 		\begin{array}{rl} \Tt_{m,n} =&\{ P \in G_m(\CC^n) : P \text{ is isotropic} \} \\
	  		=&\{ P \in G_m(\CC^n) : \inn{v,w}^{\cc} = 0 \text{ for all } v,w \in P \} \,. \end{array}  
\end{equation}
The K\"ahler structure on $G_m(\CC^n)$ restricts to a K\"ahler structure on  $\Tt_{m,n}$. 
Clearly, $\Tt_{m,n}$ can be idenfied with the homogeneous space $\SO(n)/\U(m) \times \SO(n-2m)$.   
Since the frame bundle of $S^4$ can be identified with the homogeneous principal bundle
$\SO(5) \to \SO(5)/\SO(4) = S^4$, it follows from \eqref{twistor-proj} that the twistor bundle of $S^4$ is the quadric Grassmannian $\Tt_{2,5} = \SO(5)/\U(2)$ with twistor projection the Riemannian submersion $\SO(5)/\U(2) \to \SO(5)/\SO(4)$ induced by the canonical inclusion of $\U(2)$ in $\SO(4)$.
More geometrically, the twistor projection $\pi:\Tt_{2,5} \to S^4$ is given by
\begin{equation} \label{twistor-proj-T}
\pi(P)  =(P \oplus \ov P)^{\perp}
\end{equation}
where the direct sum is regarded as an oriented real subspace and $(P \oplus \ov P)^{\perp}$ denotes its positively oriented unit normal.   Explicitly, if $\{e_0,e_1,e_2,e_3,e_4\}$ is a positive orthonormal basis of $\RR^5$ such that $P = \spn_{\CC}\{e_1 + \ii e_2, e_3 + \ii e_4\}$ then $\pi(P) = e_0$.  We thus have an orthogonal direct sum decomposition
$$
\CC^5 = P \oplus \ov{P} \oplus  \spn_{\CC}\,e_0 \,.
$$

To identify the vertical and horizontal spaces of the Riemannian submersion \eqref{twistor-proj-T},
recall first that the $(1,0)$-tangent space of $G_m(\CC^n)$ at $P \in G_m(\CC^n)$ may be identified with the space $\Ll(P, \CC^n/P) \cong \Ll(P,P^{\perp})$ of complex linear maps from $P$ to $\CC^n/P \cong P^{\perp}$, see for example \cite [\S 2A]{EeWo}.   Then it follows from \eqref{quadric-Grass} that, for $P \in \Tt_{m,n}$, we have 
$$
T'_P\Tt_{m,n} = \{\ell \in \Ll(P,P^{\perp}) : \inn{\ell(x), y}^{\cc} + \inn{x,\ell(y)}^{\cc} = 0 \  \text{ for all } x,y \in P \}\,.
$$
It is easy to see that the decomposition \eqref{HV-Si'} into $(1,0)$- vertical and horizontal spaces at $P \in \Tt_{2,5}$ is given by $T'_P\Tt_{2,5} = \Vv'_P \oplus \Hh'_P$
where
\begin{equation} \label{HV-P}
	\left\{\begin{array}{rcl}
	\Vv'_P &= &\{\ell \in \Ll(P,\ov{P}) : \inn{\ell(x), y}^{\cc} + \inn{x,\ell(y)}^{\cc} = 0 \}, \\
	\Hh'_P &= &\Ll(P,  \spn_{\CC}{e_0}).
	\end{array} \right.
\end{equation}

\smallskip

2) \emph{$\CP^3$ as twistor space.} Let $\HH = \{a + b\jj : a,b \in \CC\}$ denote the skew-field of quaternions.  The map $a +b\jj \mapsto (a,b)$ gives a canonical identification of $\HH$ with $\CC^2$ and so of $\HH^2$ with $\CC^4$. Let $\HP^1$ be quaternionic projective space consisting of all one-dim\-en\-sion\-al quaternionic subspaces of $\HH^2$; thus $\HP^1$ is the quotient of $\HH^2 \setminus \{\vec 0\}$ by the (left)-action by $\HH \setminus \{0\}$. We identify $\HP^1$ with $S^4$ by formula \eqref{CP1S2} with the $z_i$ in $\HH$. Then we have the celebrated \emph{Calabi--Penrose twistor map}
\begin{equation} \label{twistor-CP3}
\pi: \CP^3 \to S^4
\end{equation}
given by mapping a complex one-dim\-en\-sion\-al subspace $\spn_{\CC}{v} \in \CP^3$ to the unique quaternionic one-dim\-en\-sion\-al subspace of $\HH^2 = \CC^4$ which contains it; explicitly, $\pi(\spn_{\CC}v) = \spn_{\HH}v = \spn_{\CC}\{v, \jj v\}$.   Give $\CP^3$ its standard Fubini--Study metric and standard complex structure $J$ so that it becomes a K\"ahler manifold and $\pi$ becomes a Riemannian submersion (up to scale).  Then \eqref{twistor-CP3} is another realization of the twistor bundle $\pi:\Si^+(S^4) \to S^4$ with the decompositions \eqref{HV-Si}, \eqref{HV-Si-C} and \eqref{HV-Si'} reading
\begin{equation} \label{HV-CP3}
T\CP^3 = \Hh \oplus \Vv, \quad T^{\cc}\CP^3 = \Hh^{\cc} \oplus \Vv^{\cc}, \quad T'\CP^3 = \Hh' \oplus \Vv',
\end{equation}
where $\Hh$ is the orthogonal complement of $\Vv$ with respect to the Fubini--Study metric on $\CP^3$.

\smallskip

It is easy to verify that both the above models have nowhere integrable horizontal spaces, so satisfy the hypotheses of Proposition \ref{prop:Kahler-twistor-sp}.   They can thus be used as models for the twistor space $\Si^+(S^4)$ of $S^4$.
We shall need the following further properties of that twistor space; to prove these, it is convenient to use its realization as $\CP^3$ discussed above.  
 
Define a tensor field $\tilde{\Aa}^{\Hh} \in \Ga(T^*\CP^3 \otimes \Hh^* \otimes \Vv)$ by $\tilde{\Aa}^{\Hh}_X Y = \Vv(\na^{\CP^3}_X Y)$ \ $(X \in \Ga(T\CP^3), \ Y \in \Ga(\Hh))$.  This extends by complex linearity to a section of $T^{\cc}_*\CP^3 \otimes \Hh^{\cc}_* \otimes \Vv^{\cc}$ which we continue to denote by $\tilde{\Aa}^{\Hh}$.    The tensor $\tilde{\Aa}^{\Hh}$ has the following properties.

\begin{lem} \label{lem:hor-sbb}
{\rm (i)} For any $X \in \Ga(T^{\cc}\CP^3)$, $\tilde{\Aa}_X^{\Hh}$ respects $J$, i.e., $\tilde{\Aa}^{\Hh}_X(JY) = J \tilde{\Aa}^{\Hh}_X Y$ for all\/ $Y \in \Ga(\Hh^{\cc})$.

{\rm (ii)} $\tilde{\Aa}^{\Hh}_X$ is zero for all $X \in \Vv^{\cc}$.

{\rm (iii)} The $(1,0)$-horizontal subbundle $\Hh'$ is a \emph{holomorphic subbundle} of $T'\CP^3$; equivalently $\tilde{\Aa}^{\Hh}_{\ov X}Y = 0$\/ for all $X \in \Ga(T'\CP^3)$, $Y \in \Ga(\Hh')$.
\end{lem}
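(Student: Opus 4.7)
My plan is to prove each part using the K\"ahler structure of $\CP^3$ and the geometry of the Calabi--Penrose twistor fibration $\pi:\CP^3\to S^4$. For part (i), I would use two facts: $\CP^3$ is K\"ahler, so $\na^{\CP^3}J=0$, and the fibres of $\pi$ are complex submanifolds (linearly embedded projective lines $\CP^1\subset\CP^3$), so the vertical subbundle $\Vv$ is $J$-invariant. Hence $\na^{\CP^3}_X(JY)=J\na^{\CP^3}_X Y$, and since $J$ preserves $\Vv$ the projection $\Vv$ commutes with $J$, giving the claim. This step is immediate.

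For part (ii), the key geometric fact is that the fibres of $\pi$ are totally geodesic: any linearly embedded projective subspace $\CP^k\subset\CP^n$ is totally geodesic in the Fubini--Study metric (for instance, it is the fixed-point set of an isometric involution). By O'Neill's theory of Riemannian submersions, totally geodesic fibres are equivalent to the vanishing of the second fundamental tensor of the fibres, which is precisely the condition $\Vv(\na^{\CP^3}_X Y)=0$ for $X\in\Ga(\Vv)$ and $Y\in\Ga(\Hh)$. Extending $\CC$-linearly then yields (ii) for $X\in\Vv^{\cc}$.

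For part (iii), which is the substantial part, I would first use (i) and (ii) to reduce the claim to showing $\tilde{\Aa}^{\Hh}_{\ov{X}}Y=0$ for $\ov{X}\in\Hh''$ and $Y\in\Hh'$, since the $\Vv''$-component of $\ov{X}$ contributes zero by (ii). I then plan to exhibit $\Hh'$ as the kernel of an explicit holomorphic bundle map, which forces $\Hh'$ to be a holomorphic subbundle of $T'\CP^3$. Using the quaternionic structure, define the $\CC$-bilinear alternating form $\omega$ on $\CC^4=\HH^2$ by $\omega(v,w)=\inn{v,\jj w}^{\!\cc}$; since $\omega$ is constant, the assignment $[v]\mapsto\omega(v,\cdot)$ produces a holomorphic bundle map $\theta:T'\CP^3\to L^{-2}$, where $L$ is the tautological line bundle. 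I would then verify, using the identity $\inn{w,\jj v}^{\!\cc}=\omega(w,v)=-\omega(v,w)$ and the description of the vertical direction at $[v]$ as the image of $\jj v$, that $\ker\theta=\Hh'$.

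The main obstacle is this last identification of $\ker\theta$ with the metric horizontal distribution; as a robust alternative, one may simply invoke the well-known fact that $\CP^{2n+1}$ carries a canonical holomorphic contact structure whose contact distribution coincides with the horizontal distribution of the twistor fibration $\CP^{2n+1}\to\HP^n$, and the kernel of a non-degenerate holomorphic 1-form (with values in a line bundle) is automatically a holomorphic subbundle.
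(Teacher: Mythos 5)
Parts (i) and (ii) are essentially the paper's argument: the paper also derives (i) from $\na^{\CP^3}J=0$ (implicitly using, as you do, that the fibres are complex submanifolds so $\Vv$ is $J$-invariant), and proves (ii) by the same metric identity $\inn{\na_X Y, Z}=-\inn{Y,\na_X Z}$ for $X,Z$ vertical and $Y$ horizontal that underlies the O'Neill characterization you invoke. For (iii), however, you take a genuinely different route. The paper stays entirely inside the O'Neill/submersion framework: it notes that $\tilde{\Aa}^{\Hh}$ is antisymmetric on horizontal vectors and $J$-linear in the second slot (part (i)), deduces $J$-linearity in the first slot via $\tilde{\Aa}^{\Hh}_{JX}Y=-\tilde{\Aa}^{\Hh}_Y(JX)=-J\tilde{\Aa}^{\Hh}_Y X=J\tilde{\Aa}^{\Hh}_X Y$, and then combines this complex bilinearity with (ii) to kill the mixed $(\Hh'',\Hh')$ components. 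This is a short, local tensor calculation. Your alternative is global: you exhibit $\Hh'$ as the contact distribution of the canonical holomorphic contact structure on $\CP^3$, i.e., the kernel of a holomorphic line-bundle-valued $1$-form, which is then automatically a holomorphic subbundle. That buys geometric transparency — the holomorphicity becomes visibly an instance of a classical structure on odd projective spaces — at the cost of heavier machinery and of the separate verification that the metric horizontal distribution agrees with the contact one, a point you rightly flag as the delicate step.

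One concrete technical error in your explicit version of (iii): the form $\omega(v,w)=\inn{v,\jj w}^{\cc}$ is \emph{not} $\CC$-bilinear (nor alternating), because quaternionic left-multiplication by $\jj$ acts conjugate-linearly on $\CC^4=\HH^2$, sending $(a,b)\mapsto(-\ov{b},\ov{a})$; thus $\inn{v,\jj w}^{\cc}$ is sesquilinear. What you want is the genuinely complex-bilinear symplectic form $\omega(v,w)=\inn{v,J_0 w}^{\cc}=-v_0w_1+v_1w_0-v_2w_3+v_3w_2$, with $J_0$ the $\CC$-linear matrix from \S\ref{subsec:hol-twistor}; this is alternating, constant, and its kernel at $[W]$ is precisely the horizontal space described there by $\inn{V,J_0W}^{\cc}=0$. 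With that correction the explicit construction of $\theta$ does go through, and your fallback of simply citing the contact structure on $\CP^{2n+1}$ sidesteps the issue entirely.
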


\begin{proof}
(i) This follows quickly from the parallelity of $J$.

(ii)  Let $X,\, Z \in \Ga(\Vv^{\cc})$ and $Y \in \Ga(\Hh^{\cc})$.  Then we have $\inn{\Aa^{\Hh^{\cc}}_X Y, Z} = \inn{\na^{\CP^3}_X Y, Z} = -\inn{Y, \na^{\CP^3}_X Z}=0$, since the fibres of $\pi$ are totally geodesic.

(iii) Let $X, Y \in \Ga(\Hh^{\cc})$.  Since $\pi$ is a Riemannian submersion, $\tilde{\Aa}^{\Hh}$ is antisymmetric on horizontal vectors \cite{O'N}.  Thus, using part (i),
\begin{equation} \label{J-linear}
\tilde{\Aa}^{\Hh}_{JX} Y = - \tilde{\Aa}^{\Hh}_Y(JX) = -J \tilde{\Aa}^{\Hh}_Y X = J \tilde{\Aa}^{\Hh}_X Y \,;
\end{equation}
together with (ii) this gives (iii).
\end{proof}

Denote the restriction of $\tilde{\Aa}^{\Hh}$ to $\Hh^{\cc}$ by $\Aa^{\Hh} \in \Ga(\Hh^{\cc}_* \otimes \Hh^{\cc}_* \otimes \Vv^{\cc})$.  This is essentially O'Neill's tensor \cite{O'N}; we shall call it the \emph{second fundamental form of\/ $\Hh$ (in\/ $T\CP^3$)}.  This tensor has the following properties.

\begin{lem} \label{lem:sff}
{\rm (i)}  $\Aa^{\Hh}$ is \emph{antisymmetric}, i.e., $\Aa^{\Hh}_X Y = -\Aa^{\Hh}_Y X$ for all $X, Y \in \Ga(\Hh^{\cc})$.
Hence $\Aa^{\Hh}_X Y = \frac{1}{2} I(X,Y)$ \ $(X, Y \in \Ga(\Hh^{\cc})\,)$ where $I$ is the integrability tensor \eqref{int-tensor} of\/ $\Hh$.

{\rm (ii)} $\Aa^{\Hh}$ restricts to a tensor  $\Aa^{\Hh'} \in \Ga\bigl((\Hh')^* \otimes (\Hh')^* \otimes \Vv'\bigr)$ which we shall call the \emph{second fundamental form of $\Hh'$}.

{\rm (iii)} For each $w \in \CP^3$ and non-zero $X \in \Hh'_w$, the linear map $\Aa^{\Hh'}_X:\Hh'_w \to \Vv'_w$ is non-zero.

{\rm (iv)}  $\Aa^{\Hh'}$ is holomorphic in horizontal directions in the sense that
$\na_{\ov{Z}} \Aa^{\Hh'} = 0$ for all $Z \in \Hh'$.  (Here $\na$ denotes the connection on
$\Hh^{\cc}_* \otimes \Hh^{\cc}_* \otimes \Vv^{\cc}$ induced from the Levi-Civita connection $\na^{\CP^3}$ of\/ $\CP^3$.) 
\end{lem}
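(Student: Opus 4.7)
My plan is to handle (i) by O'Neill's standard identity, (ii) by Lemma \ref{lem:hor-sbb}(i), (iii) by an algebraic reduction to nowhere-integrability, and (iv) by recognizing $\Aa^{\Hh'}$ as a holomorphic section of a holomorphic bundle (after passing to a natural quotient). For (i), I will invoke the standard formula $\Vv(\na^{\CP^3}_X Y) = \tfrac{1}{2}\Vv([X,Y])$ for horizontal $X, Y$ (a consequence of the fibres being totally geodesic and the Koszul formula), which immediately yields both the antisymmetry and the identification $\Aa^{\Hh}_X Y = \tfrac{1}{2}I(X,Y)$; the identity extends to $\Hh^{\cc}$ by complex bilinearity. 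For (ii), $Y \in \Hh'$ means $JY = \ii Y$, so Lemma \ref{lem:hor-sbb}(i) gives $J(\Aa^{\Hh}_X Y) = \ii \Aa^{\Hh}_X Y$, placing the value in the $(+\ii)$-eigenspace $\Vv'$.

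For (iii), the main observation is that an antisymmetric bilinear map from a complex $2$-plane $\Hh'_w$ to a complex line $\Vv'_w$ either vanishes identically or has $\Aa^{\Hh'}_X \neq 0$ for every nonzero $X \in \Hh'_w$, so it suffices to show $\Aa^{\Hh'} \neq 0$ pointwise. Given real horizontal $X, Y \in \Hh_w$ with $I(X, Y) \neq 0$, decompose into $(1,0)$-parts $X = X_1 + \ov{X_1}$, $Y = Y_1 + \ov{Y_1}$. The cross terms $\Aa^{\Hh}(X_1, \ov{Y_1})$ vanish: by Lemma \ref{lem:hor-sbb}(i) with $J\ov{Y_1} = -\ii\ov{Y_1}$ they lie in $\ov{\Vv'}$, while antisymmetry from (i) rewrites them as $-\Aa^{\Hh}(\ov{Y_1}, X_1)$, which by the same lemma (now with $JX_1 = \ii X_1$) lies in $\Vv'$; since $\Vv' \cap \ov{\Vv'} = \{0\}$ they are zero. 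Hence $\Aa^{\Hh}(X, Y) = 2\Re \Aa^{\Hh'}(X_1, Y_1)$, forcing $\Aa^{\Hh'}(X_1, Y_1) \neq 0$.

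For (iv), I pass to the holomorphic category. By Lemma \ref{lem:hor-sbb}(iii), $\Hh'$ is a holomorphic subbundle of $T'\CP^3$, so $(\Hh')^*$ and the quotient $T'\CP^3/\Hh'$ are holomorphic bundles. The inclusion-then-projection $q:\Vv' \to T'\CP^3/\Hh'$ is a smooth bundle isomorphism. Define $\hat\Aa(X, Y) = [\na^{\CP^3}_X Y] \in \Ga(T'\CP^3/\Hh')$ for $X, Y \in \Ga(\Hh')$; by (i) this equals $\tfrac{1}{2}[X,Y] \bmod \Hh'$. Since the Lie bracket of local holomorphic sections of $\Hh'$ is a local holomorphic section of $T'\CP^3$, $\hat\Aa$ is a holomorphic section of $(\Hh')^* \otimes (\Hh')^* \otimes T'\CP^3/\Hh'$ and is therefore annihilated by $\na_{\ov W}$ for every $\ov W \in T''\CP^3$. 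A short calculation, using that $\na^{\CP^3}_{\ov W}$ preserves $T'\CP^3$ (by the K\"ahler condition) and preserves sections of $\Hh'$ (since $\Hh'$ is a holomorphic subbundle), shows that $q^{-1}$ intertwines $\na^{T'/\Hh'}_{\ov W}$ with $\na^{\Vv'}_{\ov W}$; applying $q^{-1}$ to $\na_{\ov Z}\hat\Aa = 0$ then yields $\na_{\ov Z}\Aa^{\Hh'} = 0$ for $Z \in \Hh'$.

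I expect (iv) to be the main obstacle. The subtlety is that $\Vv'$ need not itself be a holomorphic subbundle of $T'\CP^3$, so one cannot directly assert $\Aa^{\Hh'}$ is a holomorphic section of $(\Hh')^* \otimes (\Hh')^* \otimes \Vv'$ and invoke the Chern-connection identity $\na'' = \bar\partial$. One must instead work through the quotient $T'\CP^3/\Hh'$, and verify that the natural induced connections intertwine under the smooth isomorphism $q$ in antiholomorphic directions — a bookkeeping step that hinges on the K\"ahler identity $\na^{\CP^3}(T') \subseteq T'$ and on holomorphicity of $\Hh'$.
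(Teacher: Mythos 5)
Your proposal is correct for all four parts, and for (iii) and especially (iv) it departs from the paper's route. For (i) and (ii) you and the paper do essentially the same thing: O'Neill for (i), $J$-equivariance of $\tilde{\Aa}^{\Hh}$ for (ii) (the paper cites its display \eqref{J-linear}, which rotates the first slot; you rotate the second — same idea). For (iii) the paper argues by contradiction: if $\Aa^{\Hh'}$ vanished at a point, then (via Lemma \ref{lem:hor-sbb}(iii), the holomorphicity of $\Hh'$, to kill the mixed-type components) $\Aa^{\Hh}$ would vanish there, contradicting non-integrability; you instead argue directly, decomposing real horizontal vectors into $(1,0)$- and $(0,1)$-parts and killing the cross terms using Lemma \ref{lem:hor-sbb}(i) together with $\Vv'\cap\ov{\Vv'}=\{0\}$. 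Both are valid and about equally short. The substantive difference is in (iv): the paper unwinds $\na_{\ov Z}\Aa^{\Hh'}$ applied to holomorphic frames, discards one term via holomorphicity of $\Hh'$, applies the curvature identity, kills two terms by holomorphicity of $X,Y,Z$, and then must invoke the \emph{explicit curvature formula of $\CP^3$} to see $\Vv(R(X,\ov Z)Y)=0$. You instead transport $\Aa^{\Hh'}$ to the holomorphic quotient $T'\CP^3/\Hh'$, observe $\hat{\Aa}(X,Y)=\tfrac12[X,Y]\bmod\Hh'$ so that holomorphicity is automatic (Lie brackets of holomorphic fields are holomorphic, and $\Hh'$ is a holomorphic subbundle), and carry the conclusion back via the isomorphism $q:\Vv'\to T'/\Hh'$, checking that $q$ intertwines the two $(0,1)$-covariant derivatives — for which you need exactly the K\"ahler identity $\na^{\CP^3}_{\ov W}(T')\subseteq T'$ (so that the $\Hh$-projection of $\na^{\CP^3}_{\ov W}s$ lands in $\Hh'$) and the holomorphicity of $\Hh'$ (so that the quotient connection is well defined); I have checked that these verifications go through. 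What each buys: the paper's computation is more elementary and self-contained, but ties the result to the specific curvature of $\CP^3$; your quotient argument is more conceptual, makes no use of the curvature formula (so works for any K\"ahler twistor-type submersion with totally geodesic fibres and holomorphic $\Hh'$), and in fact yields the stronger conclusion $\na_{\ov W}\Aa^{\Hh'}=0$ for every $W\in T'\CP^3$, not only $W\in\Hh'$.
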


\begin{proof}
(i)   Antisymmetry follows as for $\tilde{\Aa}^{\Hh}$ above.  Thus
$$
\Aa^{\Hh}_X Y = - \Aa^{\Hh}_Y X = \tfrac{1}{2} \Vv([X,Y]) = \tfrac{1}{2} I(X,Y)\,.
$$

(ii) This quickly follow from \eqref{J-linear}.

(iii) Suppose that there is some $w \in \CP^3$ and non-zero $X \in \Hh'_w$ such that $\Aa^{\Hh'}_X Y = 0$ for all $Y \in \Hh'_w$.  Then $\Aa^{\Hh'}_Y X = -\Aa^{\Hh'}_X Y = 0$.  Choose $Y$ such that $\{X,Y\}$ is a basis for $\Hh'_w$.  Then these equations together with antisymmetry show that $\Aa^{\Hh'}$ is zero.  It follows from this and Lemma \ref{lem:hor-sbb}(iii) that $\Aa^{\Hh}$ is also zero.  However, this is not the case, as $\Hh$ is nowhere integrable by O'Neill's formulae \cite{O'N}.

(iv) Let $X$, $Y$ and $Z$ be holomorphic sections of $\Hh'$.  Then, using the definition of $\Aa^{\Hh'}$, we have
\begin{eqnarray*}
(\na_{\ov{Z}} \Aa^{\Hh'})_X Y &= &\Vv\bigl(\na^{\CP^3}_{\ov{Z}} (\Aa^{\Hh'}_X Y)\bigr) \\
&= &\Vv\bigl(\na^{\CP^3}_{\ov{Z}}(\Vv(\na^{\CP^3} _X Y))\bigr) \\
&= &\Vv\bigl(\na^{\CP^3}_{\ov{Z}}(\na^{\CP^3} _X Y)\bigr)
		- \Vv\bigl(\na^{\CP^3}_{\ov{Z}}(\Hh(\na^{\CP^3} _X Y))\bigr)\,.
\end{eqnarray*}
Now, the last term on the right-hand side is zero since $\Hh(\na^{\CP^3} _X Y))$ lies in $\Hh'$ which, by Lemma \ref{lem:hor-sbb}(iii), is a holomorphic subbundle of $T^{\cc}\CP^3$.  Using the definition of the curvature tensor, the first term on the right-hand side equals
$$		
\Vv\bigl(\na^{\CP^3}_X(\na^{\CP^3} _{\ov{Z}} Y)\bigr) + \Vv(\na^{\CP^3}_{[\ov{Z},X]}Y) + \Vv\bigl(R(X,\ov{Z})Y\bigr) \,.
$$
The first term of this is zero by holomorphicity of $Y$; the second term is zero since $[\ov{Z},X] = \na^{\CP^3}_{\ov{Z}}X - \ov{\na^{\CP^3}_{\ov{X}}Z}$ which vanish by holomorphicity of $X$ and $Z$; and the last term is zero by the standard formula \cite[Chapter 9]{KoNo} for the curvature tensor of $\CP^3$.		
\end{proof}

\subsection{Holomorphic maps into the twistor space} \label{subsec:hol-twistor}

Let $\Hol(S^2,\CP^3)$ denote the set of holomorphic maps from the Riemann sphere $S^2$ to $\CP^3$.  On identifying $S^2$ biholomorphically with the extended complex plane $\{z \in \Ci\}$ as in \eqref{stereo}, we see that any $f \in \Hol(S^2,\CP^3)$ is of the form
\begin{equation} \label{holmap-CP3}
f(z) =  [F(z)] = [F_0(z), F_1(z), F_2(z), F_3(z)] \qquad (z \in \Ci)
\end{equation}
where $F = (F_0, F_1, F_2, F_3)$ is a quadruplet of polynomials which we take to be coprime; we interpret $f(\infty)$ as a limit. (Here, for $W \in \CC^4 \setminus \{\vec{0}\}$,  $[W] = [W_1,W_2,W_3,W_4]$ denotes the point with homogeneous coordinates $(W_1,W_2,W_3,W_4)$ so that $[W] = p(W)$ where $p$ denotes the natural projection from $\CC^4 \setminus \{\vec{0}\}$ to $\CP^3$.)  The \emph{degree} of $f$ is defined to be the degree of the induced mapping on cohomology:   $f^*:\ZZ \cong H^2(\CP^3,\ZZ) \to H^2(S^2,\ZZ) \cong \ZZ$; this is equal to the maximum degree of the polynomials $F_i$\,.

We give $\Hol(S^2,\CP^3)$ the compact-open topology.   Then its connected components are the spaces $\Hol_d(S^2,\CP^3)$ of holomorphic maps of degree $d$.   Let $\CC[z]_d^4$ denote the vector space of all quadruplets of polynomials of degree at most $d$;  taking coefficients defines a canonical isomorphism of  $\CC[z]_d^4$ with $\CC^{4d+4}$.  This factors to a canonical identification of the projectivization $P(\CC[z]_d^4)$ with $\CP^{4d+3}$.

The map $i:\Hol_d(S^2,\CP^3) \to P(\CC[z]_d^4)$ given by $f \mapsto [F_0, F_1, F_2, F_3]$ defines a bijection onto the dense open subset $V_d$ of $P(\CC[z]_d^4) \cong \CP^{4d+3}$ given by those quadruplets of polynomials which are coprime and of maximum degree exactly $d$.  Giving $V_d$ the subspace topology, it is easily seen that this bijection is a homeomorphism which endows $\Hol_d(S^2,\CP^3)$ with the structure of a complex manifold of dimension $4d+3$.

Call a map $f:S^2 \to \CP^3$ \emph{full} if its image does not lie in a proper projective subspace $\CP^2 \subset \CP^3$.  Let $\Hol_d^{\ff}(S^2,\CP^3)$ denote the space of full holomorphic maps of degree $d$.  Now  $i$ maps $\Hol_d^{\ff}(S^2,\CP^3)$ onto the open subset $V_d^{\ff}$ of $V_d$ given by those quadruplets $F \in V_d$ whose components $F_i$ are linearly independent.  Since any four polynomials of degree $\leq 2$ are linearly dependent, $V_d^{\ff}$ is non-empty if and only if $d \geq 3$. Hence, \emph{$\Hol_d^{\ff}(S^2,\CP^3)$ is empty if\/ $d \leq 2$ and is a complex manifold of dimension $4d+3$ if\/ $d \geq 3$}.

A smooth map $f:S^2 \to \CP^3$ is called \emph{horizontal} if the image of its differential is contained in the horizontal subbundle $\Hh$ given by \eqref{HV-CP3}.  Let $w = [W]$ where $W = (W_0,W_1,W_2,W_3)$; then, on identifying $T'_w\CP^3$ with  $\Ll(w,w^{\perp})$, the $(1,0)$-vertical space $\Vv'_w$ at $w$ is the subspace $\Ll \bigl(w,\spn(\jj\, W)\bigr)$ where $\jj\, W = (-\ov{W}_1,\ov{W}_0,-\ov{W}_3,\ov{W}_2)$.  A \emph{holomorphic} map \eqref{holmap-CP3} is horizontal if and only if the image of $\dd f$ is Hermitian orthogonal  to $\Vv'$ in $T'\CP^3$.  To write this nicely, let $J_0$ denote the matrix
$$
J_0 = \left( \begin{array}{cccc} 0& -1 & 0 & 0 \\ 1 & 0 & 0 & 0 \\ 0 & 0 & 0 & -1 \\ 0 & 0 & 1 & 0 \end{array} \right);
$$
then \emph{a vector $v = \dd p_W(V)$ at $w$ is horizontal if $\inn{V, J_0W}^{\cc} = 0$}.

Note that the group of holomorphic isometries of $\CP^3$ is the projective group $\PU(4)$ corresponding to $\U(4)$.  The subgroup of $\U(4)$ which preserves $J_0$ and hence the horizontality condition, is the symplectic group $\Sp(2)$, cf.\ \cite{BoWo-higher}. 

For any $d \in \{1,2, \ldots \}$, define a map $Q =  Q_d:\CC[z]_d^4 \to \CC[z]_{2d-2}$ by
\begin{eqnarray}
Q(F) = Q_d(F) &=  &\inn{F', J_0 F} \ = \ \{F_0,F_1\}+\{F_2,F_3\} \notag \\
		&= &F_1 F'_0 - F_0 F'_1 + F_3 F'_2 - F_2 F'_3\,. \label{Q}
\end{eqnarray}

Here, for any polynomials $G_1$, $G_2$, we write $\{G_1,G_2\} = -G_1 G_2' + G_1' G_2$.  Note that
$\deg\{G_1,G_2\} = \deg G_1 + \deg G_2 - 1$ unless $\deg G_1 = \deg G_2$, in which case $\deg\{G_1,G_2\} \leq \deg G_1 + \deg G_2 - 2$.    Then \emph{$f = [F] \in \Hol_d(S^2,\CP^3)$ is horizontal if and only if}
\begin{equation} \label{horizontal}
Q(F) = 0 \,.
\end{equation}
Note that this condition makes sense for any choice of holomorphic lift $F$ of $f$ defined on an open subset of the domain and not necessarily polynomial, and is independent of that choice.

Thus, under the inclusion mapping $i:\Hol_d(S^2,\CP^3) \to \CP^{4d+3}$, the space of horizontal holomorphic maps $\HHol_d(S^2,\CP^3)$ of degree $d$ corresponds to the intersection of the algebraic variety $Z(Q_d) = \{[F] \in P(\CC[z]_d^4) : Q_d(F) = 0\}$ with the dense open subset $V_d$ of $P(\CC[z]_d^4)$.  The subspace $\HHol_d^{\ff}(S^2,\CP^3)$ of \emph{full} horizontal holomorphic maps corresponds to the intersection of $Z(Q_d)$ with the smaller dense open subset $V_d^{\ff}$ of $P(\CC[z]_d^4)$.

Given a holomorphic map $f:M^2 \to \CP^3$, we form the pull-back bundles $V' = f^{-1}(\Vv')$ and $H' = f^{-1}(\Hh')$ over $M^2$.  They are complex subbundles of the holomorphic bundle $f^{-1}T'\CP^3$.   For any complex coordinate $z$ for $M^2$, define linear bundle maps 
$A'_{H'}\,, A''_{H'}:H' \to V'$ locally by
$$
A'_{H'}(Y) = V(\na^f_{\pa/\pa z}Y) \,, \ 
	A''_{H'}(Y) = V(\na^f_{\pa/\pa \bar{z}}Y) \qquad \bigl(Y \in \Ga(H')\bigr).	
$$
Here $V$ denotes projection associated to the pull-back of the decomposition of complexified bundles \eqref{HV-Si-C}; parallelity of $J$ as in Lemma \ref{lem:hor-sbb}(i) ensures that the maps $A'_{H'}$ and $A''_{H'}$ have image in $V'$.    Similar definitions can be given for $A'_{V'}\,, A''_{V'}:V' \to H'$ by reversing the roles of $H$ and $V$.   It is easy to see that $A'_{V'}$ and $A''_{V'}$ are minus the adjoints of $A''_{H'}$ and  $A'_{H'}$, respectively.
Note that we have used a complex coordinate for convenience; our results will not depend on that choice.

Recall that, if $M^2$ is a Riemann surface, any complex bundle $E \to M^2$ equipped with a connection $\na^E$ has a unique holomorphic structure such that a section $s$ is holomorphic if and only if $\na^E_{\ov{Z}}s = 0$ \ $(Z \in T'M^2)$ \cite{KoMa}; this is the \emph{Koszul--Malgrange holomorphic structure}.
When $E = T'M^2$, this coincides with the natural holomorphic structure on $T'M^2$.

Say that a smooth map $f:M^2 \to \CP^3$ is \emph{vertical} if its image lies in a fibre of the twistor projection \eqref{twistor-CP3}, equivalently,   $\pi \circ f$ is a constant map.  
We have a version of Proposition \ref{lem:sff} for the pull-back bundles $H'$ and $V'$ as follows.

\begin{prop} \label{prop:hor-sbb2}

Let $f:M^2 \to \CP^3$ be a holomorphic map.  

{\rm (i)} $H'$ is a holomorphic subbundle of $f^{-1}T'\CP^3$; equivalently, $A''_{H'} = 0$.

{\rm (ii)} Suppose that $f$ is horizontal.  Then $A'_{H'}$ is holomorphic with respect to the Koszul--Malgrange holomorphic structure on $H'^* \otimes V'$, i.e., $\na^{H'^* \otimes V'}_{\pa/\pa \bar{z}}(A'_{H'}) = 0$;
equivalently,  $A''_{V'}$ is antiholomorphic in the sense that $\na^{V'^* \otimes H'}_{\pa/\pa z}(A''_{V'}) = 0$.

{\rm (iii)} Suppose that $f$ is not vertical. Then $A'_{H'}$ is not identically zero; equivalently $A''_{V'}$ is not identically zero.
\end{prop}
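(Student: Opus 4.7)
The plan is to pull back the ambient structural results of Lemmas \ref{lem:hor-sbb} and \ref{lem:sff} along the holomorphic map $f$. The basic observation is that, for any local section $Y$ of $H' = f^{-1}\Hh'$ with extension $\tilde Y \in \Ga(\Hh')$, one has $\na^f_Z Y = \na^{\CP^3}_{df(Z)}\tilde Y$ for $Z \in T^{\cc}M^2$, a quantity independent of the extension. Since $f$ is holomorphic, $df(\pa/\pa \bar z) \in T''\CP^3$, so Lemma \ref{lem:hor-sbb}(iii) immediately gives $\Vv(\na^{\CP^3}_{df(\pa/\pa \bar z)}\tilde Y) = 0$, i.e., $A''_{H'}(Y) = 0$. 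As the Koszul--Malgrange holomorphic structure on $f^{-1}T'\CP^3$ is associated with the pull-back connection, the vanishing of the vertical part of $\na^f_{\pa/\pa \bar z}Y$ for every $Y \in \Ga(H')$ is precisely the assertion that $H'$ is a holomorphic subbundle, yielding (i).

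For (ii), horizontality of $f$ places $\xi := df(\pa/\pa z)$ in $H'$, and the extension formula gives
\begin{equation*}
A'_{H'}(Y) = \Vv(\na^{\CP^3}_{df(\pa/\pa z)}\tilde Y) = \Aa^{\Hh'}(\xi, \tilde Y).
\end{equation*}
Viewing this as the contraction $A'_{H'} = (f^{*}\Aa^{\Hh'})(\xi, \cdot\,)$, I will establish holomorphicity via two facts. First, $\xi$ is a holomorphic section of $H'$: this is the standard K\"ahler identity $\na^f_{\pa/\pa \bar z}(df(\pa/\pa z)) = 0$, obtained because torsion-freeness gives $\na^f_{\pa/\pa \bar z}\xi = \na^f_{\pa/\pa z}\ov\xi$, while parallelity of $J$ places the two sides in $T'\CP^3$ and $T''\CP^3$ respectively, forcing both to vanish. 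Second, $f^{*}\Aa^{\Hh'}$ is a holomorphic section of $(H')^{*} \otimes (H')^{*} \otimes V'$: with $\xi \in \Hh'$, Lemma \ref{lem:sff}(iv) applied to $Z = \xi$ gives $\na^{\CP^3}_{df(\pa/\pa \bar z)}\Aa^{\Hh'} = \na^{\CP^3}_{\ov \xi}\Aa^{\Hh'} = 0$, which is the condition for pull-back holomorphicity. The Leibniz rule then yields $\na^{H'^{*}\otimes V'}_{\pa/\pa \bar z}A'_{H'} = 0$. The equivalent antiholomorphicity of $A''_{V'}$ follows because $A''_{V'} = -(A'_{H'})^{*}$ and taking adjoints exchanges holomorphicity and antiholomorphicity.

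For (iii), I drop the horizontality assumption and decompose $\xi = \xi^{\Hh} + \xi^{\Vv}$. Lemma \ref{lem:hor-sbb}(ii) kills the vertical-direction contribution, leaving the general formula $A'_{H'}(Y) = \Aa^{\Hh'}(\xi^{\Hh}, \tilde Y)$. If $f$ is non-vertical, then $d(\pi \circ f) = d\pi \circ df \neq 0$ at some point $w$, forcing $\xi^{\Hh}_w \neq 0$; Lemma \ref{lem:sff}(iii) then produces $Y \in H'_w$ with $A'_{H'}(Y)_w \neq 0$. The equivalent non-vanishing of $A''_{V'}$ is again by adjoints. The main technical hurdle is the K\"ahler identity $\na^f_{\pa/\pa \bar z}\xi = 0$ used in (ii); a minor bookkeeping issue is that the extension $\tilde Y$ need not exist globally near branch points of $f$, but the formulas depend only on the values of $\tilde Y$ along $f(M^2)$ and on its first derivative in the direction of $df$, so they are unambiguous.
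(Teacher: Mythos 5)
Your argument is correct and follows essentially the same route as the authors': pull back the ambient tensorial identities of Lemmas \ref{lem:hor-sbb} and \ref{lem:sff} along $f$. For (i) and (iii) the two proofs are essentially identical (the authors phrase (i) more compactly as ``the pull-back of a holomorphic subbundle by a holomorphic map,'' and for (iii) use Lemma \ref{lem:hor-sbb}(ii) to reduce $\tilde{\Aa}^{\Hh'}_Z$ to $\Aa^{\Hh'}_{\Hh(Z)}$ exactly as you do by splitting $\xi$ into horizontal and vertical parts).

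The one substantive difference is in (ii). The paper writes $A'_{H'}$ as ``the pull-back of $\Aa^{\Hh'}_Z$ with $Z=\dd f(\pa/\pa z)$'' and invokes Lemma \ref{lem:sff}(iv) without comment. You correctly observe that the Leibniz rule produces two terms: $(\na_{\ov\xi}\Aa^{\Hh'})(\xi,\cdot)$, which is the term Lemma \ref{lem:sff}(iv) kills, and $\Aa^{\Hh'}(\na^{H'}_{\pa/\pa\bar z}\xi,\cdot)$, whose vanishing needs the holomorphicity of $\xi$. You supply this cleanly via the torsion-free/K\"ahler identity $\na^f_{\pa/\pa\bar z}(\dd f(\pa/\pa z)) = \na^f_{\pa/\pa z}(\dd f(\pa/\pa\bar z))$, with the type considerations forcing both sides to vanish. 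This is a genuine small gap in the paper's one-line deduction, and your write-up fills it in; the fact itself is standard for holomorphic maps into K\"ahler manifolds, which is presumably why the authors regarded it as implicit.

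Two tiny bookkeeping points. In (iii), $A''_{V'}=-(A'_{H'})^*$ turns a holomorphicity statement in $\pa/\pa\bar z$ into an antiholomorphicity statement in $\pa/\pa z$ as claimed, but for the non-vanishing assertion in (iii) you don't need the adjoint relation at all — non-vanishing of a map and of minus its adjoint are trivially equivalent — so the appeal is heavier than necessary. And your concluding remark about the local extension $\tilde Y$ is correct but could be stated more simply: $\tilde{\Aa}^{\Hh}$ is a tensor, so $\tilde{\Aa}^{\Hh}_{\dd f(\pa/\pa z)}Y$ at a point depends only on pointwise data, and no extension is needed to make the formula $A'_{H'}(Y)=\tilde{\Aa}^{\Hh}_{\dd f(\pa/\pa z)}Y$ well defined.
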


\begin{proof}

(i) It is the pull-back of a holomorphic subbundle by a holomorphic map.

(ii) {}From its definition, we see that that $A'_{H'}$ is the pull back of $\Aa^{\Hh'}_Z$ where $Z = \dd f(\pa/\pa z)$, so the result follows from Lemma \ref{lem:sff}(iv).   Taking the adjoint yields $\na^{V'^* \otimes H'}_{\pa/\pa z}(A''_{V'}) = 0$.

(iii) For any $x \in M^2$, on identifying $H'_x$ with $\Hh'_{f(x)}$, we have $A'_{H'} = \tilde{\Aa}^{\Hh'}_Z$, where $Z = \dd f_x(\pa/\pa z) \in T_{f(x)}'\CP^3$.   By Lemma \ref{lem:hor-sbb}(ii), this equals $\Aa^{\Hh'}_{\Hh(Z)}$.  Choose $x$ to be a point where $f$ is not vertical so that $\Hh(Z)$ is non-zero.  Then $\Aa^{\Hh'}_{\Hh(Z)}$, and so $A'_{H'}$, is non-zero by Lemma \ref{lem:sff}(iii).
\end{proof}

\subsection{Infinitesimal deformations of holomorphic maps} \label{subsec:inf-hol}
Let $f:M \to N$ be a holomorphic map between complex manifolds.  For any $u \in \Ga(f^{-1}TN)$, let $u'$
denote its \emph{$(1,0)$-component} under the decomposition $f^{-1}T^{\cc}N = f^{-1}T'N \oplus f^{-1}T''N$; thus $u' = \tfrac{1}{2}(u - \ii J^N u)$.  Say that $u$ (or $u'$) is \emph{holomorphic} if $\na^f_{\ov{Z}} u' = 0$ for all
$Z \in T'M$.  We have the following analogue of Proposition \ref{prop:first-order} for holomorphic maps (see the footnote to that proposition for the meaning of $\pa/\pa t$).

\begin{defn} Let $\{f_t\}$ be a smooth $1$-parameter family of maps between complex manifolds $M$ and $N$.
Say that $\{f_t\}$ is \emph{holomorphic to first order} if
\begin{equation} \label{first-order-hol}
\left(\frac{\pa f}{\pa\ov{z}}\right)' = 0 \quad \text{and} \quad
	\left. \frac{\pa}{\pa t}\left(\frac{\pa f_t}{\pa\ov{z}}\right)' \right\vert_{t=0} = 0 \,.
\end{equation}
\end{defn}

\begin{prop} \label{prop:inf-hol}
 Let\/ $f:M \to N$ be a holomorphic map between K\"ahler manifolds, and let\/ $u \in \Ga(f^{-1}TN)$. 
Let $\{f_t\}$ be a smooth variation of\/ $f$ tangent to $u$.  Then
$$
\na^f_{\ov{Z}}u' =  \left. \frac{\pa}{\pa t}\left(\frac{\pa f_t}{\pa\ov{z}}\right)' \right\vert_{t=0} \,.
$$

In particular, $u$ is a holomorphic vector field along $f$ if and only if $\{f_t\}$ is holomorphic to first order.
\end{prop}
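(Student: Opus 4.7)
The plan is to follow the pattern of Proposition \ref{prop:first-order}: first, rewrite the right-hand side of the proposed identity as a covariant derivative in the pull-back bundle; second, swap the $t$- and $\ov z$-derivatives using torsion-freeness of the Levi-Civita connection; and third, use the K\"ahler hypothesis to commute the $(1,0)$-projection past covariant derivatives.

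To carry this out, I would assemble $\{f_t\}$ into a single smooth map $\Phi:M \times (-\ve,\ve) \to N$, $\Phi(x,t)=f_t(x)$, and work in the pull-back bundle $\Phi^{-1}TN$ with its pull-back connection $\na^{\Phi}$. Setting
\[
X_t = \left(\frac{\pa f_t}{\pa \ov{z}}\right)',
\]
we have $X_0 = 0$ because $f = f_0$ is holomorphic. Exactly as in the footnote to Proposition \ref{prop:first-order}, the vanishing of $X_0$ means that the componentwise derivative $\pa X_t/\pa t|_{t=0}$ coincides with the covariant derivative $\na^{\Phi}_{\pa/\pa t} X_t|_{t=0}$: they differ only by Christoffel-symbol terms proportional to $X_0$.

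Next, since $N$ is K\"ahler, $J^N$ is parallel, so the $(1,0)$-projection $v \mapsto v' = \tfrac{1}{2}(v-\ii J^N v)$ is a parallel bundle endomorphism on $\Phi^{-1}T^{\cc}N$ and in particular commutes with $\na^{\Phi}$. Combined with the standard swap
\[
\na^{\Phi}_{\pa/\pa t}\,\frac{\pa f_t}{\pa \ov z} \;=\; \na^{\Phi}_{\pa/\pa \ov z}\,\frac{\pa f_t}{\pa t}\,,
\]
which follows from torsion-freeness of the Levi-Civita connection of $N$ and $[\pa/\pa t,\pa/\pa \ov z] = 0$ on the parameter space, this yields
\[
\na^{\Phi}_{\pa/\pa t} X_t \;=\; \left(\na^{\Phi}_{\pa/\pa \ov z}\,\frac{\pa f_t}{\pa t}\right)' .
\]
Evaluating at $t=0$ and using $\pa f_t/\pa t|_{t=0} = u$ gives $(\na^f_{\pa/\pa \ov z} u)' = \na^f_{\pa/\pa \ov z} u'$, which is the asserted formula. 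The final claim follows at once: since $f$ is holomorphic the first condition of \eqref{first-order-hol} is automatic, and the identity just derived shows that the second condition is equivalent to $\na^f_{\pa/\pa \ov z} u' = 0$, i.e.\ to $u$ being holomorphic along $f$. The only delicate point in the whole argument is the justification for identifying componentwise and covariant time derivatives; it is precisely the holomorphicity of $f_0$ that provides $X_0 = 0$ and makes this legal.
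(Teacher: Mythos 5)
Your proof is correct and follows essentially the same route as the paper's (very terse) proof, which says only ``Working on $M \times (-\ep,\ep)$, we swap the order of the derivatives and use the parallelity of $J$.'' You supply exactly these two ingredients, and you correctly identify and resolve the one subtle point — that the componentwise $\pa/\pa t$ in the statement agrees with the covariant derivative $\na^\Phi_{\pa/\pa t}$ only because $X_0 = (\pa f_0/\pa\ov z)' = 0$ by holomorphicity of $f$ — which is the same issue the paper addresses in the footnote to Proposition~\ref{prop:first-order}.
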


\begin{proof}
Working on $M \times (-\ep,\ep)$, we swap the order of the derivatives and use the parallelity of $J$, see \cite[Proposition 4.1]{Wo-Tokyo}.
\end{proof}

For the following see \cite[Chapter 5, Theorem (3.2)]{Ur}, \cite[Proposition 4.1]{LeWo2} or \cite[Proposition 4.2]{Wo-Tokyo}.

\begin{prop} \label{prop:hol}
Let\/ $f:M \to N$ be a holomorphic map between compact K\"ahler
manifolds.
Then any Jacobi field along\/ $f$ is holomorphic.
\qed \end{prop}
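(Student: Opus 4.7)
The plan is to establish a Weitzenb\"ock-type identity expressing the Jacobi operator on $(1,0)$-vector fields along $f$ as twice a $\bar\pa$-Laplacian, and then conclude by integration by parts on the compact manifold $M$.

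First, I would complexify $f^{-1}TN$ to $f^{-1}T^{\cc}N = f^{-1}T'N \oplus f^{-1}T''N$, and decompose any real vector field $u$ along $f$ as $u = u' + \ov{u'}$ with $u' \in \Ga(f^{-1}T'N)$. Since $f$ is holomorphic and $N$ is K\"ahler, the pulled-back Levi-Civita connection $\na^f$ commutes with $J^N$; in particular, $\na^f_Z$ preserves the type decomposition for every $Z \in T^{\cc}M$. Combined with the fact that the curvature operator $R^N$ of a K\"ahler manifold also respects this decomposition, this implies that $\JJ_f$ preserves the splitting, so $\JJ_f(u) = 0$ forces $\JJ_f(u') = 0$. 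It therefore suffices to show that every such $(1,0)$-field in the kernel of $\JJ_f$ is holomorphic in the sense of \S\ref{subsec:inf-hol}, i.e., satisfies $\na^f_{\pa/\pa\bar z} u' = 0$.

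Next, I would introduce on $\Ga(f^{-1}T'N)$ the operator $\bar\pa u' := \na^f_{\pa/\pa\bar z} u' \otimes \dd\bar z$, together with its formal $L^2$-adjoint $\bar\pa^*$, and derive the identity
\begin{equation*}
\JJ_f(u') = 2\,\bar\pa^*\bar\pa\,u' \,.
\end{equation*}
The derivation expands $\Delta^f u' = -\Tr_g \na^f \na^f u'$ in local isothermal complex coordinates $z$ on $M$; using that $M$ is K\"ahler one rewrites the trace in terms of $\na^f_{\pa/\pa z}$ and $\na^f_{\pa/\pa\bar z}$, and computes the commutator $[\na^f_{\pa/\pa z}, \na^f_{\pa/\pa\bar z}] u' = R^f(\pa/\pa z, \pa/\pa\bar z) u'$. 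Because $f$ is holomorphic (so that $\pa f/\pa z$ lies in $f^{-1}T'N$ while $\pa f/\pa\bar z$ lies in $f^{-1}T''N$) and $N$ is K\"ahler, this commutator matches precisely the curvature contribution $-\Tr R^N(\dd f, u')\,\dd f$ appearing in $\JJ_f(u')$, so that after cancellation only the $\bar\pa^*\bar\pa$-term remains.

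Finally, taking the Hermitian inner product with $u'$ and integrating over the compact domain yields
\begin{equation*}
0 = \int_M \biginn{\JJ_f(u'), u'}^{\!\hh} \om_g = 2\int_M \bignorm{\bar\pa u'}^2 \om_g \,,
\end{equation*}
whence $\bar\pa u' = 0$ and $u'$ is holomorphic, as required. The main obstacle is the bookkeeping in the Weitzenb\"ock identity: one must use the K\"ahler hypothesis on both $M$ and $N$ together with the holomorphicity of $f$ at two distinct points --- first to ensure that $\na^f_{\pa/\pa\bar z}$ preserves $(1,0)$-fields (so that $\bar\pa$ is well defined on sections of $f^{-1}T'N$), and second to identify the curvature commutator with the Jacobi curvature term using the specific shape of $R^N$ on a K\"ahler target.
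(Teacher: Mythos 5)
The paper does not give a direct proof of this proposition; it simply cites Urakawa \cite{Ur} (Chapter 5, Theorem 3.2) and the authors' earlier papers, where the content of those references is precisely the Weitzenb\"ock-type identity you are aiming for, namely that the Jacobi operator of a holomorphic map between K\"ahler manifolds is a positive multiple of the $\bar\pa$-Laplacian $\bar\pa^*\bar\pa$ on $f^{-1}T'N$. So your proposal is a reconstruction of the cited result together with the routine integration-by-parts consequence, and the overall strategy is sound: prove $\JJ_f(u') = 2\,\bar\pa^*\bar\pa\,u'$ and pair with $u'$ over the compact domain. Two places in your sketch gloss over the genuine work, though.

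First, the cancellation of curvature terms is not a direct ``match.'' Expanding $\Delta^f u'$ produces $R^N(f_*\partial_z, f_*\partial_{\bar z})u'$ from the commutator $[\na^f_{\partial_z},\na^f_{\partial_{\bar z}}]$, whereas the Jacobi curvature term reduces (using the vanishing of $R^N$ on pairs of $(1,0)$-vectors, a consequence of the $(1,1)$-type of K\"ahler curvature) to $R^N(f_*\partial_{\bar z}, u')f_*\partial_z$. These two terms are \emph{not} identical; they become negatives of each other only after invoking the \emph{first Bianchi identity} together with the second application of the $(1,1)$-type vanishing $R^N(u', f_*\partial_z) = 0$. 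Your wording ``this commutator matches precisely the curvature contribution'' omits the Bianchi step and the double use of the type-restriction on $R^N$, which is where the K\"ahler hypothesis actually earns its keep. Relatedly, the assertion that ``$R^N$ respects the decomposition'' is not by itself enough to show the trace term $\Tr R^N(\dd f, v)\dd f$ preserves $(1,0)$- and $(0,1)$-fields: one must again use that $R^N$ vanishes on pairs of same-type arguments together with holomorphicity of $f$, so that, e.g., $\Tr R^N(\dd f, u')\dd f = 2 g^{z\bar z} R^N(f_*\partial_{\bar z},u')f_*\partial_z$, which is of type $(1,0)$ because $R^N(\cdot,\cdot)$ commutes with $J^N$. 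You can in fact bypass type-preservation of $\JJ_f$ entirely by pairing $\JJ_f(u)=0$ with $u$ and observing that only the $(1,0)$-part of $\JJ_f(u')$ contributes to $\int\langle \JJ_f(u),u\rangle$.

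Second, the proposition is stated for a general compact K\"ahler domain $M$, while your Weitzenb\"ock derivation is written with a single isothermal complex coordinate $z$, i.e.\ for $\dim_{\CC}M = 1$. The argument generalizes (take holomorphic normal coordinates $z^1,\dots,z^m$ and set $\bar\pa u' = \sum_j \na^f_{\partial/\partial\bar z^j}u'\otimes \dd\bar z^j$), and the surface case is all that is used in the paper, but as written the sketch does not prove the proposition at the stated generality. With these points filled in, the argument is correct and is essentially a self-contained version of the proof that the paper delegates to its references.
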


Note that, when $M^2$ is a Riemann surface, a vector field $u$ along $f$ is holomorphic if, in any local coordinate $z$ on $M^2$,
\begin{equation}
\na^f_{\pa/\pa\bar{z}} u' = 0 \,.
\end{equation}

Let $n \in \{1,2,\ldots\}$.  For a holomorphic map $f:M^2 \to \CP^n$ we have the following description. Let $F:U \to \CC^{n+1} \setminus \{\vec{0}\}$ be a holomorphic lift of $f$ defined on some open subset of $M^2$, so that $f = [F] = p \circ F$ where $p:\CC^{n+1} \setminus \{\vec{0}\} \to \CP^n$ is the natural projection.  Then a vector field $u$ along $f$ is given locally by $\dd p_F(\U)$ for some map $\U:U \to \CC^{n+1}$;  more invariantly, its $(1,0)$-part $u'$ is given by the section of $\Ll(F, \CC^{n+1}/F)$ defined by $F \mapsto \U \mod F$. The vector field $u$ is holomorphic if and only if we can choose $\U$ to be holomorphic.    If $f \in \Hol_d(S^2,\CP^3)$ we may take $F$ and $\U$ to have components polynomial of degree $\leq d$. 

\begin{prop}  \cite[Proposition 4.2]{LeWo2} \label{prop:int-hol}
Any holomorphic vector field $u$ along a holomorphic map $f:S^2 \to \CP^n$ is \emph{integrable by holomorphic maps}, i.e., there is a smooth one-parameter family of holomorphic maps $f_t:S^2 \to \CP^n$ with  $f_0 = f$ and $\pa f_t/\pa t|_{t=0} = u$.
\qed \end{prop}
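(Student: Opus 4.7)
The strategy is to realise $u$ as the tangent vector to an affine path of projective classes of polynomial $(n+1)$-tuples of bounded degree.

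Since $f$ has degree $d$, write $f = [F]$ on the affine chart $\CC \subset S^2$ with $F = (F_0,\ldots,F_n) \in \CC[z]_d^{n+1}$ coprime and of maximum degree exactly $d$; this ensures that $F$ has no common zero anywhere on $S^2$ (including at $\infty$, where it is guaranteed by the leading coefficients not all vanishing).  I then seek $\dot F \in \CC[z]_d^{n+1}$ such that $u(z) = \dd p_{F(z)}\bigl(\dot F(z)\bigr)$ for every $z$, and set $f_t := [F + t\dot F]$.  Given such a $\dot F$, the family $f_t$ works automatically: compactness of $S^2$ gives some $\ve > 0$ so that $F + t\dot F$ has no common zero and still has maximum degree $d$ for $|t| < \ve$, so $f_t \in \Hol_d(S^2,\CP^n)$ is a smooth family with $f_0 = f$, and a direct differentiation yields $\pa f_t/\pa t\vert_{t=0} = u$.

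The heart of the argument is producing such a polynomial lift $\dot F$ of degree $\le d$.  From \S\ref{subsec:inf-hol} we only know that $u'$ admits local holomorphic $\CC^{n+1}$-valued lifts $\U$.  To globalise, I would pull back the Euler exact sequence on $\CP^n$ by $f$ to obtain
\begin{equation*}
0 \longrightarrow \mathcal{O}_{S^2} \xrightarrow{\ \cdot F\ } \mathcal{O}_{S^2}(d)^{\oplus(n+1)} \longrightarrow f^{-1}T'\CP^n \longrightarrow 0,
\end{equation*}
take cohomology, and use $H^1(S^2,\mathcal{O}_{S^2})=0$; this yields the short exact sequence
\begin{equation*}
0 \longrightarrow \CC \longrightarrow \CC[z]_d^{n+1} \longrightarrow H^0\bigl(S^2, f^{-1}T'\CP^n\bigr) \longrightarrow 0,
\end{equation*}
whose surjectivity on the right produces the required $\dot F$ from $u'$, unique modulo the line $\CC\cdot F$.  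The same conclusion can be reached more elementarily by a \v{C}ech argument: pair a holomorphic lift $\U_0(z)$ on $\CC$ with a holomorphic lift on the chart $w = 1/z$ at $\infty$; on the overlap they differ by a scalar multiple of $F$, and the transition factor $z^d$ between the two lifts of $f$ controls the pole at $\infty$ of the global rational expression, forcing $\dot F$ to be polynomial of degree $\le d$ after absorbing the remaining multiples of $F$.

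The principal obstacle is this lifting step: showing that every holomorphic vector field along a degree-$d$ map to $\CP^n$ is represented by a polynomial $(n+1)$-tuple of degree $\le d$.  Equivalently, one needs the vanishing $H^1(S^2,\mathcal{O})=0$ (or the growth control at $\infty$ in the \v{C}ech version).  Once that is in place, the smoothness of $f_t$ in $t$, the verification that $f_t$ remains of degree $d$, and the identification of its $t$-derivative with $u$ are all routine.
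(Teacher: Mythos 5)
Your proof is correct and follows essentially the same route as the paper's (via \cite[Proposition 4.2]{LeWo2}): lift $u$ to a global polynomial tuple $\dot F$ of degree at most $d$, and integrate linearly by setting $f_t = [F + t\dot F]$, using compactness of $S^2$ to keep $F + t\dot F$ basepoint-free for small $|t|$. You have simply supplied the justification — via the pulled-back Euler sequence and $H^1(S^2,\mathcal O)=0$, or the equivalent \v{C}ech argument — for the polynomial-lifting fact that the paper states without proof in \S\ref{subsec:inf-hol}.
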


Indeed, we can take $u$ to be given by rational functions; it can then be integrated explicitly.

\begin{defn} \label{def:inf-HH}
Let $f:M^2 \to \CP^3$ be a smooth map which is horizontal. Say that a vector field $u$ along $f$ is an \emph{infinitesimal horizontal deformation} if it preserves horizontality to first order in the sense that, for any smooth one-parameter variation $\{f_t\}$ of $f$ tangent to $u$, the norm of the vertical component of $\dd\! f_t$ is $o(t)$.  If, additionally, $f$ and $u$ are holomorphic, we call $u$ an \emph{infinitesimal horizontal holomorphic deformation (IHHD) of\/ $f$}.
\end{defn}

N.\ Ejiri \cite[Sec.\ 2]{Ej-min} gives an equation for IHHDs which shows that this notion is independent of the choice of $\{f_t \}$.

\smallskip

By differentiating the formula \eqref{Q}, we see that a holomorphic vector field $u = \dd p_F(\U)$ along a holomorphic map $f = p \circ F$ is an IHHD if
 \begin{equation} \label{inf-hor}
\dd Q_F(\U) \equiv \{\U_0,F_1\} + \{F_0,\U_1\} + \{\U_2, F_3\} + \{F_2, \U_3\} = 0 \,.
\end{equation}

Now the differential $\dd Q_F$ is a linear map from $\CC[z]_d^4$ to $\CC[z]_{2d-2}$, and the space of solutions to \eqref{inf-hor} has dimension $\dim\ker\dd Q_F = 4d+4 - \dim\image\;\dd Q_F$.  Since $\dim\image\;\dd Q_F \leq 2d-1$ this is at least $2d+5$.   One solution is $\U = F$ which projects to $u=0$, so the dimension of the space of IHHDs is at least $2d+4$.

We now consider what we can say about $\HHol_d(S^2,\CP^3)$.  
J.-L.\ Verdier \cite{Ve2} showed that (i) $\HHol_d(S^2,\CP^3)$ is a connected algebraic variety of pure dimension $2d+4$;
(ii) when $d=1,2$,  $\HHol_d(S^2,\CP^3) = \HHol_d^{\nf}(S^2,\CP^3)$, the subspace of non-full maps, and this is irreducible; (iii) when $d \geq 3$, $\HHol_d(S^2,\CP^3)$ has the two irreducible components $\ov{\HHol_d^{\ff}(S^2,\CP^3)}$ and $\HHol_d^{\nf}(S^2,\CP^3)$.  

\begin{defn} \label{def:regular-ho}  
A point $f = [F] \in \HHol_d(S^2,\CP^3)$ is called \emph{(a) regular (point of\/ $Q$)} if $Q$ is submersive at $F$; this condition is clearly independent of the representative $F$ chosen.  
\end{defn}

By the Inverse Function Theorem, if $f$ is regular, then it is a \emph{smooth point} of $\HHol_d(S^2,\CP^3)$, i.e., a point where that algebraic variety is a smooth manifold.  The converse holds for non-full maps, see Proposition \ref{prop:jn-pt}.   The following shows that there are regular points in each irreducible component, thus confirming that $\HHol_d(S^2,\CP^3)$ has pure dimension $2d+4$.

\begin{lem} \label{lem:Q-subm}
The following maps are regular:

{\rm (i)} $f = [F] \in \HHol_d^{\ff}(S^2,\CP^3)$, where\\[-3ex]
	$$F(z) = [z^{d-1}-1\,,\, -d\,z/(d-2)\,,\, z^d - d\,z/(d-2)\,,\, 1] \quad (d \geq 3) \,;$$

{\rm (ii)} $f = [F] \in \HHol_d^{\nf}(S^2,\CP^3)$, where $F(z) = [z^d,0,1,0]$ \ $(d \geq 1)$.
\end{lem}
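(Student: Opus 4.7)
The regularity of $f = [F] \in \HHol_d(S^2,\CP^3)$ is, by Definition \ref{def:regular-ho}, exactly the surjectivity of the differential
\[ \dd Q_F \colon \CC[z]_d^4 \longrightarrow \CC[z]_{2d-2} \,, \qquad \dd Q_F(\U) = \{\U_0,F_1\} + \{F_0,\U_1\} + \{\U_2,F_3\} + \{F_2,\U_3\} \]
of the quadratic map $Q$ between affine spaces (cf.\ \eqref{inf-hor}). Since the target has dimension $2d-1$, my plan in each of (i) and (ii) is to exhibit explicit inputs $\U$ whose outputs span $\CC[z]_{2d-2}$.

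For (ii), $F = (z^d,0,1,0)$ makes the formula collapse to
\[ \dd Q_F(\U) \;=\; \{z^d,\U_1\} + \{\U_2,1\} + \{1,\U_3\} \;=\; d\,z^{d-1}\U_1 - z^d\U_1' + \U_2' - \U_3'. \]
Taking $\U = (0,0,\U_2,0)$ with $\U_2$ ranging over $\CC[z]_d$ makes $\U_2'$ range over all of $\CC[z]_{d-1}$; taking $\U = (0,z^k,0,0)$ for $k = 1,\dots,d-1$ yields $(d-k)\,z^{d+k-1}$, a nonzero multiple of $z^{d+k-1}$. These two families contribute $d + (d-1) = 2d-1$ polynomials whose leading degrees cover $0,1,\dots,2d-2$ without repetition, and so span $\CC[z]_{2d-2}$.

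For (i), the plan is the same but preceded by a short polynomial calculation confirming the horizontality $Q(F) = 0$; the coefficients $d/(d-2)$ are chosen precisely so that the contributions $\{F_0,F_1\}$ and $\{F_2,F_3\}$ cancel term by term. Since $F_3 = 1$ is still in force, setting $\U = (0,0,\U_2,0)$ again produces $\U_2' \in \CC[z]_{d-1}$ arbitrarily. Next, with $\U = (0,0,0,z^k)$ for $k = 1,\dots,d-1$, the differential reduces to $\{F_2,z^k\}$, whose leading term is $(d-k)\,z^{d+k-1}$; these $d-1$ outputs have pairwise distinct leading degrees $d,d+1,\dots,2d-2$, hence are linearly independent modulo $\CC[z]_{d-1}$. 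Adjoining them to the image from $\U_2$ produces $2d-1$ independent vectors in $\CC[z]_{2d-2}$, proving surjectivity.

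The only piece of real bookkeeping is the horizontality identity in (i), where one must follow the cancellations between the four $\{G_i,G_j\}$-terms carefully, paying attention to the signs in the statement of the lemma. Beyond that, the whole argument rests on the elementary observation that polynomials with pairwise distinct leading degrees are linearly independent. Notice that in both parts the components $\U_0$ and (respectively) $\U_3$ or $\U_1$ are never needed: $\U_2$ alone already sweeps out the low-degree half of the target, and a single extra free monomial in $\U_1$ (for (ii)) or $\U_3$ (for (i)) takes care of the high-degree monomials. In particular, this makes clear that regularity is achieved by a direct construction, with no appeal to the dimension bounds on $\HHol_d(S^2,\CP^3)$ beyond the trivial one on $\dim\CC[z]_{2d-2}$.
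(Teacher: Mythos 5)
Your method --- establishing surjectivity of $\dd Q_F$ by exhibiting explicit directions $\U$ whose images have distinct leading degrees --- is exactly the paper's strategy, but your execution has two concrete problems.

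In case (ii), the expansion of $\dd Q_F$ is wrong. With $F = (z^d, 0, 1, 0)$ one has $F_1 = F_3 = 0$ and $F_2 = 1$, so from \eqref{inf-hor}
$$
\dd Q_F(\U) = \{\U_0, 0\} + \{z^d, \U_1\} + \{\U_2, 0\} + \{1, \U_3\} = \{z^d, \U_1\} - \U_3'\,;
$$
there is no $\{\U_2,1\} = \U_2'$ term, since $\U_2$ pairs with $F_3 = 0$, not with $F_2$. Your family $\U = (0,0,\U_2,0)$ therefore contributes nothing. The fix is easy: use $\U = (0,0,0,\U_3)$ instead, which gives $-\U_3'$ and sweeps out $\CC[z]_{d-1}$; together with $\U = (0,z^k,0,0) \mapsto (d-k)\,z^{d+k-1}$ for $k = 1,\dots,d-1$, this closes the argument.

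In case (i), you claim without computation that ``the contributions $\{F_0,F_1\}$ and $\{F_2,F_3\}$ cancel term by term.'' They do not: with $F$ as printed one finds $\{F_0,F_1\} = -d\,z^{d-1} - d/(d-2)$ and $\{F_2,F_3\} = d\,z^{d-1} - d/(d-2)$, so $Q(F) = -2d/(d-2) \neq 0$ and the stated $F$ is not horizontal, hence not in $\HHol_d^{\ff}(S^2,\CP^3)$ at all. This is evidently a sign typo in the lemma --- replacing $F_0$ by $z^{d-1}+1$ makes $Q(F) = 0$ --- but the very check you announced, and then skipped, would have revealed it. Granting the corrected $F$, your surjectivity argument for (i), using $\{\U_2,1\} = \U_2'$ for degrees $0,\dots,d-1$ and $\{F_2, z^k\}$ with leading term $(d-k)\,z^{d+k-1}$ for degrees $d,\dots,2d-2$, is correct, and is in fact more robust than the paper's displayed choice, which uses $\{F_0,\U_1\}$ for the high-degree range but, as printed, cannot produce any element with a nonzero $z^{2d-3}$ coefficient.
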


\begin{proof}
(i) We calculate $\dd Q_F(0,0,\U_2,0) = \{\U_2,1\} = \U_2'$; on putting $\U = z,\ldots, z^d$, this gives multiples of $1,\ldots, z^{d-1}$, respectively.
On the other hand, $\dd Q_F(\U_0,0,0,0) = \{z^{d-1}-1, \U_1 \}$ on putting $\U_1 = 1,\ldots, z^{d-2}$, this gives polynomials of order $d-2$, \ldots, $2d-1$, respectively.  
It is clear from this that $\dd Q_F$ is surjective.   (ii) This is similar.
\end{proof}

Let $f:M^2 \to \CP^3$ be horizontal and holomorphic.  Say that an IHHD $u$ along $f$ is \emph{integrable by horizontal holomorphic maps} if it is tangent to a deformation of $f$ by horizontal holomorphic maps,  i.e., $u = \pa f_t/\pa t|_{t=0}$ for some one-parameter family of horizontal holomorphic maps $f_t:S^2 \to \CP^3$ with $f_0 = f$.   

\begin{prop} \label{prop:smooth}
Let $f = [F] \in \HHol_d(S^2,\CP^3)$. Then the following are equivalent:

{\rm (i)} $f$ is a regular point;

{\rm (ii)} the dimension of the space of IHHDs at $f$ is $2d+4$;

{\rm (iii)} all IHHDs of $f$ are integrable by horizontal holomorphic maps.
\end{prop}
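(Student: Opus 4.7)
\smallskip

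\noindent\textbf{Plan.} The plan is first to identify the space of IHHDs at $f$ with the quotient $\ker\dd Q_F/\CC\cdot F$ inside $\CC[z]_d^4$, and then to establish the three implications in turn.

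\emph{Identification and} (i)$\Leftrightarrow$(ii). By the discussion preceding Proposition \ref{prop:int-hol}, any holomorphic vector field $u$ along $f$ has the form $u=\dd p_F(\U)$ for some $\U\in\CC[z]_d^4$, and the infinitesimal horizontality condition \eqref{inf-hor} for $u$ reads $\dd Q_F(\U)=0$. Since $F\in V_d$ is coprime with maximum component degree $d$, the rational function $\lambda$ determined by $\U=\lambda F$ (whenever $\U$ is pointwise parallel to $F$) is regular everywhere and of degree $\leq 0$, hence constant; so the kernel of $\U\mapsto u$ is exactly $\CC\cdot F$ (and $F\in\ker\dd Q_F$ by the homogeneity $Q_d(cF)=c^2Q_d(F)$). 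Consequently
\[
\dim\{\text{IHHDs at }f\}=\dim\ker\dd Q_F-1=4d+3-\mathrm{rank}(\dd Q_F).
\]
The codomain $\CC[z]_{2d-2}$ has dimension $2d-1$, so $\dd Q_F$ is surjective iff $\mathrm{rank}(\dd Q_F)=2d-1$, proving (i)$\Leftrightarrow$(ii).

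\emph{(i)$\Rightarrow$(iii).} Suppose $\dd Q_F$ is surjective. The holomorphic Implicit Function Theorem shows that $Q_d^{-1}(0)$ is a smooth complex submanifold of $\CC[z]_d^4$ near $F$, with tangent space $\ker\dd Q_F$ at $F$. For an IHHD lifted to $\U\in\ker\dd Q_F$, a smooth path $F_t\subset Q_d^{-1}(0)$ with $F_0=F$ and $\dot F_0=\U$ exists. Since $V_d$ is open in $P(\CC[z]_d^4)$, $[F_t]\in\HHol_d(S^2,\CP^3)$ for small $t$, giving a horizontal holomorphic deformation tangent to the given IHHD.

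\emph{(iii)$\Rightarrow$(i).} This is the main obstacle. Here I combine Verdier's purity with a tangent-cone argument. By Verdier's theorem, $\HHol_d(S^2,\CP^3)$ has pure dimension $2d+4$, so the preimage of its intersection with $V_d$ in $\CC[z]_d^4$ has pure dimension $2d+5$ at $F$; consequently the affine tangent cone $C_F(Q_d^{-1}(0))$ has algebraic dimension $2d+5$ and sits inside the Zariski tangent space $\ker\dd Q_F$. For any $C^\infty$ path $F_t\subset Q_d^{-1}(0)$ with $F_0=F$, the sequence $\lambda_n(F_{t_n}-F)\to\dot F_0$ (with $\lambda_n=1/t_n$, $t_n\to 0^+$) exhibits $\dot F_0\in C_F(Q_d^{-1}(0))$. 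Under (iii), every $\U\in\ker\dd Q_F$ arises this way modulo $\CC\cdot F$, and $F$ itself lies in $C_F$ via the path $F_t=(1+t)F$; hence $\ker\dd Q_F\subseteq C_F(Q_d^{-1}(0))$. Combined with the reverse inclusion from the Zariski tangent space, this forces $\ker\dd Q_F=C_F(Q_d^{-1}(0))$, whence $\dim\ker\dd Q_F=2d+5$, $\mathrm{rank}(\dd Q_F)=2d-1$, and $f$ is regular. The decisive ingredient is Verdier's pure-dimensionality result: without it the dimension of the tangent cone is not controlled and the final dimension count fails.
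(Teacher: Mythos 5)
Your proof is correct in substance and follows essentially the same route as the paper: the identification of IHHDs with $\ker\dd Q_F/\CC F$ and the linear-algebra count for (i)$\Leftrightarrow$(ii), the implicit function theorem for (i)$\Rightarrow$(iii), and Verdier's pure-dimensionality result for (iii)$\Rightarrow$(i). What you add is a careful tangent-cone formulation of the paper's terse closing sentence (``not all the IHHDs in that space can be integrable''); working in the affine cone $\CC[z]_d^4$ rather than projectively is a harmless reformulation.

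There is one small gap in the (iii)$\Rightarrow$(i) step. You observe that every $\U\in\ker\dd Q_F$ arises as $\dot F_0$ for a path in $Q_d^{-1}(0)$ only \emph{modulo} $\CC\cdot F$, and then conclude $\ker\dd Q_F\subseteq C_F(Q_d^{-1}(0))$ by also noting $F\in C_F$. This implicitly treats the tangent cone as a vector space: from $\U+\lambda F\in C_F$ and $F\in C_F$ you cannot in general deduce $\U\in C_F$, since a tangent cone is closed under scaling but not under addition. The fix is immediate, however: given a lifted path $F_t\in Q_d^{-1}(0)$ with $F_0=F$ and $\dot F_0=\U+\lambda F$, replace it by $\eu^{-\lambda t}F_t$, which still lies in $Q_d^{-1}(0)$ by the homogeneity of $Q_d$ and now has derivative exactly $\U$ at $t=0$, so $\U\in C_F$ directly. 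With this adjustment, your dimension count $\dim\ker\dd Q_F=\dim C_F(Q_d^{-1}(0))=2d+5$ goes through and (iii)$\Rightarrow$(i) is established.
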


\begin{proof}
If  $f$ is a regular point then, as above, the space of IHHDs has dimension $2d+4$, and so coincides with the tangent space to  $\HHol_d(S^2,\CP^3)$ at $f$.  Hence all IHHDs are integrable.

Conversely, if $f$ is not regular, then the space of IHHDs at $f$ has dimension more than $2d+4$.  Since  $\HHol_d(S^2,\CP^3)$ is an algebraic variety of dimension $2d+4$, not all the IHHDs in that space can be integrable.
\end{proof}

\begin{cor} \label{cor:smooth}
If\/ $f$ is a non-smooth point of\/ $\HHol_d(S^2,\CP^3)$, then there are IHHDs of $f$ which are not integrable by horizontal holomorphic maps. \qed
\end{cor}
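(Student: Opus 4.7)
The plan is to prove the contrapositive: if every IHHD along $f$ is integrable by horizontal holomorphic maps, then $f$ is a smooth point of $\HHol_d(S^2,\CP^3)$. This reduces the corollary to a direct application of Proposition \ref{prop:smooth} combined with the observation (made after Definition \ref{def:regular-ho}) that regular points of $Q$ are smooth points of the variety.

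First, I would invoke the equivalence (i) $\Leftrightarrow$ (iii) of Proposition \ref{prop:smooth}: assuming every IHHD at $f$ is integrable by horizontal holomorphic maps, we deduce that $f$ is a regular point of $Q$, i.e., $\dd Q_F:\CC[z]_d^4 \to \CC[z]_{2d-2}$ is surjective at a polynomial representative $F$ of $f$. Second, I would apply the Inverse Function Theorem at $F$: surjectivity of $\dd Q_F$ implies that the algebraic variety $Z(Q_d) = \{[F'] \in P(\CC[z]_d^4) : Q_d(F') = 0\}$ is, near $[F]$, a smooth submanifold of $P(\CC[z]_d^4) \cong \CP^{4d+3}$ of the expected codimension $\dim \image \dd Q_F$. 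Since $[F]$ lies in the open subset $V_d \subset P(\CC[z]_d^4)$ and the identification $i:\HHol_d(S^2,\CP^3) \to Z(Q_d) \cap V_d$ is a homeomorphism, we conclude that $f$ is a smooth point of $\HHol_d(S^2,\CP^3)$, contradicting the hypothesis.

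There is no real obstacle here: the corollary is essentially a repackaging of Proposition \ref{prop:smooth} via the one-way implication ``regular $\Rightarrow$ smooth.'' The only point worth a brief word is that the notion of regularity is defined through the polynomial map $Q_d$ on lifts, so one should note that passing from $f$ to a representative $F$, and between variations of $f$ and variations of $F$, is harmless since $Q$ descends from $\CC[z]_d^4$ to $P(\CC[z]_d^4)$ and the space $V_d$ of coprime quadruplets of maximum degree $d$ is a dense open subset.
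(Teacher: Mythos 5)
Your proof is correct and follows exactly the reasoning the paper intends (the \qed signals that the corollary is immediate from Proposition \ref{prop:smooth} together with the remark after Definition \ref{def:regular-ho} that regular points are smooth points); you have simply written out the contrapositive chain explicitly. The remark about descending $Q$ from $\CC[z]_d^4$ to the projective space when invoking the Inverse Function Theorem is a nice touch but adds nothing beyond what the paper already notes.
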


\subsection{The twistor correspondence}

Let $\Harm(S^2,S^4)$ denote the set of harmonic maps from the $2$-sphere $S^2$ to $S^4$, equipped with the compact-open topology. A map to $S^4$ is called \emph{full} if it does not have image in a totally geodesic (i.e., equatorial) $S^3 \subset S^4$.

The twistor projection \eqref{twistor-S4} gives the following well-known correspondence beween horizontal holomorphic maps into the twistor space and harmonic maps into $S^4$, see, for example \cite{EeSal,Sal}; we shall generally use the $\CP^3$-model \eqref{twistor-CP3} for the twistor bundle.

Given a smooth map $\phi:M^2 \to S^4$, as before, write $\varPhi = i \circ \phi$ where $i:S^4 \to \RR^5$ is the canonical inclusion.  For any local complex coordinate $z$ on $U \subset M^2$, define a function $W(\phi):U \to \CC$ by
$$
\varPhi \wedge \frac{\pa\varPhi}{\pa z} \wedge \frac{\pa^2\varPhi}{\pa z^2}
	\wedge \frac{\pa\varPhi}{\pa\bar{z}} \wedge \frac{\pa^2\varPhi}{\pa\bar{z}^2} = W(\phi) \,\omega
	\,.
$$
Here $\omega \in \wedge^5 \CC^5$ is the volume form of $\RR^5$ with its standard orientation.  
Then, for any $x \in U$, the number $W(\phi)(x)$ is real; its sign is clearly independent of the choice of complex coordinate and is called the \emph{spin of\/ $\phi$ at $x$}.   If $M^2 = S^2$ and $\phi$ is harmonic, $W(\phi)$ has fixed sign on $M^2$ and we have three cases  \cite{Ve2}:

(i) $\phi$ is not full and its spin is identically zero, then $\phi$ has image in a totally geodesic (equatorial) $S^2\subset S^4$: we say that $\phi$ has \emph{zero spin};

(ii) $\phi$ is full and its spin is strictly positive, except at isolated points where it is zero: we say that $\phi$ has \emph{positive spin};

(iii) $\phi$ is full and its spin is strictly negative, except at isolated points where it is zero: we say that $\phi$ has \emph{negative spin}.

Thus the space $\Harm(S^2,S^4)$ is the union of the three disjoint spaces $\Harm^{\nf}(S^2,S^4)$, $\Harm^+(S^2,S^4)$ and $\Harm^-(S^2,S^4)$ of harmonic maps of zero, positive and negative spin, respectively.    Write $\Harm^{\geq 0}(S^2,S^4) = \Harm^{\nf}(S^2,S^4) \cup \Harm^+(S^2,S^4)$.  Note that composing $\phi$ with the antipodal map $S^4 \to S^4$, \ $x \mapsto -x$, changes the sign of its spin and defines a homeomorphism between $\Harm^+(S^2,S^4)$ and $\Harm^-(S^2,S^4)$.  In particular, for any harmonic map $\phi:S^2 \to S^4$, one of $\pm\phi$ lies in $\Harm^{\geq 0}(S^2,S^4)$.

The real isotropy \eqref{isotropy} of a harmonic map from $S^2$ to $S^4$ tells us that, at each point $x \in S^2$ where it is non-zero, $\ds\frac{\pa\varPhi}{\pa\bar{z}} \wedge \frac{\pa^2\varPhi}{\pa\bar{z}^2}$ defines an isotropic subspace $P$ of $T^{\cc}_x S^4$.   Note that \emph{$\phi$ is of positive (resp.\ negative) spin if and only if\/ $P$ is a positive (resp.\ negative) isotropic subspace} (see \S \ref{subsec:twistor-S4} above for the definition of positivity).

\begin{prop} \label{prop:twistor-bij}
The map $f \mapsto \phi = \pi \circ f$ defines homeomorphisms:
\begin{eqnarray}
\Pi^{\hor}:\HHol(S^2,\CP^3) \to \Harm^{\geq 0}(S^2,S^4) \label{Pi-hor} \,, \\
\Pi^{\hor, \ff}:\HHol^{\ff}(S^2,\CP^3) \to \Harm^+(S^2,S^4) \label{Pi-hor-ff} \,, \\
\Pi^{\hor, \nf}:\HHol^{\nf}(S^2,\CP^3) \to \Harm^{\nf}(S^2,S^4) \label{Pi-hor-nf} \,. 
\end{eqnarray}

Further, the energy of\/ $\phi$ is equal to $d$ times the area of\/ $S^2$ $= 4\pi d$ where $d \in \{0,1,2,\ldots\}$ is the degree of $f$.  
\end{prop}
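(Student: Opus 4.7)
The plan is to realise $\Pi^{\hor}$ as composition with $\pi$ and to build its inverse via the classical \emph{twistor lift}, obtained from the positive isotropic $2$-plane in $T^{\cc}S^4$ defined by the $\bar\pa$-derivatives of $\varPhi$. I would first verify that $\Pi^{\hor}$ is well-defined: given $f \in \HHol(S^2,\CP^3)$, the composition $\phi = \pi \circ f$ is harmonic, weakly conformal and of spin $\geq 0$. Harmonicity is a standard composition-rule computation using that $\pi$ is a Riemannian submersion (up to scale) with totally geodesic fibres, that $f$ is horizontal and holomorphic into a K\"ahler manifold, and that the trace of the second fundamental form of $\pi$ against $df \otimes df$ vanishes by the antisymmetry of $\Aa^{\Hh'}$ in Lemma~\ref{lem:sff}(i). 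Weak conformality is immediate from horizontality, since $d\pi|_{\Hh'}$ is a conformal isomorphism onto an isotropic subspace of $T^{\cc}S^4$. Non-negativity of the spin is built into the choice of the \emph{positive} twistor space $\Si^+(S^4) \cong \CP^3$.

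Next, I would construct the inverse. Given $\phi \in \Harm^{\geq 0}(S^2,S^4)$, real isotropy~\eqref{isotropy} guarantees that wherever $\pa\varPhi/\pa\bar z \wedge \pa^2\varPhi/\pa\bar z^2 \ne 0$ the span $P_x$ of these two vectors is a positive isotropic $2$-plane in $T^{\cc}_{\phi(x)}S^4$, and hence, by~\eqref{twistor-proj-T}, determines a unique point $\tilde f(x) \in \Tt_{2,5} \cong \CP^3$ with $\pi(\tilde f(x)) = \phi(x)$. The remaining relations~\eqref{isotropy} translate into $\tilde f$ being horizontal and holomorphic on the open dense set $\Om \subset S^2$ on which it is defined. \emph{The main obstacle is extending $\tilde f$ across the finite set $S^2 \setminus \Om$ of branch points of $\phi$.} Since $\tilde f$ takes values in the compact complex manifold $\CP^3$, this follows from Riemann's removable singularity theorem applied in a local projective chart: taking a holomorphic lift of $\tilde f$ into $\CC^4 \setminus \{\vec 0\}$ on a punctured disc and clearing common polynomial factors produces a bounded holomorphic map, which extends across the puncture. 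Horizontality is preserved by continuity of~\eqref{horizontal}, and $\pi \circ \tilde f = \phi$ by density. Injectivity of $\Pi^{\hor}$ is then automatic, since on the set where $d\phi \ne 0$ any horizontal holomorphic lift is forced to agree with $\tilde f$.

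Continuity of $\Pi^{\hor}$ and of its inverse follows from the compact-open topology combined with the algebraic parametrisation of $\HHol_d(S^2,\CP^3)$ in \S\ref{subsec:hol-twistor}, since the coprime polynomial coefficients of $\tilde f$ depend continuously on $\phi$. The full/non-full correspondence reduces to a direct verification using~\eqref{twistor-proj-T} that $f$ has image in a proper projective subspace of $\CP^3$ if and only if $\phi$ has image in a proper totally geodesic subsphere of $S^4$; combined with the spin classification, this splits~\eqref{Pi-hor} into~\eqref{Pi-hor-ff} and~\eqref{Pi-hor-nf}. Finally, horizontality of $f$ gives $|d\phi|^2 = |df|^2$ pointwise, so $E(\phi) = E(f)$; and for a degree-$d$ holomorphic map $f:S^2 \to \CP^3$ into the K\"ahler manifold $\CP^3$, $E(f) = \int_{S^2}f^*\omega_{\CP^3} = 4\pi d$ in the chosen Fubini--Study normalisation.
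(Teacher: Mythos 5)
Your plan follows the same overall route as the paper: realise $\Pi^{\hor}$ as post-composition with $\pi$, verify well-definedness via the composition rule for a horizontal holomorphic map into a K\"ahler twistor space, and construct the inverse by taking the positive isotropic $2$-plane spanned by $\pa\varPhi/\pa\bar z$ and $\pa^2\varPhi/\pa\bar z^2$ and viewing it as a map into $\Tt_{2,5} \cong \CP^3$. However, there are two places where your argument is imprecise enough to count as gaps.

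First, the extension of $\tilde f$ across the branch points. You invoke ``Riemann's removable singularity theorem'' for a holomorphic map from a punctured disc into $\CP^3$, appealing to boundedness (automatic for a compact target). This reasoning is \emph{not} valid as stated: $z \mapsto [1,\eu^{1/z}]$ is a bounded holomorphic map from a punctured disc to $\CP^1$ that does not extend, so compactness of the target plus boundedness does not suffice. What actually makes the extension work --- and what the paper uses --- is that the data defining the lift are smooth (indeed real analytic) on the \emph{whole} disc and only their span degenerates at isolated points. Concretely, the paper observes that $\pa\phi/\pa z$ is a holomorphic section of $\phi^{-1}T^{\cc}S^4$ (via harmonicity and the Koszul--Malgrange structure), so near a branch point it equals $z^k$ times a non-vanishing holomorphic section; hence the complex line it spans, and likewise the $2$-plane $P$, extend smoothly. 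Your phrase ``clearing common polynomial factors'' gestures at this, but the lift of $\tilde f$ into $\CC^4$ on a punctured disc is a priori just holomorphic, not polynomial, and without the section-level observation there is no reason its components share a common factor $z^k$. You need that intermediate lemma to make the extension step correct.

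Second, the homeomorphism claim. You assert continuity of the inverse because ``the coprime polynomial coefficients of $\tilde f$ depend continuously on $\phi$,'' but this is essentially a restatement of the conclusion, not an argument. The paper handles this via properness: given a sequence $\phi_n \to \phi$ in the image, the corresponding lifts $f_n$ live in the compact space $\CP^{4d+3}$ so subconverge; one then checks the limit (after accounting for possible degree drop through common polynomial factors) must be the lift of $\phi$ by uniqueness (Lemma~\ref{lem:unique}). A continuous proper bijection onto a locally compact Hausdorff space is a homeomorphism. You should either supply that compactness/uniqueness argument or cite it. The remaining parts of your proposal --- well-definedness of $\Pi^{\hor}$, the energy formula $E(\phi)=E(f)=4\pi d$, and the full/non-full dichotomy --- are correct and match the paper.
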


Since they are induced by the twistor projection \eqref{twistor-CP3}, we shall also call the above maps or their restrictions \emph{twistor projections}.  The integer $d$ is called \cite{EeSal} the \emph{twistor degree} of $\phi$.  Since it is weakly conformal, \emph{a harmonic map $S^2 \to S^4$ with twistor degree $d$ has both energy and area equal to $4\pi d$.}
Now, the components of the space $\HHol(S^2,\CP^3)$ are the spaces $\HHol_d(S^2,\CP^3)$ of horizontal holomorphic maps of degree $d$ \ $(d = 0,1,2,\ldots)$.  It follows that the spaces $\Harm_d(S^2,S^4)$ of (full and non-full) harmonic maps from $S^2$ to $S^4$ of twistor degree $d$ \ $(d = 0,1,2,\ldots)$ form the connected components of $\Harm(S^2,S^4)$  \cite{Lo-S4,Ve3}. 

For any $d$, the space \,$\Harm_d(S^2,S^4)$\, is the union of the three disjoint subspaces \,$\Harm^{\nf}_d(S^2,S^4)$, \,$\Harm^+_d(S^2,S^4)$\, and \,$\Harm^-_d(S^2,S^4)$\, of harmonic maps of zero, positive and negative spin, respectively.   We write \,$\Harm_d^{\geq 0}(S^2,S^4) = \Harm^{\nf}_d(S^2,S^4) \cup \Harm^+_d(S^2,S^4)$\,, then we have

\begin{cor} \label{cor:twistor-bij}
For each\/ $d \in \{1,2,\ldots\}$, the maps \eqref{Pi-hor}---\eqref{Pi-hor-nf} restrict to homeomorphisms
\begin{eqnarray}
\Pi_d^{\hor}:\HHol_d(S^2,\CP^3) \to \Harm_d^{\geq 0}(S^2,S^4) \label{Pi-hor-d} \,, \\
\Pi_d^{\hor, \ff}:\HHol_d^{\ff}(S^2,\CP^3) \to \Harm_d^+(S^2,S^4) \label{Pi-hor-ff-d}\,, \\
\Pi_d^{\hor, \nf}:\HHol_d^{\nf}(S^2,\CP^3) \to \Harm_d^{\nf}(S^2,S^4) \label{Pi-hor-nf-d}\,.
\end{eqnarray}
In particular, $\Harm_d^{\ff}(S^2,S^4)$ is non-empty if and only if\/ $d \geq 3$.
\end{cor}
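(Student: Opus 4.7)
The plan is to derive this corollary as a direct restriction of Proposition \ref{prop:twistor-bij} to components indexed by twistor degree, together with the explicit examples from Lemma \ref{lem:Q-subm} for the non-emptiness assertion.

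First, I would note that the paragraph immediately preceding the corollary already identifies $\HHol_d(S^2,\CP^3)$ and $\Harm_d(S^2,S^4)$ as the connected components of $\HHol(S^2,\CP^3)$ and $\Harm(S^2,S^4)$ respectively, indexed by degree and by twistor degree. Since Proposition \ref{prop:twistor-bij} asserts that, for $f \in \HHol(S^2,\CP^3)$, the energy of $\phi = \pi \circ f$ is $4\pi d$ where $d = \deg f$, and since twistor degree is defined by this same energy formula, the global homeomorphism $\Pi^{\hor}$ takes degree to twistor degree. It therefore restricts to a bijection $\HHol_d(S^2,\CP^3) \to \Harm_d^{\geq 0}(S^2,S^4)$ which, being the restriction of a homeomorphism to matched subspaces, is itself a homeomorphism; this yields \eqref{Pi-hor-d}. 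The same argument applied to $\Pi^{\hor,\ff}$ and $\Pi^{\hor,\nf}$ yields \eqref{Pi-hor-ff-d} and \eqref{Pi-hor-nf-d}, since fullness and non-fullness are preserved on each side of the correspondence by construction.

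For the final assertion, I would argue via the homeomorphism \eqref{Pi-hor-ff-d} that it is enough to show $\HHol_d^{\ff}(S^2,\CP^3)$ is empty for $d \leq 2$ and non-empty for $d \geq 3$, and then to handle $\Harm_d^-$ separately. The first point was observed already in Subsection \ref{subsec:hol-twistor}: a coprime quadruplet of polynomials of maximum degree $d \leq 2$ lies in a space of dimension at most $3$, so its components cannot be linearly independent, whereas Lemma \ref{lem:Q-subm}(i) exhibits an explicit full horizontal holomorphic map for every $d \geq 3$. This shows $\Harm_d^+(S^2,S^4)$ is non-empty iff $d \geq 3$. Composition with the antipodal map $x \mapsto -x$ on $S^4$ is an isometry, hence preserves harmonicity and twistor degree, but reverses spin, and so defines a homeomorphism $\Harm_d^+(S^2,S^4) \to \Harm_d^-(S^2,S^4)$; therefore $\Harm_d^-$ is non-empty iff $d \geq 3$ as well. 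Since $\Harm_d^{\ff}(S^2,S^4) = \Harm_d^+(S^2,S^4) \cup \Harm_d^-(S^2,S^4)$, the conclusion follows.

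There is no substantive obstacle here: the corollary is essentially bookkeeping, combining the global homeomorphism of Proposition \ref{prop:twistor-bij} with the decomposition into connected components and the explicit examples in Lemma \ref{lem:Q-subm}. The only point worth making carefully is the identification of the degree of $f$ with the twistor degree of $\pi \circ f$, which is immediate from the definition of twistor degree via the energy.
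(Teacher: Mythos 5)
Your proposal is correct and follows essentially the same route the paper sketches: the corollary is obtained by restricting the global homeomorphisms of Proposition \ref{prop:twistor-bij} to the connected components indexed by degree (using the energy formula to match degree with twistor degree), combined with the fullness-preserving property and the earlier observation plus Lemma \ref{lem:Q-subm} for the non-emptiness statement. The only cosmetic point is that the paper defines the twistor degree directly as the degree of the twistor lift (from which the energy formula is then \emph{derived}), whereas you phrase it as ``defined via the energy formula''; this does not affect the argument since both formulations give the same identification of degree with twistor degree under $\Pi^{\hor}$.
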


It follows that, for any $d \geq 3$, the spaces $\Harm^+_d(S^2,S^4)$\, and \,$\Harm^-_d(S^2,S^4)$ are connected, and are the components of \,$\Harm^{\ff}_d(S^2,S^4)$.

For later use, we sketch the proof of the proposition and corollary.   Given $\phi \in \Harm_d^{\geq 0}(S^2,S^4)$, we define its twistor lift $f:S^2 \to \Si^+ = \Si^+(S^4)$ as follows.  At a point $x \in S^2$ where $\phi$ is immersive, set 
$\tau_x M = \dd\phi_x(T_xS^2)$ with orientation chosen such that $\dd\phi_x$ is orientation preserving, and set
\begin{equation}  \label{twistor-lift}
f(x) = \left\{\begin{array}{l}
\!\!\text{\rm the point of $\Si^+_{\phi(x)}$ representing the unique} \\
\!\!\text{\rm positive almost Hermitian structure $J^f_x$ on $T_{\phi(x)}S^4$} \\
\!\!\text{\rm which is rotation through $+\pi/2$ on $\tau_x M$.}
\end{array} \right.
\end{equation}
More explicitly, we have an orthogonal decomposition $T_{\phi(x)}S^4 = \tau_x M \oplus \nu_x M$ into oriented subspaces, and $J^f_x$ is rotation through $+\pi/2$ on both $\tau_x M$ and $\nu_xM$.

Choose any complex coordinate $z$.  Then $\tau_xM$ is determined by the complex vector $\pa\phi/\pa z$; indeed, it is spanned by its real and imaginary parts.  Now, harmonicity \eqref{harmonicity} of $\phi$ tells us that this is a holomorphic section of $\phi^{-1}T^{\cc}S^4$; so an easy argument shows that we can extend $\tau_xM$ smoothly over the critical points of $\phi$, giving a decomposition of smooth bundles:
$$
\phi^{-1}TS^4 = \tau M \oplus \nu M.
$$
Hence $f$ extends smoothly to a map $f:S^2 \to  \Si^+(S^4)$ called the \emph{(positive) twistor lift of\/ $\phi$}.
This map defines a decomposition
\begin{equation} \label{decomp-t}
\phi^{-1}T^{\cc}S^4 = \ov{P} \oplus P
\end{equation}
which, at each point $x \in S^2$, gives the $(1,0)$- and $(0,1)$- tangent spaces of $J^f_x$.  Thus, \emph{in the quadric Grassmannian model $\Tt_{2,5}$ of the twistor space, $f$ is represented by the map $P:S^2 \to \Tt_{2,5}$}\,. 

That $\phi$ has non-negative spin is equivalent to saying that $\pa^2\phi/\pa z^2$ is a section of $P$.
Then it is easy to see that $f$ (equivalently, $P$) is horizontal and holomorphic, cf.\ \S \ref{sub:Jacobi2}.
Further, the map $\phi \mapsto f$ is the inverse of $\Pi_d^{\hor}$.  Since $f$ is horizontal and $\pi$ is a Riemannian submersion, the energy of $\phi$ is equal to the energy of $f$; since $f$ is holomorphic, this equals the area of $S^2$ times the degree of $f$; thus $\Pi_d^{\hor}$ is a bijection from $\HHol_d(S^2,\CP^3)$ to $\Harm_d^{\geq 0}(S^2,S^4)$.   It can be checked that $f$ is full if and only if $\phi$ is full so that $\Pi_d^{\hor}$ restricts to bijections $\Pi_d^{\hor,\ff}$ and $\Pi_d^{\hor,\nf}$.  That all maps are continuous with respect to the compact-open topology is clear; that they are proper follows as in \cite{BoWo-space} or from Theorem \ref{th:smooth} below; thus they are homeomorphisms, completing the proof of the proposition and corollary.

\smallskip

Note that, since $\phi$ is weakly conformal, for any $x \in S^2$, the differential $\dd\phi_x:T_xS^2 \to T_{\phi(x)}S^4$ intertwines the standard almost complex structure on $S^2$ at $x$  and $J^f_x$.

\section{Main results} \label{sec:main}

\subsection{The smoothness of the twistor map}
We now consider our spaces of holomorphic and harmonic maps as subspaces of suitable manifolds of maps.  For simplicity, as in \cite{LeWo1}, we consider spaces of $C^k$ maps for some fixed $k \geq 2$, though many other choices are possible.  The space $C^k(S^2,\CP^3)$ of all $C^k$ mappings from $S^2$ to $\CP^3$ with the compact-open topology forms an infinite-dimensional complex-analytic Banach manifold.  The subset $\Hol(S^2,\CP^3)$ is a closed subspace of that manifold; in fact each component, $\Hol_d(S^2, \CP^3)$, is a closed complex submanifold of dimension $4d+3$.

Similarly, the space $C^k(S^2,S^4)$ of all $C^k$ maps from $S^2$ to $S^4$ forms an infinite-dimensional real-analytic Banach manifold.  The space of harmonic maps $\Harm(S^2,S^4)$ is a closed subspace of that manifold. 

Then the maps \eqref{Pi-hor}--\eqref{Pi-hor-ff-d} are restrictions of the real-analytic map
\begin{equation} \label{Pi}
\Pi:C^k(S^2,\CP^3) \to C^k(S^2,S^4)\,, \quad f \mapsto \phi = \pi \circ f \,.
\end{equation}

The following lemma is proved in \cite[Proposition 2.1 and subsequent remarks]{BoWo-higher}.
\begin{lem} \label{lem:unique} 
Let $\phi:M^2 \to S^4$ be a non-constant smooth map from a Riemann surface.  If there is a  holomorphic map $f:M^2 \to \CP^3$ such that $\pi \circ f = \phi$, then it is unique.
\end{lem}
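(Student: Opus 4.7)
The plan is to reduce the statement to a pointwise uniqueness argument and then invoke the identity theorem for holomorphic maps between complex manifolds. Given two holomorphic lifts $f_1, f_2:M^2 \to \CP^3$ of $\phi$, I would show that both agree at every point $x_0$ where $\dd\phi_{x_0} \neq 0$. Since $\phi$ is smooth and non-constant on the connected manifold $M^2$, the open set $U = \{x \in M^2 : \dd\phi_x \neq 0\}$ is non-empty; on $U$ one then has $f_1 = f_2$, and the identity theorem extends this equality to all of $M^2$.

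The geometric input is the identification of $\CP^3$ with the positive twistor space $\Si^+(S^4)$, under which each $f(x)$ encodes a positive almost Hermitian structure $J^{f(x)}$ on $T_{\phi(x)}S^4$. The first step is to check that holomorphicity of $f$ constrains $J^{f(x)}$ in terms of $\dd\phi_x$: decomposing $\dd f(\pa/\pa z) = X_H + X_V$ in $T'\CP^3 = \Hh' \oplus \Vv'$ and applying $\dd\pi$ gives $\pa\phi/\pa z = \dd\pi(X_H)$, and since $\dd\pi|_{\Hh}$ intertwines the complex structure on $\Hh$ with $J^{f(x)}$, the vector $\pa\phi/\pa z$ is forced to be of type $(1,0)$ with respect to $J^{f(x)}$. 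In real isothermal coordinates this reads $J^{f(x)}\bigl(\dd\phi(\pa/\pa x_1)\bigr) = \dd\phi(\pa/\pa x_2)$, and since $J^{f(x)}$ is an isometry it follows as a by-product that $\phi$ is weakly conformal.

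The key step --- and the main obstacle --- is the pointwise rigidity assertion: at any $x_0$ with $\dd\phi_{x_0} \neq 0$, this rotation condition determines $J^{f(x_0)}$ uniquely. Weak conformality ensures $V := \dd\phi_{x_0}(T_{x_0}M^2)$ is a well-defined oriented $2$-plane in $T_{\phi(x_0)}S^4$ on which $J^{f(x_0)}$ must act as the positive $\pi/2$-rotation; positivity of the almost Hermitian structure, together with the orientation on $V^\perp$ induced from those of $T_{\phi(x_0)}S^4$ and $V$, then forces $J^{f(x_0)}|_{V^\perp}$ also to be the positive $\pi/2$-rotation. This reduces to the elementary fact that a positive almost Hermitian structure on $\RR^4$ is determined by its action on any oriented $2$-plane, which is transparent from the description $\Si^+_x \cong \SO(4)/\U(2) \cong \CP^1$. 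Once this is in hand, $f_1(x_0)$ and $f_2(x_0)$ both correspond to this same $J$, so $f_1 = f_2$ on $U$ and hence on $M^2$.
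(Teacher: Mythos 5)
Your proof is correct, and it is very much in the spirit of the framework the paper sets up. The paper itself gives no proof of this lemma; it simply cites Bolton--Woodward [BoWo-higher, Prop.\ 2.1], so there is no in-text argument to compare against. However, your pointwise rotation argument is exactly the geometric content the paper uses a few pages later when it defines the twistor lift in \eqref{twistor-lift}: at an immersive point $x$, $f(x)$ is ``the point of $\Si^+_{\phi(x)}$ representing the unique positive almost Hermitian structure $J^f_x$ on $T_{\phi(x)}S^4$ which is rotation through $+\pi/2$ on $\tau_x M$.'' You have correctly unpacked why holomorphicity of $f$ forces $J^{f(x)}$ to rotate $\dd\phi_x(T_xM)$ by $+\pi/2$ (since $\dd\pi|_{\Hh}$ intertwines $\Jj^{\Hh}$ with the represented Hermitian structure and holomorphicity puts $\dd f(\pa/\pa z)$ in $T'\CP^3 = \Hh' \oplus \Vv'$), and why a positive Hermitian structure on an oriented $\RR^4$ preserving a given oriented $2$-plane and restricting to the positive rotation there is unique (positivity fixes the sign of the rotation on the orthogonal complement, so the map $\SO(4)/\U(2) \cong \CP^1 \to G_2^+(\RR^4)$ is injective on such data). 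The detail you got right that many miss: you derived weak conformality of $\phi$ as a \emph{consequence} of the existence of a holomorphic lift rather than assuming it, which is necessary here since the lemma concerns an arbitrary smooth non-constant $\phi$, not an a priori harmonic one; and this is exactly what guarantees $\dd\phi_{x_0}\neq 0$ implies rank $2$. The reduction to an open dense-in-itself set followed by the identity theorem for holomorphic maps of a connected Riemann surface is the standard and correct way to pass from the generic locus to all of $M^2$.
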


Say that a vector field $u$ along a smooth map $f:M^2 \to \CP^3$ is \emph{vertical} if $\dd\Pi_f(u) = 0$, i.e, $\dd\pi \circ u$ is identically zero.

\begin{lem} \label{lem:hol-vf}
Let $f:M^2 \to \CP^3$ be a non-vertical holomorphic map.  Then any vertical holomorphic vector field along $f$ is identically zero. 
\end{lem}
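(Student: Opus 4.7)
The strategy is to use the holomorphicity of $u$ to extract an algebraic consequence from the (smooth) orthogonal decomposition $f^{-1}T'\CP^3 = V' \oplus H'$, and then to invoke Proposition \ref{prop:hor-sbb2}(iii), which asserts that the bundle map $A''_{V'}:V'\to H'$ (defined by $A''_{V'}(s) = \Hh(\na^f_{\pa/\pa\bar z} s)$ for $s\in V'$) is not identically zero when $f$ is non-vertical.

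First I would note that, since $u$ is vertical, $u'$ is a section of $V'$; and since $u$ is holomorphic, $\na^f_{\pa/\pa\bar z} u' = 0$. Projecting this equation onto the horizontal summand $H'$ yields, by the very definition of $A''_{V'}$, the identity $A''_{V'}(u')\equiv 0$ on $M^2$.

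Next I would argue by contradiction: assume $u\not\equiv 0$. Then $u'$ is a non-zero holomorphic section of the holomorphic vector bundle $f^{-1}T'\CP^3$ over the Riemann surface $M^2$, so its zero set is discrete. At any point $x$ with $u'(x)\ne 0$, the vector $u'(x)$ spans the complex line $V'_x$ (of rank one, because the twistor fibres of $\pi:\CP^3\to S^4$ are copies of $\CP^1$), so the vanishing of $A''_{V'}(u'(x))$ forces the whole linear map $A''_{V'}|_{V'_x}:V'_x\to H'_x$ to be zero. Hence $A''_{V'}$ vanishes on the open dense set $M^2\setminus\{u'=0\}$, and by continuity $A''_{V'}\equiv 0$ on $M^2$, contradicting Proposition \ref{prop:hor-sbb2}(iii).

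I do not foresee a genuine obstacle: the argument rests on the rank-one nature of $V'$, which converts the pointwise equation $A''_{V'}(u')=0$ into the full vanishing of $A''_{V'}$ wherever $u'$ does not vanish. A point to be alert to is that, unlike $\Hh'$, the subbundle $\Vv'\subset T'\CP^3$ is not itself holomorphic, so one cannot treat $u'$ a priori as a holomorphic section of $V'$ with some intrinsic holomorphic structure; the holomorphicity must be used in the ambient holomorphic bundle $f^{-1}T'\CP^3$, where discreteness of the zero set of a non-zero section is available.
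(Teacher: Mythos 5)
Your proof is correct and follows essentially the same route as the paper's: derive $A''_{V'}(u') \equiv 0$ from the $H'$-component of the holomorphicity equation $\na^f_{\pa/\pa\bar z}u' = 0$, invoke Proposition \ref{prop:hor-sbb2}(iii) for $A''_{V'}\not\equiv 0$, and exploit the rank-one nature of $V'$ together with a density argument. The only (harmless) variation is the direction of the density argument: the paper observes that $A''_{V'}$ is non-zero on a dense open set (implicitly using its real-analyticity, or the antiholomorphicity from Proposition \ref{prop:hor-sbb2}(ii)) and concludes $u'$ vanishes there, whereas you observe that $u'$, being a holomorphic section of $f^{-1}T'\CP^3$, has discrete zero set if non-zero, and conclude $A''_{V'}$ vanishes on a dense set, giving a contradiction.

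Your closing caution about $\Vv'$ not being a holomorphic subbundle is sound in principle, and working in the ambient bundle $f^{-1}T'\CP^3$ is a clean way around it; as it happens, the $V'$-component of the holomorphicity equation, $\na^{V'}_{\pa/\pa\bar z}u' = 0$, shows that $u'$ is in fact also holomorphic for the Koszul--Malgrange structure on $V'$, so either viewpoint works.
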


\begin{proof}
The $(1,0)$-part $u'$ of such a vector field is a section of $V' = f^{-1}\Vv'$ which satisfies $A''_{V'}u' = 0$.   But by Proposition \ref{prop:hor-sbb2}(iii), unless $f$ is vertical, $A''_{V'}$ is not identically zero, and so is non-zero on a dense open set; it follows that $u \equiv 0$.
\end{proof}

We now consider the image of each component $\Hol_d(S^2,\CP^3)$ under $\Pi$.  To obtain an immersion we shall exclude the closed subspace $\Ver_d \subset \Hol_d(S^2,\CP^3)$ of vertical maps. 

\begin{thm} \label{th:smooth}
For any $d \geq 1$,  the map
\begin{equation} \label{Pi-d-imm}
\Pi_d:\Hol_d(S^2,\CP^3) \setminus \Ver_d \to C^k(S^2,S^4) \setminus \{\text{\rm constant maps}\}
\end{equation}
defined by $f \mapsto \pi \circ f$ is a real-analytic proper injective immersion, so gives a real-analytic diffeomorphism onto a $(4d+3)$-dimensional real-analytic submanifold $R_d$.   This restricts to real-analytic diffeomorphisms:
\begin{eqnarray}
\Pi_d^{\hor}: \HHol_d(S^2,\CP^3) &\to &\Harm^{\geq 0}_d(S^2,S^4) \,, \label{Pi-d-hor} \\
\Pi_d^{\hor,\ff}: \HHol^{\ff}_d(S^2,\CP^3) &\to &\Harm^+_d(S^2,S^4) \,, \label{Pi-d-full-hor} \\
\Pi_d^{\hor,\nf}: \HHol^{\nf}_d(S^2,\CP^3) &\to &\Harm^{\nf}_d(S^2,S^4) \,. \label{Pi-d-non-hor}
\end{eqnarray}
The inverses of these maps are given by taking the twistor lift.

In particular, $\Harm^{\geq 0}_d(S^2,S^4)$ and $\Harm^{\nf}_d(S^2,S^4)$ are real-analytic subvarieties of
the finite-dim\-en\-sion\-al real-analytic manifold $R_d$.
\end{thm}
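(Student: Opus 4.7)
The plan is to verify four properties of the map $\Pi_d$ — real-analyticity, injectivity, immersivity, and properness — after which the assertions about $R_d$ and its subvarieties follow by standard arguments for proper injective immersions of finite-dim\-en\-sion\-al real-analytic manifolds into Banach manifolds.

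Real-analyticity of $\Pi_d$ is inherited from the fact that post-composition with the (real-analytic) map $\pi:\CP^3\to S^4$ defines a real-analytic morphism $C^k(S^2,\CP^3)\to C^k(S^2,S^4)$, restricted to the closed complex submanifold $\Hol_d(S^2,\CP^3)\setminus\Ver_d$. Injectivity is a direct consequence of Lemma \ref{lem:unique}: if $f,\widetilde f\in \Hol_d(S^2,\CP^3)\setminus\Ver_d$ project to the same $\phi=\pi\circ f$, then $\phi$ is non-constant (otherwise $f$ would map into a single fibre of $\pi$, hence be vertical), and the unique-lift property forces $f=\widetilde f$. Immersivity uses Lemma \ref{lem:hol-vf}: at $f$, tangent vectors are holomorphic vector fields $u$ along $f$, $\dd\Pi_{d,f}(u)=\dd\pi\circ u$, and a kernel element would be a vertical holomorphic vector field along the non-vertical map $f$, which must vanish.

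Properness is the only non-trivial ingredient. The strategy is to exploit the identification of $\Hol_d(S^2,\CP^3)$ with the dense open subset $V_d\subset\CP^{4d+3}$ of coprime quadruplets of polynomials of maximum degree $d$. Given a sequence $(f_n)$ with $\phi_n=\Pi_d(f_n)\to\phi$ in $C^k$, pass to a subsequence so that $[F_n]\to[F^*]$ in the compact ambient $\CP^{4d+3}$, and factor $F^*=p\,\widetilde F$ with $\widetilde F$ coprime. Away from the zero set of $p$ one has $f_n\to\widetilde f:=[\widetilde F]$ pointwise, and so $\phi=\pi\circ\widetilde f$ by continuity. The point is then to rule out $\deg p\geq 1$: if $p$ had a zero at some $z_0$, the lifts $f_n$ would exhibit a non-trivial bubble at $z_0$, and because $f_n$ is non-vertical the horizontal component of $\dd f_n$ would concentrate at $z_0$; then $|\dd\phi_n|=|\dd\pi\circ\dd f_n|$ would blow up locally, contradicting $C^k$ convergence of $\phi_n$ for $k\geq 1$. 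Hence $p$ is constant, $[F^*]\in V_d$, and $f_n\to f:=[F^*]$ in $\Hol_d(S^2,\CP^3)\setminus\Ver_d$ with $\Pi_d(f)=\phi$.

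Granted these four properties, $\Pi_d$ is a closed embedding onto a real-analytic submanifold $R_d$ of the same dimension $4d+3$, and a real-analytic diffeomorphism onto its image. The restrictions \eqref{Pi-d-hor}--\eqref{Pi-d-non-hor} are obtained by restricting $\Pi_d$ to the closed real-analytic subvarieties $\HHol_d(S^2,\CP^3)$, $\HHol_d^{\ff}(S^2,\CP^3)$ and $\HHol_d^{\nf}(S^2,\CP^3)$ cut out by the horizontality equation $Q_d(F)=0$; Proposition \ref{prop:twistor-bij} together with Corollary \ref{cor:twistor-bij} identifies their images with $\Harm^{\geq 0}_d(S^2,S^4)$, $\Harm^+_d(S^2,S^4)$ and $\Harm^{\nf}_d(S^2,S^4)$, and the inverses are provided by the explicit twistor lift \eqref{twistor-lift}. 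The main obstacle is the properness step, i.e.\ excluding bubbling of the holomorphic lifts under $C^k$-convergence of their projections to $S^4$; this uses the non-verticality hypothesis in an essential way, since a purely vertical bubble would live in a fibre $\CP^1$ of $\pi$ and leave no trace on $\phi_n$.
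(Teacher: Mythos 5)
Your treatment of real-analyticity, injectivity (via Lemma \ref{lem:unique}) and immersivity (via Lemma \ref{lem:hol-vf}) coincides with the paper's. The restriction step via Proposition \ref{prop:twistor-bij} and Corollary \ref{cor:twistor-bij} is also correct. Where you diverge from the paper, and where a genuine gap appears, is the properness step.

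You correctly identify the enemy as degree loss ($\deg p\geq 1$) in the compactified space $\CP^{4d+3}$, and you correctly observe at the end that a vertical bubble, living in a fibre $\CP^1$ of $\pi$, would leave essentially no trace on $\phi_n$. But then your argument to exclude vertical bubbling does not work. Your key assertion is: ``because $f_n$ is non-vertical the horizontal component of $\dd f_n$ would concentrate at $z_0$.'' This does not follow. Non-verticality of $f_n$ is a \emph{global} statement (the image of $f_n$ is not contained in a single fibre); it imposes no constraint on what the rescaled limit at $z_0$ looks like. A map $f_n$ can be non-vertical overall while the entire degree lost at $z_0$ is carried by a bubble inside a fibre. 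In that case $\Vv(\dd f_n)$ concentrates at $z_0$ but $\Hh(\dd f_n)=\dd\phi_n$ stays bounded, so no $C^k$ blow-up of $\phi_n$ occurs, and your contradiction evaporates. Concretely, taking $F_n(z)=(z^d,0,z,1/n)$, one has $[F_n]\to[z(z^{d-1},0,1,0)]$ in $\CP^{4d+3}$, the degree-$1$ bubble at $z=0$ sits inside a fibre, and $\phi_n=\pi\circ[F_n]$ converges (in sufficiently low $C^k$ norm once $d$ is large) to $\pi\circ[(z^{d-1},0,1,0)]$, which lies in $R_{d-1}$, not $R_d$.

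What actually closes the argument — and what the paper uses — is not a blow-up estimate but the uniqueness of holomorphic lifts (Lemma \ref{lem:unique}) combined with the hypothesis $\phi\in R_d$. The paper's proof reads: given $\phi_n\to\phi$ \emph{with $\phi\in R_d$}, pass to a subsequence with $[F_{n_k}]\to[F^*]=[q\,\widetilde F]$ in $\CP^{4d+3}$; on the dense complement of $Z(q)$ one gets $\phi=\pi\circ\widetilde f$ with $\widetilde f=[\widetilde F]$; since $\phi\in R_d$ it already has a degree-$d$ lift $\Pi_d^{-1}(\phi)$, so by Lemma \ref{lem:unique} $\widetilde f=\Pi_d^{-1}(\phi)$, which forces $\deg\widetilde f=d$ and hence $\deg q=0$. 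Thus $[F^*]\in V_d$ and $f_{n_k}\to\widetilde f$ in $\Hol_d(S^2,\CP^3)\setminus\Ver_d$. You drop the hypothesis $\phi\in R_d$ and never invoke the uniqueness lemma at this stage; replacing your analytic blow-up claim with the paper's degree/uniqueness count would repair the step. (The example above also shows why the hypothesis $\phi\in R_d$ cannot be dropped: it is exactly the assumption that excludes a lower-degree limit.)
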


\begin{proof}
That $\Pi_d$ is \emph{real analytic} is clear; that it is \emph{injective} follows from Lemma \ref{lem:unique}.  That it is \emph{immersive} follows from Lemma \ref{lem:hol-vf}.  

To show that $\Pi_d$ is \emph{proper}, let $\{\phi_n\}$ be a sequence in $R_d$ which converges to $\phi \in R_d$.  Let $f_n = \Pi_d^{-1}(\phi_n) \in \Hol_d(S^2,\CP^3)$.  Then, since $\Hol_d(S^2,\CP^3)$ lies in the compact manifold $\CP^{4d+3}$, a subsequence $\{f_{n_k}\}$ of $\{f_n\}$ must converge in that manifold, let its limit be $f = [F_0,F_1,F_2,F_3]$. The polynomials $F_i$ might not be coprime, i.e., we might have $(F_0,F_1,F_2,F_3) = (q G_0, q G_1, q G_2, q G_3)$ for some polynomials $G_i$ and $q$, with $q$ of degree $\geq 1$ so that $f = [G_0, G_1, G_2, G_3]$.  Now, away from the zeros of $q$, $\{f_{n_k}\}$ tends pointwise to $f$ so that the corresponding subsequence $\{\phi_{n_k}\}$ tends pointwise to $\pi \circ f$ on a dense subset of $S^2$.  However, the original sequence $\{\phi_n\}$ tends to $\phi$ pointwise, so, by the continuity of both maps, $\phi = \pi \circ f$.  Then, by uniqueness (Lemma \ref{lem:unique}), $f$ must be $\Pi_d^{-1}(\phi)$.  Hence $\Pi_d$ is proper.

Noting that a map in $C^k(S^2,S^4)$ has zero energy if and only if it is constant, Proposition \ref{prop:twistor-bij} shows that $\Pi_d$ restricts to the mappings \eqref{Pi-d-hor} and \eqref{Pi-d-non-hor}.
\end{proof}

\begin{rem} \label{rem:amalg}
(i) Of course, $\Hol_d(S^2,\CP^3)$ is a \emph{complex} manifold, and $\HHol_d(S^2,\CP^3)$ and $\HHol^{\nf}_d(S^2,\CP^3)$  are \emph{complex} algebraic subvarieties of it, so that we can transfer those structures to $R_d$, $\Harm^{\geq 0}_d(S^2,S^4)$ and  $\Harm^{\nf}_d(S^2,S^4)$, respectively; we can then give $\Harm_d(S^2,S^4)$ the structure of a complex algebraic variety of pure dimension $2d+4$ by amalgamating the complex algebraic varieties $\Harm^{\geq 0}_d(S^2,S^4)$ and $\Harm^{\leq 0}_d(S^2,S^4)$ along $\Harm^{\nf}_d(S^2,S^4)$, see \cite{Ve2}.   Then, when $d=1$ or $2$,
$\Harm_d(S^2,S^4) = \Harm_d^{\nf}(S^2,S^4)$ is irreducible; when $d \geq 3$, $\Harm_d(S^2,S^4)$ has three irreducible components, namely, $\ov{\Harm_d^+(S^2,S^4)}$, $\ov{\Harm_d^-(S^2,S^4)}$ and $\Harm_d^{\nf}(S^2,S^4)$ \cite{Ve2}. 

However, our result considers these spaces as subspaces of $R_d$ (and so of $C^k(S^2,S^4)$) so that \emph{real}-analyticity is the appropriate notion. 

(ii) All the maps in $R_d$ are real isotropic of positive spin, see \cite{EeSal,Sal}.

(iii) There are families of holomophic maps with image in any given fibre of $\pi$, so that $\Pi_d$ is neither injective nor  immersive when considered as a map from the whole of $\Hol_d(S^2,\CP^3)$.
\end{rem}

\subsection{Correspondence between infinitesimal deformations} \label{sub:Jacobi2}

The following result is non-trivial in the presence of branch points.

\begin{thm} \label{th:Jacobi-lift}
Let $\phi:S^2 \to S^4$ be a full harmonic map of positive spin and let $f:S^2 \to \Si^+ = \CP^3$ be its twistor lift.  The mapping $\dd\Pi_f: u \mapsto v = \dd\pi \circ u$ defines a one-to-one correspondence between the space of infinitesimal horizontal holomorphic deformations $u$ along $f$ and the space of Jacobi fields $v$ along $\phi$.
\end{thm}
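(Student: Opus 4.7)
My plan is to establish the bijection in three steps: injectivity is a short argument, the forward implication (IHHD $\Rightarrow$ Jacobi field) is a tension-field computation using the Riemannian submersion $\pi$, and the real work is surjectivity, where a Jacobi field must be lifted smoothly across branch points. Injectivity is immediate: if two IHHDs $u_1, u_2$ project to the same $v$ under $\dd\pi$, then $u_1 - u_2$ is a vertical holomorphic vector field along $f$; since $\phi$ is full, $f$ is non-vertical, and Lemma \ref{lem:hol-vf} forces $u_1 = u_2$.

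\textbf{Forward direction.} Given an IHHD $u$, Proposition \ref{prop:int-hol} integrates $u$ to a family $\{f_t\}$ of \emph{holomorphic} maps tangent to $u$. Each $f_t$ is harmonic (a holomorphic map from a Riemann surface to the K\"ahler manifold $\CP^3$), and the composition formula reduces
\[
\tau(\phi_t) \;=\; \tau(\pi\circ f_t) \;=\; \Tr\,(\nabla\dd\pi)(\dd f_t, \dd f_t).
\]
Polarising the statements that horizontal geodesics in $\CP^3$ project to geodesics in $S^4$ (Riemannian submersion) and that vertical geodesics project to constants (totally geodesic fibres) shows that $\nabla\dd\pi$ vanishes on pairs of purely horizontal and on pairs of purely vertical vectors. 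Only mixed cross-terms survive in the trace, each carrying one vertical factor of $\dd f_t$; this vertical factor is $o(t)$ because $u$ is an IHHD. Hence $\tau(\phi_t) = o(t)$, so $v$ is a Jacobi field by Proposition \ref{prop:first-order}.

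\textbf{Surjectivity.} For a Jacobi field $v$, I will take any variation $\{\phi_t\}$ tangent to $v$ and set $\varPhi_t = i\circ\phi_t$. In the quadric-Grassmannian model $\Tt_{2,5}$, the twistor lift of $\phi$ is $P(x) = \spn_\CC\{\pa\varPhi/\pa\bar z,\, \pa^2\varPhi/\pa\bar z^2\}$ on the complement of the (isolated) branch locus $B\subset S^2$. I define $P_t(x)$ by the same formula and set $u(x) := \pa P_t/\pa t\big|_{t=0} \in T_{P(x)}\Tt_{2,5} \cong T_{f(x)}\CP^3$, a section of $f^{-1}T\CP^3$ on $S^2\setminus B$. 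Proposition \ref{prop:isotropy-1st} ensures that $P_t$ is isotropic, hence in $\Tt_{2,5}$, modulo $o(t)$; moreover $\tau(\phi_t) = o(t)$ translates, through the twistor correspondence, into horizontality and holomorphicity of $P_t$ to first order, so $u$ is a holomorphic IHHD on $S^2\setminus B$ with $\dd\pi\circ u = v$ by construction.

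\textbf{The main obstacle: branch points.} The principal difficulty is smoothness of $u$ across each $x_0 \in B$. The lift $P$ itself extends smoothly to all of $S^2$: in the $\CP^3$-model it is represented by a coprime quadruple of polynomials $[F_0,F_1,F_2,F_3]$, which resolves the indeterminacies in the derivative formulas for $\varPhi$. To extend $P_t$ across $x_0$, I will track the common vanishing orders of $\pa\varPhi_t/\pa\bar z$ and $\pa^2\varPhi_t/\pa\bar z^2$ at $x_0$ and show that their first-order variations in $t$ are divisible by the same local power of $\bar z$ as the unperturbed generators. The key inputs are the real-analyticity of $v$ (elliptic regularity for the Jacobi equation) together with the first-order isotropy from Proposition \ref{prop:isotropy-1st}, which jointly force the appropriate vanishing of $\dd i\circ v$ at branch points. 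After division by the common factor, $t$-differentiation yields a smooth extension of $u$ across $x_0$; continuity then propagates holomorphicity, horizontality-to-first-order, and the identity $\dd\pi\circ u = v$ from the dense set $S^2\setminus B$ to the whole sphere, completing the construction.
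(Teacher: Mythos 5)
Your overall architecture (injectivity via Lemma~\ref{lem:hol-vf}, forward direction by the composition formula for the tension field and O'Neill, surjectivity by defining the lift away from branch points and then extending) matches the paper's, and the forward direction is sound --- integrating $u$ by Proposition~\ref{prop:int-hol} so that $\tau(f_t)\equiv 0$ is a tidy simplification of the paper's version, which instead keeps an arbitrary variation and kills the first term with holomorphicity-to-first-order.

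The gap is in the extension across branch points, which is exactly the ``non-trivial'' content the paper flags. You propose to show that the $t$-derivatives of $\pa\varPhi_t/\pa\bar z$ and $\pa^2\varPhi_t/\pa\bar z^2$ are ``divisible by the same local power of $\bar z$ as the unperturbed generators,'' arguing that real-analyticity of $v$ and first-order isotropy force $\dd i\circ v$ to vanish appropriately at branch points. This is not true in general: a Jacobi field can move, merge, or resolve branch points (already for holomorphic maps $S^2\to S^2$, perturbing the polynomial coefficients generically shifts the branch locus), and then $\pa_t\bigl(\pa\varPhi_t/\pa\bar z\bigr)\big|_{t=0}$ does not vanish at $x_0$ at all, let alone to order $k$. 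The smoothness of $P_t$ as a $2$-plane is not inherited from a fixed vanishing order of its generators, so the strategy collapses at the one place the theorem has teeth. The paper's actual argument never attempts to control the generators of $P_t$ near $C_\phi$: it works with the already-defined vector field $u$ on $S^2\setminus C_\phi$ and decomposes $u'=u^{H'}+u^{V'}$; the horizontal part is the horizontal lift of $v$, which extends globally; holomorphicity of $u'$ then yields the two equations \eqref{hol-u}, from which $u^{V'}$ is a holomorphic section of $V'$ (Koszul--Malgrange structure) on the punctured neighbourhood, and the constraint $A''_{V'}u^{V'}=-\na^{H'}_{\pa/\pa\bar z}u^{H'}$, combined with the antiholomorphicity of $A''_{V'}$ (Proposition~\ref{prop:hor-sbb2}(ii)), shows $\bar z^k u^{V'}$ is smooth, ruling out poles and essential singularities, so the singularity of $u^{V'}$ is removable. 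You would need to replace your vanishing-order argument with something of this kind --- or produce an independent proof of the divisibility claim, which I believe is false as stated.
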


\begin{proof}
(a) \emph{Let $u$ be an IHHD along $f$.  We show that $v = \dd\pi \circ u$ is a Jacobi field along $\phi$}.

To do this, let $f_t$ be a one-parameter variation of $f$ tangent to $u$, and set $\phi_t = \pi \circ f_t$. Then $v = \dd\pi \circ u = \pa\phi_t/\pa t|_{t=0}$.  By Proposition \ref{prop:inf-hol},  $f_t$ is holomorphic to first order, and by Definition \ref{def:inf-HH}, it is horizontal to first order, hence
\begin{equation} \label{ft-1st-order}
{\rm (i)} \ \frac{\na}{\pa z}\frac{\pa f_t}{\pa\ov{z}} = o(t)\: \text{ so that } \:\tau(f_t) = o(t)\,; \quad
{\rm (ii)} \ (\dd f_t)^{\Vv} = o(t).
\end{equation}
By the composition law for the tension field \cite{EeSam} we obtain
$$
\tau(\phi_t) = \dd\pi \circ \tau(f_t) + \Tr\na\dd\pi(\dd f_t, \dd f_t) \,.
$$ 
The first term on the right-hand side is $o(t)$ by \eqref{ft-1st-order}(i).  As for the second term,
by \eqref{ft-1st-order}(ii) we have
$$
\na\dd\pi(\dd f_t, \dd f_t) = \na\dd\pi\bigl((\dd f_t)^{\Hh}, (\dd f_t)^{\Hh}\bigr) + o(t) = o(t) \,,
$$ 
since, for any Riemannian submersion, $\na\dd\pi(X,X) =0$ for any horizontal vector $X$  \cite{O'N}.
Hence $\tau(\phi_t) = o(t)$, and by Proposition \ref{prop:first-order}, $v = \pa\phi_t/\pa t|_{t=0}$ is a Jacobi field.

\medskip

(b) \emph{Conversely, let $v$ be a Jacobi field along $\phi$.   We show that there is a unique IHHD $u$ along $f$ such that $\dd\pi \circ u = v$}.
	We establish this by a sequence of lemmas.

Let $C_{\phi}$ be the set of points where $\phi$ fails to be an immersion (i.e., the branch points of $\phi$);  we first of all lift $v$ at points of $S^2 \setminus C_{\phi}$\,.  Let $\{\phi_t\}$ be a smooth one-parameter variation of $\phi$ tangent to $v$.  Then, by Proposition \ref{prop:first-order}, $\phi_t$ is harmonic and isotropic to first order.   For each $x \in S^2 \setminus C_{\phi}$, $\phi_t$ is an immersion at $x$ \emph{for small enough $|t|$}, by which we mean \emph{for $|t| < \ep(x)$ for some $\ep(x) > 0$}; then the twistor lift $f_t(x)$ can be defined as in \eqref{twistor-lift}.  Note that all calculations below will be valid for small enough $|t|$ in this sense.  Set $u(x) = \pa f_t(x)/\pa t|_{t=0}$.  Clearly $\dd\pi \circ u = v$.  We shall show that $u$ is a well-defined infinitesimal horizontal holomorphic deformation of $f$ on $S^2 \setminus C_{\phi}$\,.

By definition, $f_t(x)$ is a positive almost Hermitian structure  on $T_{\phi_t(x)}S^4$.  It thus gives a decomposition
\begin{equation} \label{decomp-type}
T^{\cc}_{\phi_t(x)}S^4 = T'_{\phi_t(x)}S^4 \oplus T''_{\phi_t(x)}S^4 = \ov{P_t(x)} \oplus P_t(x)
\end{equation}
into $(1,0)$- and $(0,1)$- tangent spaces.  In terms of the quadric Grassmannian model $\Tt_{2,5}$ of the twistor space, $f_t(x)$ is represented by the positive isotropic $2$-plane $P_t(x) = T''_{\phi_t(x)}S^4$.

To examine this, let $z$ be a local complex coordinate on an open subset $U$ of $S^2 \setminus C_{\phi}$.  For $x \in U$ set $\psi_t(x) = (\pa\phi_t/\pa \ov{z})'' = (\pa\phi_t/\pa \ov{z})^{\!P_t}$, the $(0,1)$-part, equivalently, $P_t$-component,  of $\pa\phi_t/\pa \ov{z}$ at $x$ with respect to the decomposition \eqref{decomp-type}.  Note that this is non-zero for $t=0$, so that it remains non-zero for small enough $|t|$.  Hence its conjugate $(\pa\phi_t/\pa z)' = (\pa\phi_t/\pa z)^{\!\ov{P_t}}$  is also non-zero.  Note that $P_t(x)$ is the unique positive isotropic $2$-plane in $T^{\cc}_{\phi_t(x)}S^4$ containing $\psi_t(x)$.  Replacing $U$ by a smaller open subset if necessary, let $b_t$ be a non-zero section of $P_t$ over $U$ such that $\{b_t(x), \psi_t(x)\}$ is a basis for $P_t(x)$.  We calculate derivatives of these sections; it is convenient to use $\varPhi_t = i \circ \phi_t$ where $i:S^4 \to \RR^5$ is the standard inclusion.  Note that $\pa\varPhi_t/\pa z = \pa\phi_t/\pa z$ etc.

\begin{lem} \label{lem:psi-t}
Let $x \in U \setminus C_{\phi}$.  Then, for small enough $|t|$,

{\rm (i)} \ $\psi_t = \dfrac{\pa\phi_t}{\pa\ov{z}} + o(t)$\,;

{\rm (ii)} under the orthogonal decomposition\/ $\CC^5 = \ov{P_t(x)} \oplus P_t(x) \oplus \varPhi_t(x)$,
the $P_t$-component $(\pa\psi_t/\pa\bar{z})^{\!P_t}$ of\/ $\pa\psi_t/\pa\bar{z}$ satisfies
$$
\Bigl(\frac{\pa\psi_t}{\pa\bar{z}}\Bigr)^{\!\!\! P_t} = \frac{\pa\psi_t}{\pa\bar{z}} + o(t)
		= \frac{\pa^2\varPhi_t}{\pa\ov{z}^2} + o(t) \,.
$$
\end{lem}

\begin{proof}
(i) This is equivalent to showing that the $(1,0)$-part $(\pa\phi_t/\pa \ov{z})^{\!\ov{P_t}}$ of $\pa\phi_t/\pa \ov{z}$ is $o(t)$.  Now, as above, for small enough $|t|$, the vector $(\pa\phi_t/\pa z)^{\!\ov{P_t}}$ is non-zero, and so is a basis for the one-dim\-en\-sion\-al complex space $(\Im\dd\phi_t)^{\!\ov{P_t}}$.  Hence, $(\pa\phi_t/\pa \ov{z})^{\!\ov{P_t}}$ must be a multiple of it.
Further, by Proposition \ref{prop:isotropy-1st} we have
\begin{multline*}
\Biginn{\Bigl( \frac{\pa\phi_t}{\pa\bar{z}}\Bigr)^{\!\!\!\ov{P_t}},
	\Bigl(\frac{\pa\phi_t}{\pa z}\Bigr)^{\!\!\!\ov{P_t}}}^{\!\hh} =
\Biginn{\Bigl( \frac{\pa\phi_t}{\pa\bar{z}}\Bigr)^{\!\!\!\ov{P_t}},
	\Bigl(\frac{\pa\phi_t}{\pa\bar{z}}\Bigr)^{\!\!\!P_t}}^{\!\cc} \\
	= \frac{1}{2} \Biginn{\frac{\pa\phi_t}{\pa\bar{z}}, \frac{\pa\phi_t}{\pa\bar{z}}}^{\!\cc} + o(t)
	= o(t) \,.
\end{multline*}

This implies that $(\pa\phi_t/\pa \ov{z})^{\!\ov{P_t}}$ is $o(t)$, as required.

(ii) It suffices to show that the components of $\pa\psi_t/\pa\bar{z}$ in $\spn_{\CC}\,{\varPhi_t}$ and in $\ov{P_t}$ are $o(t)$.  By part (i) and Proposition \ref{prop:isotropy-1st}, the component in $\spn_{\CC}\,{\varPhi_t}$ is
\begin{equation} \label{Phi-cpt}
\Biginn{\frac{\pa\psi_t}{\pa\bar{z}}, \varPhi_t}^{\!\cc}
	= \Biginn{\frac{\pa^2\varPhi_t}{\pa\bar{z}^2}, \varPhi_t}^{\!\cc} + o(t) = o(t) \,.
\end{equation}

Next note that, when $t=0$, $\{\psi_t\,, (\pa\psi_t/\pa \ov{z})^{P_t}\}$ equals $\{\pa\varPhi/\pa\ov{z}\,,\pa^2\varPhi/\pa\ov{z}^2\}$; this is a basis for $P_0$ on a dense open subset of $U \setminus C_{\phi}$.  It follows that $\{\psi_t\,, (\pa\psi_t/\pa \ov{z})^{P_t} \}$ is a basis for $P_t$ on that set for small enough $|t|$.  We use this basis to estimate the component in $\ov{P_t}$\,.  

First, since $\psi_t$ is in $P_t$\,,
$$
\Biginn{\Bigl(\frac{\pa\psi_t}{\pa\bar{z}}\Bigr)^{\!\!\!\ov{P_t}}, \psi_t}^{\!\cc}
	= \Biginn{\frac{\pa\psi_t}{\pa\bar{z}}, \psi_t}^{\!\cc}
	= \frac{1}{2} \frac{\pa}{\pa\bar{z}}\inn{\psi_t, \psi_t}^{\cc} = 0\,.
$$

Second, on using part (i), \eqref{Phi-cpt}, and Proposition \ref{prop:isotropy-1st}, we have
$$
\Biginn{\Bigl(\frac{\pa\psi_t}{\pa\bar{z}}\Bigr)^{\!\!\!\ov{P_t}}, \Bigl(\frac{\pa\psi_t}{\pa\bar{z}}\Bigr)^{\!\!\!P_t}}^{\!\cc}
	= \frac{1}{2} \Biginn{\frac{\pa\psi_t}{\pa\bar{z}}, \frac{\pa\psi_t}{\pa\bar{z}}}^{\!\cc} + o(t)
	= \Biginn{\frac{\pa^2\varPhi_t}{\pa\bar{z}^2}, \frac{\pa^2\varPhi_t}{\pa\bar{z}^2}}^{\!\cc} + o(t)
	= o(t) \,.
$$

The last two calculations show that the component of $\pa\psi_t/\pa\bar{z}$ in $\ov{P_t}$ is $o(t)$.   
\end{proof}

We can now examine the derivatives of our basis of $P_t$\,.

\begin{lem} \label{lem:derivs}
\begin{eqnarray*}
{\rm (i)} \ \frac{\pa\psi_t}{\pa z} &\in & \spn_{\CC}\,{\varPhi_t} + o(t) \,;\\
{\rm (ii)} \ \frac{\pa b_t}{\pa z} &\in & P_t + \spn_{\CC}\,{\varPhi_t} + o(t) \,; \\
{\rm (iii)} \ \frac{\pa\psi_t}{\pa\bar{z}} & \in &P_t + o(t) \,; \\
{\rm (iv)} \ \frac{\pa b_t}{\pa\bar{z}} &\in & P_t + o(t) \,.
\end{eqnarray*}
\end{lem}

\begin{proof}
(i) {}From Lemma \ref{lem:psi-t}(i) and \eqref{ha-sphere} we have
$$
\frac{\pa\psi_t}{\pa z} = \frac{\pa^2 \varPhi_t}{\pa z\pa\ov{z}} + o(t) \in \spn_{\CC}\,{\varPhi_t} + o(t) \,. 
$$
(ii) {}From part (i) we have
\begin{eqnarray*}
\Biginn{\frac{\pa b_t}{\pa z} \,, \psi_t}^{\!\cc}
		&= &- \Biginn{b_t \,, \frac{\pa\psi_t}{\pa z}}^{\!\cc} = o(t)\,;
			\quad \text{also} \\
\Biginn{\frac{\pa b_t}{\pa z} \,, b_t}^{\!\cc} &= &\frac{1}{2}\frac{\pa}{\pa z}\biginn{b_t, b_t}^{\!\cc} = 0\,,
\end{eqnarray*}
which shows (ii).

(iii) Immediate from Lemma \ref{lem:psi-t}(ii).

(iv) {}From part (iii) we have
\begin{eqnarray*}
\Biginn{\frac{\pa b_t}{\pa\bar{z}} \,, \psi_t}^{\!\cc} &=& -\Biginn{b_t, \frac{\pa\psi_t}{\pa\bar{z}}}^{\!\cc} = o(t) \,;
		\quad \text{also} \\
\Biginn{\frac{\pa b_t}{\pa \bar{z}} \,, b_t}^{\!\cc} &=& \frac{1}{2}\frac{\pa}{\pa \bar{z}}\biginn{b_t, b_t}^{\!\cc} = 0\,.
\end{eqnarray*}
Further, by Lemma \ref{lem:psi-t}(i),  $\pa\phi_t/\pa\bar{z} \in P_t + o(t)$, so we have
$$
\Biginn{\frac{\pa b_t}{\pa\bar{z}} \,, \phi_t}^{\!\cc} = -\Biginn{b_t, \frac{\pa\phi_t}{\pa\bar{z}}}^{\!\cc} = o(t) \,,
$$
which, together with (ii), shows (iv).
\end{proof}

\begin{lem}
{\rm (i)} The twistor lift $f_t$ is holomorphic to first order and horizontal to first order.

{\rm (ii)} $u = \pa f_t/\pa t|_{t=0}$ is an IHHD defined on $S^2 \setminus C_{\phi}$\,.
\end{lem}

\begin{proof}
(i) Using the quadric Grassmannian model,  the $(1,0)$-part of derivative $\pa f_t/\pa\bar{z}$ is the linear map from $P_t$ to $P_t^{\perp}$ which maps the basis vectors $\psi_t$ and $b_t$ to the $P_t^{\perp}$-components of $\pa\psi_t/\pa\bar{z}$ and $\pa b_t/\pa\bar{z}$ respectively.  By Lemma \ref{lem:derivs}, these components are $o(t)$.  Hence
the $(1,0)$-part of $\pa f_t/\pa\bar{z}$ is $o(t)$, which proves the first assertion. 

Similarly, the $(1,0)$-part of $\pa f_t/\pa z$ is the linear map from $P_t$ to $P_t^{\perp}$ which maps the basis vectors $\psi_t$ and $b_t$ to the $P_t^{\perp}$-components of $\pa\psi_t/\pa z$ and $\pa b_t/\pa z$ respectively.  By Lemma \ref{lem:derivs}, these last quantities lie in $\spn_{\CC}{\phi_t}$ to $o(t)$.  Hence
the $(1,0)$-part of derivative $\pa f_t/\pa z$ lies in $\Ll(P_t, \spn_{\CC}{\phi_t})$ to $o(t)$;  this is the $(1,0)$ horizontal space by \eqref{HV-P}.

(ii) This follows from (i) by Proposition \ref{prop:inf-hol} and Definition \ref{def:inf-HH}.
\end{proof}

\begin{lem}
The vector field $u$ can be extended to an IHHD on $S^2$.
\end{lem}

\begin{proof}
With respect to the pull-back of the decomposition \eqref{HV-Si} write $u = u^{V} + u^{H}$; then we have the corresponding decomposition \eqref{HV-Si'} of $(1,0)$-parts: $u' = u^{V'} + u^{H'}$.
Note that $u^{H}$ is the horizontal lift of $v$, and so both it and its $(1,0)$-part $u^{H'}$ can be extended to real-analytic vector fields on $S^2$.

Noting that the K\"ahler nature of $\CP^3$ means that covariant derivatives respect the type decomposition, holomorphicity of $u'$ tells us that, with respect to any local complex coordinate $z$ on $S^2$,
\begin{eqnarray*}
0 &= &\na^f_{\pa/\pa\bar{z}}u' \\
	&= &V'(\na^f_{\pa/\pa\bar{z}}u^{V'}) + V'(\na^f_{\pa/\pa\bar{z}}u^{H'})
		+ H'(\na^f_{\pa/\pa\bar{z}}u^{V'})  + H'(\na^f_{\pa/\pa\bar{z}}u^{H'}) \\
	&= &\na^{V'}_{\pa/\pa\bar{z}}u^{V'} + 0
		+  A''_{V'}u^{V'} + \na^{H'}_{\pa/\pa\bar{z}}u^{H'},
\end{eqnarray*}
where $\na^{V'}$ and $\na^{H'}$ denote the induced connections on $V'$ and $H'$; the second term is zero by holomorphicity of $H'$ (Proposition \ref{prop:hor-sbb2}(i)).  
Taking components in $V'$ and $H'$ we obtain 
\begin{equation} \label{hol-u}
\text{(i) } \na^{V'}_{\pa/\pa\bar{z}}u^{V'} = 0 \text{ and }
		\text{(ii) }  A''_{V'}u^{V'} = -\na^{H'}_{\pa/\pa\bar{z}}u^{H'}.
\end{equation}
The first equation says that $u^{V'}$ is a holomorphic section of $V'$ with respect to its Koszul--Malgrange holomorphic structure.  Hence, at any point of $C_{\phi}$, it has a removable singularity, a pole or an isolated essential singularity.

However, by Proposition \ref{prop:hor-sbb2}(ii), $A''_{V'}$ is antiholomorphic, so near a point of $C_{\phi}$, in a local complex coordinate $z$ centred on that point, it is of the form $\bar{z}^k$ times a real-analytic non-zero map for some $k \in \{0,1,2,\ldots\}$.   It follows that $\bar{z}^k u^{V'}$ is smooth, which is not possible if $u^{V'}$ has a pole or isolated essential singularity.

Hence $u^{V'}$ has a removable singularity at each point of $C_{\phi}$\,, and so can be extended smoothly to $S^2$.  It follows that $u'$, and so also $u$, can be extended smoothly to $S^2$.
\end{proof}

\begin{lem} \label{lem:unique-vf}
There is at most one holomorphic vector field $u$ with $\dd\pi \circ u = v$.
\end{lem}

\begin{proof}
As in the last proof, the horizontal component of $u$ is uniquely determined as the horizontal lift of
$v$.  The vertical component is determined away from zeros of $A''_{V'}$ by \eqref{hol-u}(ii); since the complement of the zeros is a dense set, by continuity this component is also unique.
\end{proof}

This completes the proof of Theorem \ref{th:Jacobi-lift}.

\end{proof}

We now turn to the \emph{integrability} (Definition \ref{def:integrable}) of the corresponding infinitesimal deformations.

\begin{prop} \label{prop:int-full}
Let $\phi:S^2 \to S^4$ be a full harmonic map of positive spin and let $f:S^2 \to \Si^+ = \CP^3$ be its twistor lift. 
Let $u$ be an IHHD along $f$ and $v = \dd\pi \circ u$ the corresponding Jacobi field along $\phi$. Then the following are equivalent:

{\rm (i)} $v$ is integrable;

{\rm (ii)} $u$ is integrable by horizontal holomorphic maps. 
\end{prop}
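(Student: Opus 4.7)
The implication (ii) $\Rightarrow$ (i) should be immediate. If $\{f_t\}$ is a smooth one-parameter family of horizontal holomorphic maps with $f_0 = f$ and $\pa f_t/\pa t\vert_{t=0} = u$, then Proposition \ref{prop:twistor-bij} tells us that each $\phi_t := \pi \circ f_t$ is harmonic; so $\{\phi_t\}$ is a smooth variation of $\phi$ through harmonic maps with variation vector field $\dd\pi \circ u = v$, giving integrability of $v$.

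The substantive direction is (i) $\Rightarrow$ (ii), and my plan is to invert the twistor correspondence at the level of one-parameter families. Let $\{\phi_t\}$ be a smooth variation of $\phi$ through harmonic maps tangent to $v$. Since $\phi$ is full and of positive spin, $\phi \in \Harm_d^+(S^2,S^4)$; by Corollary \ref{cor:twistor-bij} together with Theorem \ref{th:smooth}, this is an open subset of $\Harm_d(S^2,S^4)$ (as the image under the homeomorphism $\Pi_d^{\hor}$ of the open subvariety $\HHol_d^{\ff}(S^2,\CP^3)$ of $\HHol_d(S^2,\CP^3)$). Hence $\phi_t \in \Harm_d^+(S^2,S^4)$ for $|t|$ small. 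Applying the real-analytic diffeomorphism $(\Pi_d^{\hor,\ff})^{-1}$ furnished by Theorem \ref{th:smooth} then produces a smooth family $\{f_t\}$ of full horizontal holomorphic maps with $f_0 = f$, obtained as the twistor lifts of $\{\phi_t\}$.

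The variation vector field $\tilde{u} := \pa f_t/\pa t\vert_{t=0}$ of this lifted family is an IHHD along $f$: it is holomorphic to first order by Proposition \ref{prop:inf-hol} and horizontal to first order by Definition \ref{def:inf-HH}, since each $f_t$ is simultaneously holomorphic and horizontal. By construction $\dd\pi \circ \tilde{u} = \pa\phi_t/\pa t\vert_{t=0} = v = \dd\pi \circ u$, and the bijectivity in Theorem \ref{th:Jacobi-lift} (equivalently, the uniqueness statement of Lemma \ref{lem:unique-vf}) then forces $\tilde{u} = u$, so $u$ is integrable by horizontal holomorphic maps. The main — and really the only non-formal — obstacle is the invocation of Theorem \ref{th:smooth}: one needs the twistor correspondence to be a \emph{real-analytic diffeomorphism}, not merely a homeomorphism, in order that a smooth variation in $C^k(S^2,S^4)$ through harmonic maps lift to a smooth variation in $C^k(S^2,\CP^3)$ through horizontal holomorphic maps. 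Once that smoothness of the lift is secured, the remainder of the argument is bookkeeping with the lifting bijection of Theorem \ref{th:Jacobi-lift}.
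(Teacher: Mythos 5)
Your proof is correct and takes essentially the same route as the paper: take the harmonic variation, observe that $\phi_t$ remains of positive spin for small $|t|$, lift via the twistor correspondence, and identify the resulting variation vector field with $u$. The paper's own proof is terser — it simply asserts "$\pa f_t/\pa t|_{t=0} = u$" — whereas you supply the missing justifications (smoothness of the lifted family via Theorem \ref{th:smooth}, and the identification $\tilde u = u$ via the injectivity in Theorem \ref{th:Jacobi-lift} / Lemma \ref{lem:unique-vf}), which is a reasonable and welcome degree of extra care.
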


\begin{proof}
Suppose that (i) holds.  Then there is a one-parameter family of harmonic maps $\phi_t:S^2 \to S^4$ with $\phi_0 = \phi$ and $\pa\phi_t/\pa t|_{t=0} = v$.  For small enough $|t|$, $\phi_t$ has positive spin and so its twistor lift is a horizontal holomorphic map $f_t:S^2 \to \CP^3$.  Clearly, $f_0 = f$ and $\pa f_t/\pa t|_{t=0} = u$, hence (ii) holds.  The converse is similar.
\end{proof}

\begin{defn} \label{def:regular-ha}
Let $\phi \in \Harm_d^{\ff}(S^2,S^4)$.  Recall that one of $\pm\phi$ has positive spin and so has a twistor lift $f \in \HHol_d^{\ff}(S^2,\CP^3)$.  Say that \emph{$\phi$ is regular} if $f$ is regular in the sense of Definition \ref{def:regular-ho}.
\end{defn}

By Theorem \ref{th:smooth}, a regular point is a smooth point of $\Harm^{\ff}_d(S^2,S^4)$.  By Theorem \ref{th:Jacobi-lift} and Proposition \ref{prop:int-full}, we obtain the main result of this section.

\begin{thm} \label{th:full-int}
Let $d \in \{3,4,\ldots\}$ and let $\phi \in \Harm^{\ff}_d(S^2,S^4)$.  Then the following are equivalent:

{\rm (i)}  $\phi$ is a regular point;

{\rm (ii)}  the dimension of the space of Jacobi fields at $\phi$ is $2d+4$;

{\rm (iii)} all Jacobi fields along $\phi$ are integrable.
\end{thm}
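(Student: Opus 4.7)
The plan is to translate, via the twistor correspondence established in Theorem \ref{th:Jacobi-lift}, the three equivalent statements of Proposition \ref{prop:smooth} on the horizontal holomorphic side directly into the three statements of the theorem on the harmonic side.

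First I would reduce to the positive spin case. Since $\phi$ is full, one of $\pm\phi$ lies in $\Harm_d^+(S^2,S^4)$, and composition with the antipodal map of $S^4$ is an isometry; it carries Jacobi fields to Jacobi fields bijectively, preserves the Jacobi operator, and preserves integrability (a smooth variation through harmonic maps is sent to such a variation). Thus, without loss of generality, $\phi$ has positive spin, and I may let $f \in \HHol_d^{\ff}(S^2,\CP^3)$ be its twistor lift. By Definition \ref{def:regular-ha}, condition (i) of the theorem is then literally the statement that $f$ is a regular point of $Q_d$ in the sense of Definition \ref{def:regular-ho}.

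Next I would invoke Theorem \ref{th:Jacobi-lift} to obtain the complex-linear bijection $u \mapsto v = \dd\pi \circ u$ between IHHDs along $f$ and Jacobi fields along $\phi$. Since this map is linear and bijective, it yields an equality of dimensions, so the condition ``the space of Jacobi fields has dimension $2d+4$'' is equivalent to ``the space of IHHDs at $f$ has dimension $2d+4$''. Similarly, Proposition \ref{prop:int-full} asserts that a Jacobi field $v$ is integrable if and only if the corresponding IHHD $u$ is integrable by horizontal holomorphic maps; this translates the condition ``all Jacobi fields are integrable'' into ``all IHHDs are integrable by horizontal holomorphic maps''.

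Applying Proposition \ref{prop:smooth} to $f$ then closes the loop: the three conditions there --- regularity of $f$, dimension $2d+4$ of IHHDs, and integrability of all IHHDs by horizontal holomorphic maps --- are equivalent, and by the preceding two paragraphs these are precisely the three conditions of the theorem. The main obstacle in this program has already been dispatched in Section \ref{sec:main}: it is the non-trivial lifting of Jacobi fields across branch points encoded in Theorem \ref{th:Jacobi-lift}, together with the matching of integrability in Proposition \ref{prop:int-full}. Once these are granted, the present theorem is a pure assembly step requiring no further analysis.
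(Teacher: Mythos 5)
Your proposal is correct and follows essentially the same route as the paper: reduce to positive spin via an orientation-reversing isometry, pass to the twistor lift, translate conditions (ii) and (iii) across the correspondence of Theorem \ref{th:Jacobi-lift} and Proposition \ref{prop:int-full}, and then invoke Proposition \ref{prop:smooth}. One small imprecision: Theorem \ref{th:Jacobi-lift} gives a real-linear bijection $u \mapsto \dd\pi\circ u$ (the twistor projection does not interact with any natural complex structure on the Jacobi fields side), so the equality of the $2d+4$'s in (ii) should be read via matching real dimensions $4d+8$ --- this does not affect the argument.
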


\begin{proof}  On composing with an orientation-reversing isometry, if necessary, we can assume that $\phi$ is of positive spin and thus given as the projection $\pi \circ f$ of a full horizontal holomorphic map.  The result then follows by combining Propositions  \ref{prop:smooth} and \ref{prop:int-full}.
\end{proof}

J.\ Bolton and L.~M.\ Woodward \cite{BoWo-space} show that  all $f \in \HHol_d^{\ff}(S^2,\CP^3)$ are regular; in particular, $\Harm_d^{\ff}(S^2,S^4)$ is a smooth manifold\footnote{Bolton and L. Fern\'andez inform us that they have also proved this for $d=6$.} for $d = 3,4,5$.

\begin{cor} \label{cor:full-int}
All Jacobi fields along a full harmonic map of twistor degree $\leq 5$ are integrable.
\end{cor}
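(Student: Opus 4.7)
The plan is to deduce this directly from Theorem \ref{th:full-int} combined with the cited result of Bolton--Woodward.

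First I would note that a full harmonic map $\phi:S^2\to S^4$ must have twistor degree $d\geq 3$ by Corollary \ref{cor:twistor-bij}, so the only cases to consider are $d\in\{3,4,5\}$. Given such a $\phi$, I would pass to $\pm\phi$ so that the map has positive spin (using the discussion of spin following Proposition \ref{prop:twistor-bij}); this operation is an isometry of $S^4$ and so preserves the space of Jacobi fields and their integrability. Call the resulting map of positive spin $\phi$ again and let $f:S^2\to\CP^3$ be its twistor lift, which lies in $\HHol_d^{\ff}(S^2,\CP^3)$ by Corollary \ref{cor:twistor-bij}.

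Next I would invoke the Bolton--Woodward theorem from \cite{BoWo-space}, which asserts that every $f\in\HHol_d^{\ff}(S^2,\CP^3)$ is regular (in the sense of Definition \ref{def:regular-ho}) for $d=3,4,5$. By Definition \ref{def:regular-ha}, this means $\phi$ is a regular point of $\Harm_d^{\ff}(S^2,S^4)$.

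Finally I would apply Theorem \ref{th:full-int}: since $\phi$ is regular, the equivalence (i)$\Leftrightarrow$(iii) in that theorem implies that every Jacobi field along $\phi$ is integrable, completing the proof. There is no substantive obstacle here — the entire technical content has already been packaged into Theorem \ref{th:full-int} (which in turn rests on Theorem \ref{th:Jacobi-lift} lifting Jacobi fields to IHHDs and Proposition \ref{prop:int-full} matching their integrability) together with the external regularity result; the corollary is simply the assembly of these facts in the range of degrees where regularity is known to hold for all full horizontal holomorphic maps.
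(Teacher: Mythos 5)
Your proof is correct and follows the paper's own line of reasoning exactly: reduce to positive spin via an isometry, invoke the Bolton--Woodward regularity result for $\HHol_d^{\ff}(S^2,\CP^3)$ with $d=3,4,5$ (after noting via Corollary \ref{cor:twistor-bij} that fullness forces $d\geq 3$), and conclude via the equivalence (i)$\Leftrightarrow$(iii) in Theorem \ref{th:full-int}. Nothing to add.
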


\section{The non-full case} \label{sec:nonfull}

\subsection{Infinitesimal deformations of non-full horizontal holomorphic maps}

Recall that a map $S^2 \to \CP^3$ is non-full if and and only if its image lies in a projective subspace $\CP^2 \subset \CP^3$.
We study the space of non-full horizontal holomorphic maps, $\HHol^{\nf}(S^2,\CP^3)$.

It turns out that any such map has image in a $\CP^1$; this must be horizontal in the sense that its tangent spaces are in the horizontal subbundle $\Hh$ of \eqref{HV-CP3}.  In fact, let $\CP^1_0 = \{[z_0,0,z_2,0] : [z_0,z_2] \in \CP^1\}$; note that this is a horizontal $\CP^1$.  Recalling that $\Sp(2)$ acts on $\CP^3$ preserving horizontality, we have 

\begin{lem} \label{lem:CP0^1} Let $f:S^2 \to \CP^3$ be a non-full horizontal holomorphic map.  Then there is $A \in \Sp(2)$ such that $A \circ f$ has its image in $\CP^1_0$.
\end{lem}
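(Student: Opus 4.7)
Let $F = (F_0,F_1,F_2,F_3)$ be a coprime polynomial lift of $f$ and set $U = \spn_{\CC}\{F(z):z\in S^2\} \subseteq \CC^4$; non-fullness of $f$ means $\dim U \leq 3$. Using the complex-bilinear skew form $\omega(v,w) := \inn{v,J_0 w}^{\!\cc} = v_1 w_0 - v_0 w_1 + v_3 w_2 - v_2 w_3$, the horizontality condition \eqref{horizontal} is precisely $\omega(F'(z),F(z)) \equiv 0$. The strategy is to show that $F$ in fact takes values in a $2$-dim\-en\-sion\-al Lagrangian subspace $W$ of $(\CC^4,\omega)$; since $\spn(e_0,e_2)$ is manifestly Lagrangian and $\Sp(2)$ acts transitively on the Lagrangian Grassmannian of $(\CC^4,\omega)$, some $A \in \Sp(2)$ will then send $W$ to $\spn(e_0,e_2)$, so that $A \circ f$ lands in $\CP^1_0$.

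The cases $\dim U \leq 2$ are immediate: if $\dim U = 1$ then $f$ is constant; if $\dim U = 2$ then either $f$ is constant or $F,F'$ span $U$ on a dense set, in which case $\omega(F',F) \equiv 0$ together with the skew-symmetry of $\omega$ on the $2$-dim\-en\-sion\-al space $U$ forces $\omega|_U = 0$, so $W := U$ is Lagrangian. The substance of the lemma is the case $\dim U = 3$.

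For this case, observe that $\omega|_U$ is a skew form on an odd-dim\-en\-sion\-al space, so it is degenerate; its radical $L$ has dimension exactly $1$ (it cannot be larger, since $\omega$ is non-degenerate on $\CC^4$ forces $\dim L \leq 4 - \dim U = 1$). The quotient $U/L$ is $2$-dim\-en\-sion\-al and inherits a non-degenerate symplectic form $\bar\omega$; letting $\bar F$ denote the post-composition of $F$ with the projection $U \to U/L$, the horizontality equation descends to $\bar\omega(\bar F',\bar F) \equiv 0$. On a $2$-dim\-en\-sion\-al symplectic space, $\bar\omega$ is (up to a constant) the determinant on $\wedge^2(U/L)$, so this is the Wronskian condition $\bar F \wedge \bar F' \equiv 0$, which says that $\bar F$ and $\bar F'$ are pointwise linearly dependent. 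A standard argument (solve $\bar F' = \lambda(z)\bar F$ locally, or directly note that a holomorphic map $S^2 \to (U/L)\setminus\{0\}$ with vanishing Wronskian must be a scalar multiple of a fixed vector) shows that $\bar F$ takes values in a $1$-dim\-en\-sion\-al subspace of $U/L$. Pulling back, $F$ takes values in a $2$-dim\-en\-sion\-al subspace $W \subset U$ containing $L$; because $L$ is in the radical of $\omega|_U$, the restriction $\omega|_W$ is zero, so $W$ is Lagrangian.

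The principal obstacle is precisely this $\dim U = 3$ step: one has to rule out that a horizontal holomorphic curve could genuinely fill out a $\CP^2 \subset \CP^3$, and the argument above does so by exploiting the automatic degeneracy of $\omega$ on an odd-dim\-en\-sion\-al subspace together with the Wronskian rigidity in the symplectic quotient. The final $\Sp(2)$-transitivity step is classical and presents no difficulty.
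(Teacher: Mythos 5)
Your proof is correct, but it takes a genuinely different and more conceptual route than the paper's. The paper argues in coordinates: one $\Sp(2)$-move puts the image in the hyperplane $\{F_1 = 0\}$ (using that $\Sp(2)$ acts transitively on $\CP^3$, hence on hyperplanes), at which point the horizontality equation \eqref{horizontal} collapses to $F_2 F_3' - F_3 F_2' = 0$, forcing $F_3/F_2$ to be a constant; a second $\Sp(2)$-move kills that constant. Your argument instead treats horizontality as the condition $\omega(F',F)\equiv 0$ for the symplectic form $\omega(v,w)=\inn{v,J_0w}^{\cc}$, reduces by linear algebra (automatic degeneracy of $\omega$ on an odd-dimensional subspace, passage to the symplectic quotient $U/L$, and the Wronskian rigidity there) to show the image lies in a Lagrangian plane $W$, and then invokes transitivity of $\Sp(2)$ on the Lagrangian Grassmannian. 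The paper's route is shorter and requires only transitivity of $\Sp(2)$ on $\CP^3$; yours makes the symplectic geometry explicit and isolates the real content --- that a horizontal holomorphic curve cannot genuinely fill a $\CP^2$ --- as a statement about radicals of $\omega|_U$, which also explains conceptually why the horizontal $\CP^1$s form the Lagrangian Grassmannian $\Sp(2)/\U(2)$ noted by the authors just after the lemma. One small omission: in the $\dim U=1$ case you observe $f$ is constant but do not close the loop; a constant map is handled at once by transitivity of $\Sp(2)$ on $\CP^3$, so this is harmless.
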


\begin{proof} First, we can choose $A_1 \in \Sp(2)$ such that $\wt{f} = A_1 \circ f$ has the form
$\wt{f} = [F_0,0,F_2,F_3]$. Then the horizontality condition \eqref{horizontal} gives
$$
F_2 F_3' - F_3 F_2' = 0
$$
which implies that $F_3/F_2$ is constant.  We next choose $A_2 \in \Sp(2)$ such that, after  composing with $A_2$, that constant is zero.  Then $A_2 \circ A_1\circ f$ has image in $\CP^1_0$.
\end{proof}

The proof shows that $\Sp(2)$ acts transitively on the set of horizontal $\CP^1$s; clearly the isotropy group is a copy of $\U(2)$, hence the set of horizontal $\CP^1$s can be identified with the complex homogeneous space $\Sp(2)/\U(2)$ of dimension $3$.

Fix $d \in \{1,2,\ldots\}$.  It is clear that $\Hol_d(S^2,\CP^1_0)$ is a complex submanifold of $\Hol_d(S^2,\CP^3)$ of dimension $2d+1$.  We deduce the following.

\begin{prop} \label{prop:space-nonfull}
The space $\HHol_d^{\nf}(S^2,\CP^3)$ is a complex submanifold of\/ $\Hol_d(S^2,\CP^3)$ of dimension $2d+4$.
\end{prop}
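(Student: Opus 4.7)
The plan is to realize $\HHol_d^{\nf}(S^2,\CP^3)$ as the image of a holomorphic embedding from an explicit $(2d{+}4)$-dimensional complex manifold $X$. First I parametrize the horizontal lines in $\CP^3$: writing $\omega(X,Y) = \inn{X,J_0Y}^{\cc}$ for the complex symplectic form on $\CC^4$, the horizontality condition \eqref{horizontal} shows that a projective line arising from a $2$-plane $P \subset \CC^4$ is horizontal exactly when $P$ is $\omega$-Lagrangian. The resulting Lagrangian Grassmannian $H$ is a closed complex submanifold of $G_2(\CC^4)$ of complex dimension $3$, matching the identification with $\Sp(2)/\U(2)$ given after Lemma \ref{lem:CP0^1}.

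Next, let $E \to H$ be the tautological rank-$2$ holomorphic vector bundle with fiber $E_L = L$, and let $\P(E) \to H$ be its projectivization, a holomorphic $\CP^1$-bundle. Define
\begin{equation*}
X = \{(L,f) : L \in H,\ f \in \Hol_d(S^2, \P(E_L))\}.
\end{equation*}
In local holomorphic trivializations of $E$, $X$ becomes the total space of a holomorphic fiber bundle over $H$ whose fibers are copies of $\Hol_d(S^2,\CP^1)$, already known to be a complex manifold of dimension $2d{+}1$; hence $X$ is a complex manifold of dimension $3 + (2d{+}1) = 2d{+}4$. The inclusion $E \hookrightarrow H \times \CC^4$ induces a holomorphic map $p:\P(E) \to \CP^3$ that restricts on each fiber to the inclusion $\P(E_L) \hookrightarrow \CP^3$; postcomposition gives a holomorphic map
\begin{equation*}
\Theta : X \to \Hol_d(S^2,\CP^3),\qquad (L,f)\mapsto p \circ f.
\end{equation*}
By Lemma \ref{lem:CP0^1}, $\Theta(X) = \HHol_d^{\nf}(S^2,\CP^3)$; and because for $d \geq 1$ each map in $\HHol_d^{\nf}(S^2,\CP^3)$ is a non-constant holomorphic map from $S^2$ to a $\CP^1$ and so surjects onto it, the line $L$ is uniquely determined by the map, making $\Theta$ injective.

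To finish, I construct a holomorphic local inverse. Near any $\phi_0 = \Theta(L_0, f_0)$ pick $z_1, z_2 \in S^2$ with $F_0(z_1), F_0(z_2) \in \CC^4$ linearly independent (possible since $\phi_0$ is non-constant). For $\phi = [F]$ near $\phi_0$ in $\HHol_d^{\nf}(S^2,\CP^3)$, the assignment $\phi \mapsto \spn_{\CC}\{F(z_1),F(z_2)\}$ is well defined independently of the scalar ambiguity in $F$ and gives $L(\phi) \in H$ depending holomorphically on $\phi$; once $L(\phi)$ is recovered, $\phi$ itself is tautologically a holomorphic map into $\P(E_{L(\phi)})$. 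Hence $\Theta$ is a biholomorphism onto its image, endowing $\HHol_d^{\nf}(S^2,\CP^3)$ with the structure of a complex submanifold of $\Hol_d(S^2,\CP^3)$ of dimension $2d{+}4$.

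The main technical point is constructing $X$ as a complex manifold of dimension $2d{+}4$ via the Lagrangian Grassmannian $H$ and its tautological bundle --- in particular verifying that holomorphic trivializations of $E$ fit together to give $X$ the claimed fiber-bundle structure. Once this is in place, injectivity, surjectivity onto $\HHol_d^{\nf}$, and the explicit holomorphic inverse are straightforward.
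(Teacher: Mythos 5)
Your proof is correct and follows essentially the same route as the paper: fibering $\HHol_d^{\nf}(S^2,\CP^3)$ over the $3$-dimensional Lagrangian Grassmannian $\Sp(2)/\U(2)$ of horizontal $\CP^1$'s with fibres $\Hol_d(S^2,\CP^1)$ of dimension $2d+1$, and checking that this complex structure is the one induced from $\Hol_d(S^2,\CP^3)$. You merely spell out explicitly (via the tautological bundle $E\to H$ and the holomorphic local inverse) what the paper compresses into ``easily seen to be a locally trivial fibre bundle \dots\ Since its topology and complex structure are those induced \dots.''
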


\begin{proof}
The map $\HHol_d^{\nf}(S^2,\CP^3) \to \Sp(2)/\U(2)$ given by mapping $f$ to its image --- a horizontal $\CP^1$ ---  is easily seen to be a locally trivial fibre bundle with fibres biholomorphic to $\Hol_d(S^2,\CP^1)$.  So $\HHol_d^{\nf}(S^2,\CP^3)$ is a complex manifold of dimension $(2d+1)+3 = 2d+4$.  Since its topology and complex structure are those induced from $\Hol_d(S^2,\CP^3)$, it is actually  a complex \emph{sub}manifold of $\Hol_d(S^2,\CP^3)$.
\end{proof}

Now, $\HHol_d(S^2,\CP^3)$ is the disjoint union of $\HHol_d^{\ff}(S^2,\CP^3)$ and $\HHol_d^{\nf}(S^2,\CP^3)$.  Although $\HHol_d^{\nf}(S^2,\CP^3)$ is closed in $\HHol_d(S^2,\CP^3)$, the subspace $\HHol_d^{\ff}(S^2,\CP^3)$ is not; this motivates the following definition.

\begin{defn} \label{def:collapse}
Say that $f \in \HHol_d^{\nf}(S^2,\CP^3)$ is a \emph{collapse point} if it is in $\ov{\HHol_d^{\ff}(S^2,\CP^3)} \cap \HHol_d^{\nf}(S^2,\CP^3)$.
\end{defn}

Thus a non-full horizontal holomorphic map $f$ is a collapse point if it occurs as the limit of a \emph{sequence} of full horizontal holomorphic maps; since $\HHol_d(S^2,\CP^3)$ is an algebraic variety, this is equivalent to the existence of a one-parameter \emph{family} of full horizontally holomorphic maps which `collapses' to $f$.

The set of collapse points $f$ forms a proper algebraic subvariety of $\HHol_d^{\nf}(S^2,\CP^3)$.
For $d=1$ or $2$, $\HHol_d^{\ff}(S^2,\CP^3)$ is empty so that the set of collapse points is empty. The following examples show that, for each $d \geq 3$, the set of collapse points is non-empty; see \S \ref{sec:collapse} for more information on that set.   

\begin{exm} \label{ex:d3} Let $f_t(z) = [z,3tz^2,z^3+1,t]$.  For $t \neq 0$, this defines a full horizontal holomorphic map $f_t:S^2 \to \CP^3$ of degree $3$.  As $t \to 0$, it approaches the non-full horizontal holomorphic map $f_0:S^2 \to \CP^3$ of the same degree given by $f_0(z) = [z,0,z^3+1,0]$; by definition, the map $f_0$ is a collapse point.
\end{exm}

This is the only example for $d=3$ up to fractional linear transformations, see \S \ref{sec:collapse}.   For higher values of $d$, there is a greater variety of examples; we give one family (equivalent to the previous one when $d=3$).

\begin{exm} \label{ex:d-arb} \cite{Ej-min,EjKo-extra} Let $d \geq 3$. For any $\alpha \neq 0$ and any positive integers $\ell, k$ with $\ell \neq k$ and $\ell + k = d$, let
$$
f_t(z) = \left[z^k - \al\,,\, t\,\frac{\ell+k}{\ell-k}z^{\ell}\,,\, z^{\ell}\Big(z^k - \al\frac{\ell+k}{\ell-k} \Big)\,,\, t \right]\!.
$$
For $t \neq 0$, this defines a full horizontal holomorphic map $f_t:S^2 \to \CP^3$ of degree $d$.
As $t \to 0$, it approaches the non-full horizontal holomorphic map $f_0:S^2 \to \CP^3$ of the same degree given by 
$f_0(z) =  \bigl[z^k - \al\,, 0, z^{\ell}\bigl(z^k - \al\frac{\ell+k}{\ell-k}\bigr), 0\bigr]$; 
by definition, the map $f_0$ is a collapse point.
\end{exm} 

Let $f \in \HHol_d^{\nf}(S^2,\CP^3)$; without loss of generality, by Lemma \ref{lem:CP0^1}, we can assume that $f \in \Hol_d(S^2,\CP^1_0)$.  Write $f = [F]$ where $F = (F_0, 0,F_2,0)$ is a holomorphic lift.  Let $u$ be a holomorphic vector field along $f$.   As in \S \ref{subsec:inf-hol}, $u = \dd p_F(\U)$ for some holomorphic $\U = (\U_0,\U_1,\U_2,\U_3)$.    Let $u = u^T + u^{\perp}$ be the decomposition of $u$ into components tangential and normal to $\CP^1_0$.  Then $u^T = \dd p_F(\U^T)$ and $u^{\perp} =  \dd p_F(\U^{\perp})$ where $\U^T = (\U_0, 0, \U_2,0)$ and $\U^{\perp} = (0,\U_1, 0, \U_3)$, respectively.

Now, from \eqref{inf-hor},  $u$ is an infinitesimal horizontal deformation if and only if
\begin{equation} \label{Q-NF0}
\dd Q_F(\U) =0
\end{equation}
where
\begin{equation} \label{Q-NF}
\dd Q_F(\U) = - F_0 \U_1' + F_0' \U_1 - F_2 \U_3' + F_2' \U_3\,.
\end{equation}
Note that this condition is well defined and only involves the normal part $u^{\perp}$ of $u$.

On taking $F$ and $\U$ to be polynomial of degree $\leq d$, the differential $\dd Q_F$  can be thought of as a linear map from $\CC[z]_d^2$ to $\CC[z]_{2d-2}$, so has kernel of dimension
$\dim\ker\dd Q_F = 2d+2 - \dim\image\;\dd Q_F \geq 3$.
There are three obvious solutions to this: $(\U_1,\U_3) = (F_0,0)$, $(0,F_2)$ and $(F_2,F_0)$.  It is clear that these span the three-dimensional subspace of $T_f\HHol_d^{\nf}(S^2,\CP^3)$ which is tangent to the action of $\Sp(2)$; thus they are all integrable.
Say that $u$ is an \emph{extra IHHD} if $u^{\perp}$ is not in this three-dimensional space; equivalently, $u$ is not tangent to $\HHol_d^{\nf}(S^2,\CP^3)$.  

\begin{prop} \label{prop:jn-pt}
Let $f:S^2 \to \CP^3$ be a non-full horizontal holomorphic map.   Then the following are equivalent.

{\rm (i)} $f$ is a non-smooth point of\/ $\HHol_d(S^2,\CP^3)$;

{\rm (ii)} $f$ is a collapse point;

{\rm (iii)} $f$ has an extra IHHD;

{\rm (iv)} $f$ has an extra IHHD which is not integrable by horizontal holomorphic maps;

{\rm (v)} $f$ has an extra IHHD which is integrable by horizontal holomorphic maps.
\end{prop}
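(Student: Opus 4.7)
The plan is to establish the five-way equivalence via the chain (i)$\Leftrightarrow$(ii)$\Leftrightarrow$(iii)$\Leftrightarrow$(iv) and the separate equivalence (iii)$\Leftrightarrow$(v), with (ii)$\Rightarrow$(v) being the substantive step. The first chain follows from Proposition~\ref{prop:smooth}, Verdier's theorem, and dimension counting, while the last implication requires a collapse-family construction that I expect to be the main obstacle.

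For (iii)$\Leftrightarrow$(i), I would invoke Proposition~\ref{prop:smooth}: $f$ is regular iff the IHHD space has dimension $2d+4$. Since $T_f\HHol^{\nf}_d$ is a $(2d+4)$-dimensional subspace of this space by Proposition~\ref{prop:space-nonfull}, the absence of extras coincides with this dimension bound, hence with regularity, and hence (since $\HHol_d$ has pure dimension $2d+4$) with smoothness. For (i)$\Leftrightarrow$(ii), Verdier's theorem gives $\HHol_d = \HHol^{\nf}_d \cup \ov{\HHol^{\ff}_d}$ as the decomposition into distinct irreducible components of equal dimension $2d+4$ (for $d \geq 3$; for $d \leq 2$ both conditions are vacuously false). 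A smooth point lies in a unique irreducible component, and since $\HHol^{\nf}_d$ is itself smooth by Proposition~\ref{prop:space-nonfull}, $f \in \HHol^{\nf}_d$ is smooth in $\HHol_d$ iff $f \notin \ov{\HHol^{\ff}_d}$, i.e.\ iff $f$ is not a collapse point. Then (iii)$\Rightarrow$(iv) follows because non-regularity supplies non-integrable IHHDs via Proposition~\ref{prop:smooth}, while every vector of $T_f\HHol^{\nf}_d$ is integrable (via a path in the smooth manifold $\HHol^{\nf}_d$), so the non-integrable IHHDs must be extra; the reverse (iv)$\Rightarrow$(iii) and (v)$\Rightarrow$(iii) are trivial.

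For the substantive step (ii)$\Rightarrow$(v), I would start from a real-analytic collapse family $\{f_t = [F(t)]\}$ of horizontal holomorphic maps with $f_0 = f$ and $f_t$ full for small $t\neq 0$. Its tangent $u = \dot f_0$ is an IHHD integrable by horizontal holomorphic maps by construction; the goal is to arrange $u$ to be extra. After the $\Sp(2)$-reduction of Lemma~\ref{lem:CP0^1} yielding $F = (F_0, 0, F_2, 0)$, I would expand $F(t) = F + t\,\U + O(t^2)$ and note that leading-order fullness amounts to the linear independence of $F_0, F_2, \U_1, \U_3$; when this holds, $\U_1 \notin \spn(F_0, F_2)$, forcing $(\U_1, \U_3) \notin \spn\{(F_0,0),(0,F_2),(F_2,F_0)\}$ and making $u$ extra. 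In the degenerate case where leading-order fullness fails, I would conjugate by an $\Sp(2,\CC)$-element $A(t) = \exp(tX)$ chosen to normalise $(\U_1, \U_3) = 0$ in the transformed family $\wt f_t$; the quadratic coefficient $\wt V$ then carries $(\wt V_1, \wt V_3)$ outside $\spn(F_0, F_2)^2$ (forced by the fullness of $\wt f_t$ for $t\neq 0$), which I would use to extract the extra integrable IHHD.

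The main obstacle is the degenerate case: rigorously producing an \emph{integrable} extra IHHD from the quadratic-order information, rather than merely an extra Zariski tangent vector. This is equivalent to excluding the pathological configuration $T_f\ov{\HHol^{\ff}_d} \subseteq T_f\HHol^{\nf}_d$ at a collapse point, which is permitted by abstract algebraic geometry (as the elementary example of a line and a parabola tangent at a point shows) and must be ruled out using the specific polynomial structure of the horizontality map $Q_d$ together with the fact that $\ov{\HHol^{\ff}_d}$ and $\HHol^{\nf}_d$ are \emph{distinct} irreducible components of $\HHol_d$ that cannot share a tangent space compatible with their distinct polynomial ideals.
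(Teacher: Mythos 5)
You have the easy equivalences right, and your worry about a possible tangency between the two irreducible components is a legitimate one in abstract algebraic geometry, but you have missed the key structural observation that the paper uses to make the hard step almost trivial, and your proposal is left with an acknowledged gap.

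The paper does not try to prove (ii)$\Rightarrow$(v) by expanding a given collapse family. Instead it proves \emph{(iii)$\Rightarrow$(v) and (iii)$\Rightarrow$(ii) directly} from an extra IHHD $u = \dd p_F(\U)$. The point you missed: after the $\Sp(2)$-reduction so that $F = (F_0,0,F_2,0)$ and the perturbation is normal, $\U^\perp = (0,\U_1,0,\U_3)$, the horizontality function satisfies
$$
Q\bigl(F + t\,\U^\perp\bigr) \;=\; t\bigl(\U_1 F_0' - F_0\U_1' + \U_3 F_2' - F_2\U_3'\bigr) \;=\; t\,\dd Q_F(\U^\perp),
$$
because every monomial of $Q$ pairs an \emph{even} component (index $0$ or $2$) with an \emph{odd} one (index $1$ or $3$), and only the odd components of $F + t\U^\perp$ depend on $t$. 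Hence $Q$ is \emph{exactly linear}, not merely linearizable, along the normal direction at a non-full $F$. So $\dd Q_F(\U^\perp) = 0$ forces the whole projective line $f_t = [F + t\U^\perp]$ to be horizontal (and of course holomorphic of degree $d$), for \emph{all} $t$. That gives integrability of the extra IHHD for free. Fullness of $f_t$ for small nonzero $t$ then follows by the argument you would want: $t \mapsto f_t$ is an algebraic curve in $\HHol_d(S^2,\CP^3)$, $\HHol^{\nf}_d$ is a closed subvariety, so if $f_t$ were non-full for a sequence $t_n \to 0$ the whole line would lie in $\HHol^{\nf}_d$ and $u$ would be tangent to it, contradicting extraness. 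This simultaneously yields (v) and (ii) from (iii). Combined with (i)$\Leftrightarrow$(ii), (i)$\Rightarrow$(iv)$\Rightarrow$(iii), (v)$\Rightarrow$(iii), the cycle closes.

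So the ``main obstacle'' you flag — ruling out $T_f\ov{\HHol^{\ff}_d} \subseteq T_f\HHol^{\nf}_d$ by examining quadratic-order data — never arises if you start from the extra IHHD rather than from the collapse family. Your strategy of normalising by $\exp(tX) \in \Sp(2,\CC)$ and going to second order is more work, and as written does not close the argument; and the appeal to ``distinct polynomial ideals cannot share a tangent space'' is not a theorem (tangential irreducible components do exist in general). The linearity of $Q$ in the odd slots at a non-full map is exactly the specific polynomial structure that rules the pathology out, and it does so in one line.
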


\begin{proof}
The equivalence of (i) and (ii) holds since, by Proposition \ref{prop:space-nonfull}, $\HHol_d^{\nf}(S^2,\CP^3)$ is a submanifold of $\HHol_d(S^2,\CP^3)$ at $f$ if and only if $f$ is not the limit of maps in $\HHol_d^{\ff}(S^2,\CP^3)$.

That (i) implies (iv) follows from Proposition  \ref{prop:smooth}.  That (iv) or (v) implies (iii) is trivial.

To show that (iii) implies (v) and (ii),  suppose that $u = \dd p_F(\U)$ is an extra IHHD along $f$.   Then, by \eqref{Q-NF0}, $f_t = [F_t +t\U^{\perp}]$ is actually horizontal for all $t$.  We claim that there exists $\ep >0$ such that $f_t$ is full for all non-zero $t$ with $|t| < \ep$.  If not, there would be a sequence of non-zero values of $t$ tending to $0$ such that $f_t$ is not full. But this would imply that $u$ is tangent to $\Hol_d^{\nf}(S^2,\CP^3)$, which it is not.   Thus $f$ is a collapse point, and $u^{\perp}$ is an extra IHHD which is integrable.
\end{proof}

Recalling Definition \ref{def:regular-ho}, we deduce

\begin{cor} \label{cor:smooth-reg}  A non-full horizontal holomorphic map $f:S^2 \to \CP^3$ is a smooth point of\/ $\HHol_d(S^2,\CP^3)$ if and only if it is a regular point. 
\qed
\end{cor}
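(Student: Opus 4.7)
The plan is to combine Proposition \ref{prop:jn-pt} with Proposition \ref{prop:smooth}, using the known dimension of $\HHol_d^{\nf}(S^2,\CP^3)$ coming from Proposition \ref{prop:space-nonfull} as the numerical glue between the two characterizations.

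The ``regular $\Rightarrow$ smooth'' direction requires no new argument: this was already recorded in the remark immediately following Definition \ref{def:regular-ho}, as a straight application of the Inverse Function Theorem to the defining map $Q_d$ of the algebraic variety $\HHol_d(S^2,\CP^3) \subset V_d \subset \CP^{4d+3}$. This direction does not use non-fullness and is available for any horizontal holomorphic map.

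For the converse, I would assume $f \in \HHol_d^{\nf}(S^2,\CP^3)$ is a smooth point of $\HHol_d(S^2,\CP^3)$ and deduce regularity as follows. By the equivalence (i)$\Leftrightarrow$(iii) of Proposition \ref{prop:jn-pt}, the hypothesis that $f$ is smooth means that $f$ admits no extra IHHD, so every IHHD along $f$ is tangent to $\HHol_d^{\nf}(S^2,\CP^3)$. Since the tangent vectors to $\HHol_d^{\nf}(S^2,\CP^3)$ at $f$ are exactly the holomorphic sections satisfying the infinitesimal horizontality equation \eqref{Q-NF0}, i.e.\ the IHHDs tangent to this submanifold, and since $\HHol_d^{\nf}(S^2,\CP^3)$ has dimension $2d+4$ by Proposition \ref{prop:space-nonfull}, we get
\[
\dim_{\CC} \{\text{IHHDs along } f\} \;\leq\; 2d+4.
\]
Combined with the universal lower bound $\dim \geq 2d+4$ established in the discussion preceding Proposition \ref{prop:smooth} (via the kernel-image count for the linear map $\dd Q_F : \CC[z]_d^4 \to \CC[z]_{2d-2}$), equality must hold. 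Then Proposition \ref{prop:smooth}, (ii)$\Rightarrow$(i), delivers regularity of $f$.

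There is no real obstacle; the corollary is a tidy dimensional consolidation of the preceding propositions. The only point worth checking carefully is the identification of IHHDs at $f$ tangent to $\HHol_d^{\nf}(S^2,\CP^3)$ with the Zariski tangent space to that submanifold at $f$ — but this is immediate from the definitions since $\HHol_d^{\nf}(S^2,\CP^3)$ is cut out (inside the smooth ambient $\Hol_d(S^2,\CP^3)$) precisely by the horizontality condition $Q_d=0$.
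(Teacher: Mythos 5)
Your argument is correct and is exactly the route the paper has in mind (the corollary carries only a \verb|\qed|, leaving the reader to combine Proposition \ref{prop:jn-pt} with Propositions \ref{prop:smooth} and \ref{prop:space-nonfull} precisely as you do). One small factual slip in your closing aside: $\HHol_d^{\nf}(S^2,\CP^3)$ is \emph{not} cut out inside $\Hol_d(S^2,\CP^3)$ by $Q_d=0$ alone --- that zero set is all of $\HHol_d(S^2,\CP^3)$, full maps included; the identification you need (that $T_f\HHol_d^{\nf}$ consists of IHHDs and has dimension $2d+4$) comes instead from the explicit fibre-bundle description in Proposition \ref{prop:space-nonfull}, which you already invoke, so the slip is harmless to the argument.
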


It follows from Proposition \ref{prop:jn-pt} that we can determine whether a point $f$ is a collapse point by finding all solutions $(\U_1,\U_3)$ to the linear equations \eqref{Q-NF0} with $\U_1$ and $\U_3$ polynomials of degree $\leq d$.  If the space of all solutions is more than $3$-dimensional, then there are extra IHHDs and $f$ is a collapse point; otherwise it is not.  It is an exercise in linear algebra to see, for example, that  $f:S^2 \to S^2$ defined by $z \mapsto z^d$ is \emph{not} a collapse point for any $d$; this also follows from Lemma \ref{lem:Q-subm}.

\subsection{Infinitesimal deformations of non-full harmonic maps} \label{subsec:inf-ha}
Fix $d \in \{1,2,\ldots\}$.  By Theorem \ref{th:smooth},  the twistor projection \eqref{Pi-d-imm} maps $\HHol_d^{\nf}(S^2,\CP^3)$ diffeomorphically onto the space $\Harm_d^{\nf}(S^2,S^4)$.  Since the former is a complex-analytic submanifold of $\Hol_d(S^2,\CP^3)$, the latter is a real-analytic submanifold of $R_d$.  We now look at maps in $\Harm_d^{\nf}(S^2,S^4)$ in more detail.

Let  $S^2_0 = \{(x_1,x_2,x_3,0,0) \in S^4\}$; this is a totally geodesic $S^2$ in $S^4$.  {}From \cite{Ca2} we have

\begin{prop} 
Any non-full harmonic map $\phi:S^2 \to S^4$ is the composition of a weakly conformal map with image a totally geodesic $S^2$ and the inclusion map $i:S^2 \hookrightarrow S^4$.    In fact, up to isometries of $S^4$, we may assume that $\phi = i \circ \phi_0$ for some holomorphic map $\phi_0:S^2 \to S^2_0$. 
\qed \end{prop}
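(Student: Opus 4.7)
The plan is to reduce to the case of a harmonic map $\psi:S^2 \to S^3$ and then exploit the real isotropy \eqref{isotropy} to force the image into a totally geodesic $S^2$; the holomorphicity claim then follows from the classical $\pm$-holomorphic dichotomy for conformal harmonic maps between $2$-spheres.

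Since $\phi$ is non-full, its image lies in some totally geodesic $S^3\subset S^4$, which, after an isometry of $S^4$, we may take to be $\{x_4=0\}\cap S^4$. The problem reduces to studying a harmonic map $\psi:S^2 \to S^3$. Write $\Psi=i\circ\psi:S^2\to\RR^4\subset\CC^4$, and for a local complex coordinate $z$ on $S^2$ consider at each $x\in S^2$
$$L(x) = \spn_{\CC}\bigl\{\pa^r\Psi/\pa z^r(x) : r\ge 1\bigr\} \subset \CC^4.$$
By \eqref{isotropy}, $L(x)$ is isotropic for $\inn{\cdot,\cdot}^{\cc}$; and by \eqref{isotropy} with $s=0$, $\Psi(x)$ is $c$-orthogonal to every $\pa^r\Psi/\pa z^r$, i.e., $\Psi(x)\in L(x)^{\perp_{\cc}}$.

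The crucial step is the estimate $\dim_{\CC} L(x)\le 1$. A priori $\dim L(x)\le 2$, since $\CC^4$ carries no isotropic $3$-plane; but if $\dim L(x)=2$, then $L(x)$ is a \emph{maximal} isotropic subspace of $\CC^4$ and hence self-$c$-orthogonal, giving $\Psi(x)\in L(x)^{\perp_{\cc}}=L(x)$. Since $\Psi(x)$ is a real unit vector, $\inn{\Psi(x),\Psi(x)}^{\cc}=1\ne 0$, contradicting the isotropy of $L(x)$. Therefore at every immersion point $\pa^2\Psi/\pa z^2$ is a scalar multiple of $\pa\Psi/\pa z$. Combined with the harmonicity relation $\pa^2\Psi/\pa z\,\pa\bar z=-|\pa\Psi/\pa z|^2\,\Psi$ from \eqref{ha-sphere} and its conjugate, this shows that the rank-$3$ complex subbundle
$$W(x) = \spn_{\CC}\bigl\{\Psi(x),\, \pa\Psi/\pa z(x),\, \pa\Psi/\pa\bar z(x)\bigr\}$$
is stable under $\pa/\pa z$ and $\pa/\pa\bar z$ on the open dense set of immersion points. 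Since $S^2\times\CC^4$ is a flat bundle, $W$ is locally constant there, hence constant by connectedness; by continuity $\Psi(x)$ then lies in this fixed subspace for every $x$. Conjugation-invariance of $W$ gives $W=V\otimes\CC$ for a real $3$-plane $V\subset\RR^4$, and $\Psi(S^2)\subset V\cap S^3$ is a totally geodesic $S^2$.

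A further isometry of $S^4$ carries this totally geodesic $S^2$ onto $S^2_0$, yielding $\phi=i\circ\psi_0$ with $\psi_0:S^2\to S^2_0$ a non-constant weakly conformal harmonic map. At each point $\dd\psi_0$ is conformal linear, hence either orientation-preserving or orientation-reversing, and by connectedness the same alternative holds globally, so $\psi_0$ is either holomorphic or antiholomorphic; composing with an orientation-reversing isometry of $S^2_0$ (extended to $S^4$) if necessary yields a holomorphic $\phi_0$. I expect the main obstacle to be the dimension estimate $\dim L\le 1$: it relies essentially on the fact that $\CC^4$ is precisely the dimension in which a maximal isotropic subspace is self-orthogonal, combined with the real vector $\Psi$ sitting in $L^{\perp_{\cc}}$ --- a low-dimensional coincidence that fails in $\CC^5$, and is exactly what forces any minimal $S^2$ in $S^3$ to be totally geodesic while permitting the rich twistor geometry of $S^4$.
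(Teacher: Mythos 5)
The paper itself offers no proof of this proposition: it simply cites Calabi \cite{Ca2} and appends a \qed. Your argument therefore cannot be compared with the paper's proof in the usual sense, but it is a correct, self-contained proof, and it is essentially Calabi's directrix argument specialised to codimension one. The structure is sound: reduce to $\psi:S^2\to S^3$ (the restriction of a harmonic map to a totally geodesic submanifold containing its image is harmonic); invoke real isotropy \eqref{isotropy} of harmonic $2$-spheres in spheres to see that $L(x)=\spn_{\CC}\{\pa^r\Psi/\pa z^r : r\ge 1\}$ is isotropic in $\CC^4$ and that the real unit vector $\Psi(x)$ lies in $L(x)^{\perp_\cc}$; then the low-dimensional coincidence that a $2$-dimensional isotropic subspace of $\CC^4$ is its own $c$-orthogonal complement forces $\dim_{\CC}L(x)\le 1$, since $\inn{\Psi,\Psi}^{\cc}=1$. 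This is the crux, and you have it right. The conclusion that $W=\spn_{\CC}\{\Psi,\pa\Psi/\pa z,\pa\Psi/\pa\bar z\}$ is a rank-$3$ subbundle stable under differentiation on the (connected, open, dense) set of immersion points, hence a constant subspace, is a standard parallel-subbundle / ODE argument in a flat bundle; conjugation-invariance gives $W=V\otimes\CC$ with $V$ a real $3$-plane, and $\Psi(S^2)\subset V\cap S^3$. The final step — that a non-constant weakly conformal harmonic map between $2$-spheres is $\pm$-holomorphic — is the classical dichotomy; you state it a touch loosely (the alternative is only determined off the branch points, but that set is connected and the conclusion extends by continuity), which is a standard remedy. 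Overall, a clean and correct proof of a statement the paper leaves to the literature, and you correctly identify the dimension coincidence in $\CC^4$ as the reason minimal $2$-spheres in $S^3$ are totally geodesic while $S^4$ admits genuinely full ones.
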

 
Now the twistor projection \eqref{twistor-CP3} maps $\CP^1_0$ onto $S^2_0$  isometrically; in fact, it is just the standard identification \eqref{CP1S2} of $\CP^1$ with $S^2$. Let the twistor lift of $\phi:S^2 \to S^2_0 \hookrightarrow S^4$ be $f:S^2 \to \CP^3$.  Then $f = i \circ f_0$ where $i:\CP^1_0 \hookrightarrow \CP^3$ is the inclusion map and $f_0:S^2 \to \CP^1_0$ is holomorphic.  The twistor projection identifies $f_0$ with $\phi_0$, so that \emph{the non-full map $\phi$ and its twistor lift $f$ may be considered to be the same map}.

The Jacobi equation simplifies considerably for non-full maps, as follows.

\begin{lem}
A vector field along a non-constant non-full harmonic map $\phi:S^2 \to S^4$ is Jacobi if and only if its tangential and normal parts are Jacobi.  Further, the tangential part is Jacobi if and only if it is conformal.  In fact, if\/ $\phi$ is the composition of a holomorphic map $S^2 \to S^2_0$ and the inclusion map, then the tangential part is holomophic.
\end{lem}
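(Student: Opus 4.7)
The plan is to prove the three assertions in sequence, with the splitting of the Jacobi operator as the foundation from which the remaining two parts follow by reduction to the harmonic map $\phi_0:S^2\to S^2_0$.

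First I would establish the decomposition $\JJ_\phi(v) = \JJ_\phi(v^T) + \JJ_\phi(v^\perp)$. Write $\phi^{-1}TS^4 = \tau \oplus \nu$ with $\tau = \phi^{-1}TS^2_0$ and $\nu$ the pull-back of the normal bundle of $S^2_0$ in $S^4$. Because $S^2_0 \subset S^4$ is totally geodesic, $\na^{S^4}$ preserves the bundle splitting $TS^4|_{S^2_0} = TS^2_0 \oplus \nu(S^2_0)$; pulling back, $\na^\phi$ preserves $\tau \oplus \nu$, so the rough Laplacian $\Delta^\phi$ does as well. For the curvature term, since $S^4$ has constant sectional curvature $1$, $R^{S^4}(X,Y)Z = \inn{Y,Z}X - \inn{X,Z}Y$; and since $\dd\phi$ takes values in $\tau$, a short computation yields
\[
\Tr R^{S^4}(\dd\phi,v)\dd\phi = \sum_i \inn{v^T,\dd\phi(e_i)}\,\dd\phi(e_i) - |\dd\phi|^2 v \,,
\]
whose first term is tangential and whose second respects the splitting. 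Hence $\JJ_\phi$ splits as a direct sum, and $v$ is Jacobi iff $v^T$ and $v^\perp$ each are.

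Next, to show that the tangential part is Jacobi if and only if it is conformal, I would identify $v^T$ with a vector field along the harmonic map $\phi_0:S^2\to S^2_0$; the splitting above matches the Jacobi equation for $v^T$ along $\phi$ with the Jacobi equation for $v^T$ along $\phi_0$. The forward implication is immediate from the fact recalled in \S\ref{subsec:ha-map} that any Jacobi field along a harmonic map from $S^2$ preserves conformality to first order. For the converse, I would use that, for non-constant maps $S^2 \to S^2$, weak conformality and harmonicity cut out the same subset of $C^k(S^2,S^2)$, namely the $\pm$-holomorphic rational maps of the given degree. Consequently, the linearisations at $\phi_0$ of equation \eqref{WC} and of the harmonicity equation define the same subspace of $\Gamma(\phi_0^{-1}TS^2_0)$: any conformal $v^T$ is tangent to a genuine variation of $\phi_0$ through (say) holomorphic maps, which are harmonic, so $v^T$ is Jacobi. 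Finally, when $\phi = i\circ\phi_0$ with $\phi_0$ holomorphic, $\phi_0$ is a holomorphic map between compact K\"ahler manifolds, and Proposition \ref{prop:hol} directly yields that every Jacobi field along $\phi_0$ is holomorphic; by the splitting, the tangential part of any Jacobi field along $\phi$ is then automatically holomorphic.

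The step that I expect to require the most care is the converse in the second paragraph, i.e.\ ``conformal implies Jacobi'' for the tangential part, since it has to be argued infinitesimally without circular appeal to integrability. A clean route is to work in isothermal coordinates on domain and target: at the holomorphic $\phi_0$ one has $\pa\phi_0/\pa\bar z = 0$, and linearising the weak-conformality identity $\inn{\pa\phi_{0,t}/\pa z, \pa \phi_{0,t}/\pa z}^{\cc} = o(t)$ produces a $\bar\pa$-type condition on $v^T$ which, after using equation \eqref{harmonicity} for $\phi_0$, coincides with the tangential Jacobi equation for $v^T$.
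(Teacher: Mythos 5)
Your splitting argument in the first paragraph is correct and more explicit than what the paper gives: the paper simply observes that total geodesy of $S^2_0$ in $S^4$ makes the Jacobi operator split, whereas you verify directly that both $\Delta^\phi$ and the curvature term respect the decomposition $\tau \oplus \nu$, including the pleasant cancellation in $\Tr R^{S^4}(\dd\phi,v)\dd\phi$. The forward direction (Jacobi $\Rightarrow$ conformal) and the holomorphicity assertion via Proposition \ref{prop:hol} are also fine and essentially follow the paper.

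The gap is in your ``conformal $\Rightarrow$ Jacobi'' argument. Your main route there is a non sequitur: the fact (even granting it) that the weak-conformality equation and the harmonicity equation cut out the same subset of $C^k(S^2,S^2)$ does \emph{not} imply that their linearisations at $\phi_0$ have the same kernel. Two equations can define the same set while having different linearisations --- compare $x=0$ with $x^3=0$ in $\RR^2$ --- unless you additionally know that each equation presents the set as a smooth submanifold near $\phi_0$, which you do not establish (and is delicate precisely because $\phi_0$ has branch points). Your fallback route, linearising $\inn{\pa\phi_{0,t}/\pa z,\pa\phi_{0,t}/\pa z}^{\cc}=o(t)$, does correctly produce the first-order condition $\na^{\phi_0}_{\pa/\pa\bar z}(v^T)'=0$, i.e.\ holomorphicity of $(v^T)'$; but this cannot ``coincide with the tangential Jacobi equation'' as you claim, since the Jacobi equation is second order. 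What is still missing is the implication \emph{holomorphic $\Rightarrow$ Jacobi}. Proposition \ref{prop:hol} gives only the converse. The cleanest way to close the gap is exactly the step you wanted to avoid: by Proposition \ref{prop:int-hol}, a holomorphic vector field along a holomorphic map $S^2\to\CP^1$ is tangent to a one-parameter family of holomorphic (hence harmonic) maps, and is therefore Jacobi. There is nothing circular in invoking that result here --- it is a statement about holomorphic deformations of holomorphic maps, not about the integrability of Jacobi fields, which is the separate question the paper is addressing. (Alternatively one may appeal to the Weitzenb\"ock identity $\JJ_{\phi_0}=2\,\bar\pa^{\!*}\bar\pa$ valid for holomorphic maps between compact K\"ahler manifolds, whose kernel on $S^2$ is exactly the space of holomorphic sections.) With that step inserted, your proof is complete and somewhat more detailed than the paper's terse one-line argument, which simply cites Proposition \ref{prop:hol}.
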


\begin{proof}
That the image of $\phi$ is totally geodesic implies that the Jacobi equation \eqref{Jacobi} splits in tangential and normal parts.  Conformality and holomorphicity follow easily from Proposition \ref{prop:hol}.
\end{proof}

\begin{defn} A Jacobi field $v$ along a non-full harmonic map $\phi \in \Harm_d^{\nf}(S^2,S^4)$ is called \emph{extra} if it is not tangent to $\Harm_d^{\nf}(S^2,S^4)$.   
\end{defn}

Note that (i) \emph{$v$ is extra if and only if its normal part $v^{\perp}$ is extra}; hence, as for the holomorphic case in the previous section, it suffices to consider normal vector fields in the sequel;
 (ii) a normal vector field is tangent to $\Harm_d^{\nf}(S^2,S^4)$ (i.e., is \emph{not} extra) if and only if it is tangent to a rigid motion of $S^2$ in $S^4$.

 As in the case of holomorphic maps, we make the following definition.

\begin{defn} \label{def:collapse-ha}
Say that $\phi \in \Harm_d^{\nf}(S^2,S^4)$ is a \emph{collapse point} if it is in $\ov{\Harm_d^{\ff}(S^2,S^4)} \cap \Harm_d^{\nf}(S^2,S^4)$.
\end{defn}

To use the twistor construction, we must restrict to full harmonic maps \emph{of positive spin}; the next lemma shows that this is not a problem.

\begin{lem} \label{lem:jn-pts-ha}
For any $d \geq 3$, the sets \ $\ov{\Harm_d^{\ff}(S^2,S^4)} \cap \Harm_d^{\nf}(S^2,S^4)$,
\ $\ov{\Harm_d^+(S^2,S^4)} \cap \Harm_d^{\nf}(S^2,S^4)$ \ and
\ $\ov{\Harm_d^-(S^2,S^4)} \cap \Harm_d^{\nf}(S^2,S^4)$ are all equal.

Hence $\phi \in \Harm_d^{\nf}(S^2,S^4)$ is a collapse point if and only if its twistor lift $f \in \HHol_d^{\nf}(S^2,\CP^3)$ is a collapse point.
\end{lem}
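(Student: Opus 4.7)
The plan is to unpack the first assertion (equality of the three closures intersected with $\Harm_d^{\nf}(S^2,S^4)$) and then derive the second assertion from it via the twistor homeomorphism of Theorem \ref{th:smooth}. Since $\Harm_d^{\ff}(S^2,S^4)$ is the disjoint union of $\Harm_d^+(S^2,S^4)$ and $\Harm_d^-(S^2,S^4)$, one has $\ov{\Harm_d^{\ff}} = \ov{\Harm_d^+} \cup \ov{\Harm_d^-}$, so the first assertion reduces to showing
$$
\ov{\Harm_d^+(S^2,S^4)} \cap \Harm_d^{\nf}(S^2,S^4) = \ov{\Harm_d^-(S^2,S^4)} \cap \Harm_d^{\nf}(S^2,S^4).
$$

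The key step is to exhibit, for each non-full $\phi$, an orientation-reversing isometry $R:S^4 \to S^4$ fixing $\phi$ pointwise. The proposition preceding the lemma (together with the argument from \cite{Ca2}) places the image of $\phi$ in a totally geodesic $S^2 \subset S^4$; reflecting through any totally geodesic $S^3$ that contains this $S^2$ then provides such an $R$. I would then verify that $R$ reverses the spin: writing $\tilde R:\RR^5 \to \RR^5$ for the linear extension of $R$, one has $i \circ (R\circ\phi) = \tilde R\circ(i\circ\phi)$, so the defining $5$-form $\varPhi \wedge \varPhi_z \wedge \varPhi_{zz} \wedge \varPhi_{\bar z} \wedge \varPhi_{\bar z \bar z}$ gets multiplied by $\det\tilde R = -1$, hence $W(R\circ\phi) = -W(\phi)$.

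With this in hand, if $\phi \in \ov{\Harm_d^+} \cap \Harm_d^{\nf}$ and $\phi_n \to \phi$ with $\phi_n \in \Harm_d^+$, then $R\circ\phi_n \to R\circ\phi = \phi$, and each $R\circ\phi_n$ lies in $\Harm_d^-$: its spin is strictly negative by the calculation above, and its twistor degree remains $d$ since the energy (which equals $4\pi d$ by Proposition \ref{prop:twistor-bij}) is preserved by the isometry $R$. Hence $\phi \in \ov{\Harm_d^-} \cap \Harm_d^{\nf}$; the reverse inclusion is symmetric.

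For the second assertion, I would invoke the homeomorphism $\Pi_d^{\hor}:\HHol_d(S^2,\CP^3) \to \Harm_d^{\geq 0}(S^2,S^4)$ of Theorem \ref{th:smooth}, which restricts to homeomorphisms $\HHol_d^{\ff} \leftrightarrow \Harm_d^+$ and $\HHol_d^{\nf} \leftrightarrow \Harm_d^{\nf}$. Combining this with the first assertion, $\phi \in \ov{\Harm_d^{\ff}} \cap \Harm_d^{\nf}$ if and only if $\phi \in \ov{\Harm_d^+} \cap \Harm_d^{\nf}$, and a homeomorphism carries closures to closures, so this holds if and only if the twistor lift $f$ lies in $\ov{\HHol_d^{\ff}} \cap \HHol_d^{\nf}$, i.e., $f$ is a collapse point. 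The only mildly delicate point in all of this is the spin-reversal calculation, which depends on keeping the orientation conventions straight; the rest is a standard closure/homeomorphism argument.
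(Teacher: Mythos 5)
Your proof is correct and follows essentially the same route as the paper: the paper also reflects in a totally geodesic $S^3$ containing $S^2_0$ (specifically the involution $(x_1,\ldots,x_5)\mapsto(x_1,\ldots,x_4,-x_5)$), observes that this reverses spin, and transports the approximating sequence from $\Harm_d^+$ to $\Harm_d^-$. You fill in two details the paper treats as immediate --- the $\det\tilde R=-1$ computation for the spin reversal and the final reduction to $\HHol_d(S^2,\CP^3)$ via the homeomorphism of Theorem~\ref{th:smooth} --- but the argument is otherwise identical.
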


\begin{proof} Suppose that $\phi \in \ov{\Harm_d^+(S^2,S^4)} \cap \Harm_d^{\nf}(S^2,S^4)$; then $\phi$ is the limit of a sequence $\{\phi_n\}$ in $\Harm_d^+(S^2,S^4)$. Without loss of generality, assume that $\phi \in \Harm_d(S^2,S^2_0)$.  The involution $(x_1,x_2,x_3,x_4,x_5) \mapsto (x_1,x_2,x_3,x_4,-x_5)$ of $S^4$ fixes $S^2_0$.  Composing a harmonic map $S^2 \to S^4$ with this involution changes its spin, and so maps the sequence $\{\phi_n\}$ to a sequence in $\Harm_d^-(S^2,S^4)$, which also tends to $\phi$.
\end{proof}

We can thus find collapse points in $\Harm_d^{\nf}(S^2,S^4)$ for each $d \geq 3$, as follows.

\begin{exm} \label{exm:collapse-ha}
Applying the twistor projection \eqref{twistor-CP3} to Examples \ref{ex:d3} and \ref{ex:d-arb} gives:

(i) a family of full harmonic maps $\phi_t:S^2 \to S^4$ of degree $3$ which collapses to a non-full harmonic map $\phi_0:S^2 \to S^2_0$ of the same degree given by $\phi_0(z) = (z^3+1)/z$;  

(ii) for any $d \geq 3$, a family of full harmonic maps $\phi_t:S^2 \to S^4$ of degree $d$ which collapses to a non-full harmonic map $\phi_0:S^2 \to S^2_0$ of the same degree given by
$\phi_0(z) = \bigl\{z^k - \al \bigr\}\big/\bigl\{z^{\ell}\big(z^k - \al\frac{\ell+k}{\ell-k}\bigr)\!\bigr\}$.
\end{exm}

We try to translate the results in the last section into results on non-full harmonic maps using the map $\Pi_d$.  But there is a problem as the image of $\Pi_d$ is not the whole of $\Harm_d(S^2,S^4)$ but is only the subspace $\Harm_d^{\geq 0}(S^2,S^4)$ of harmonic maps  of non-negative spin.  Further, the proof that Jacobi fields along full maps can be lifted breaks down for non-full maps.  However we have

\begin{prop}
Let $\phi:S^2 \to S^4$ be a non-full harmonic map and let $f:S^2 \to \Si^+ = \CP^3$ be its twistor lift.

The mapping $\dd\Pi_f: u \mapsto v = \dd\pi \circ u$ defines an injective map from the space of infinitesimal horizontal holomorphic deformations $u$ along $f$ to the space of Jacobi fields $v$ along $\phi$; further $u$ is normal if and only if $v$ is.
\end{prop}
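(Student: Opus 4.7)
The plan is to verify, in turn, that $v = \dd\pi \circ u$ is a Jacobi field, that the assignment $u \mapsto v$ is injective, and that it preserves normality. Crucially, no surjectivity is claimed --- this is where the non-full case genuinely differs from the full case treated by Theorem \ref{th:Jacobi-lift}. Throughout, by Lemma \ref{lem:CP0^1} and the identification discussed just before the proposition, one may assume that $\phi$ has image in $S^2_0$ and $f$ has image in the horizontal $\CP^1_0$, with $\pi$ restricted to $\CP^1_0$ being the standard isometry onto $S^2_0$.

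The Jacobi-field property of $v$ follows from an argument essentially identical to part (a) of the proof of Theorem \ref{th:Jacobi-lift}, which nowhere uses fullness. Given a smooth variation $\{f_t\}$ of $f$ tangent to an IHHD $u$, Proposition \ref{prop:inf-hol} and Definition \ref{def:inf-HH} give $\tau(f_t) = o(t)$ and $(\dd f_t)^{\Vv} = o(t)$. The composition law for the tension field, together with the identity $\na\dd\pi(X,X) = 0$ for horizontal $X$, then yields $\tau(\phi_t) = o(t)$, so $v = \pa\phi_t/\pa t|_{t=0}$ is a Jacobi field along $\phi$ by Proposition \ref{prop:first-order}.

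For injectivity, I would use that $\phi$ is non-constant (otherwise the twistor lift is not defined), hence $f$ is non-vertical. An IHHD $u$ is, by Definition \ref{def:inf-HH}, a holomorphic vector field along $f$, so if $\dd\pi \circ u = 0$ then $u$ is a vertical holomorphic vector field along a non-vertical holomorphic map, and Lemma \ref{lem:hol-vf} forces $u \equiv 0$. The normality statement is then a pointwise linear-algebra check at each $p \in \CP^1_0$: since $\CP^1_0$ is horizontal, $T_p\CP^1_0 \subset \Hh_p$, and its orthogonal complement in $T_p\CP^3$ splits as $N_p\CP^1_0 = (N_p\CP^1_0 \cap \Hh_p) \oplus \Vv_p$. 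Under $\dd\pi_p$, the summand $T_p\CP^1_0$ maps isometrically onto $T_{\pi(p)}S^2_0$, the summand $N_p\CP^1_0 \cap \Hh_p$ maps isometrically onto the normal space $N_{\pi(p)}S^2_0$ in $T_{\pi(p)}S^4$, and $\Vv_p$ is annihilated. Writing $u = u^T + u^N_{\Hh} + u^N_{\Vv}$ accordingly, one has $v = \dd\pi(u^T) + \dd\pi(u^N_{\Hh})$ with the summands respectively tangent and normal to $S^2_0$; injectivity of $\dd\pi|_{\Hh}$ then gives that $v$ is normal to $S^2_0$ iff $u^T = 0$, i.e.\ iff $u$ is normal to $\CP^1_0$.

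No step is seriously obstructed: claim (i) is inherited from the fullness-free portion of Theorem \ref{th:Jacobi-lift}, claim (ii) is a one-line application of Lemma \ref{lem:hol-vf}, and claim (iii) is a direct consequence of the geometry of the twistor submersion restricted to a horizontal $\CP^1$. The conceptual content lies in recognising that the converse direction --- lifting a Jacobi field on $\phi$ to an IHHD on $f$ --- genuinely fails here, because the lifting argument of Theorem \ref{th:Jacobi-lift}(b) exploited a non-degeneracy property of $A''_{V'}$ that is unavailable for non-full maps. This failure is precisely what motivates the Schr\"odinger-equation approach to Jacobi fields used in the non-full analysis that follows.
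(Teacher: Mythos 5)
Your proposal is correct and follows essentially the same route as the paper's (very terse) proof: part (a) of Theorem \ref{th:Jacobi-lift} gives that $v$ is Jacobi, the vanishing of vertical holomorphic vector fields along non-vertical holomorphic maps gives injectivity (you invoke Lemma \ref{lem:hol-vf}, the paper cites Lemma \ref{lem:unique-vf}, but both rest on the same non-degeneracy of $A''_{V'}$), and the Riemannian-submersion geometry of $\pi$ restricted to a horizontal $\CP^1$ gives the normality statement. Your spelled-out linear-algebra decomposition of $T_p\CP^3$ over $\CP^1_0$ is a useful elaboration of what the paper dismisses as ``clear,'' and your closing remark correctly identifies why surjectivity cannot be expected here.
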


\begin{proof}
The proof of part (a) of Theorem \ref{th:Jacobi-lift} still applies in the non-full case; injectivity follows as in Lemma \ref{lem:unique-vf}, and preservation of normality is clear since $\pi$ is a Riemannian submersion.
\end{proof}

We cannot analyse Jacobi fields by lifting them as we did for the full case; however, we have a new tool, developed in \cite{MoRo,EjKo-extra}. 

Let $\phi \in \Harm^{\nf}_d(S^2,S^4)$.   As above,
without loss of generality we may assume that $\phi = \pi \circ f$ for some $f \in \Hol_d(S^2,\CP^1_0)$. 

Let $\{e_1,e_2,e_3,e_4,e_5\}$ be the standard basis for $\RR^5$ so that the normal space of $S^2_0$ in $S^4$ is spanned by $e_4$ and $e_5$.   The following is easy to deduce from \eqref{Jacobi}, or by differentiating \eqref{ha-sphere0}.

\begin{lem}
A normal vector field $v = v_1 e_4 + v_2 e_5$ along $\phi$ is Jacobi if and only if each component satisfies the \emph{generalized eigenvalue (Schr\"odinger) equation}{\rm :}
\begin{equation} \label{e-value}
\Delta v_i = |\dd\phi|^2 v_i \qquad (i=1,2).
\end{equation}
\qed \end{lem}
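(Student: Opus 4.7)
My plan is to compute the Jacobi operator $\JJ_\phi(v)$ directly for a purely normal vector field, exploiting that the totally geodesic inclusion $S^2_0 \hookrightarrow S^4$ makes the ambient frame $\{e_4,e_5\}$ a parallel trivialization of the normal bundle along $\phi$.  The case of a general $v$ then reduces to its normal part by the preceding lemma, which decouples tangential and normal components.

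First I would show that $e_4$ and $e_5$ are parallel sections of $\phi^{-1}TS^4$.  At each $x$, since $\varPhi(x)\in S^2_0$ has vanishing $4$th and $5$th coordinate, we have $\inn{\varPhi(x),e_j}=0$ for $j=4,5$, so $e_j\in T_{\phi(x)}S^4$; extending $e_j$ as a constant vector field on $\RR^5$ its Euclidean derivative vanishes, and the projection onto $TS^4$ contributes a term proportional to $\varPhi_j$, which is identically zero along $\phi$.  Hence $\na^\phi_X e_j=0$, and consequently, for $v=v_1e_4+v_2e_5$,
\[
\Delta^\phi v \;=\; (\Delta v_1)\,e_4 + (\Delta v_2)\,e_5,
\]
where $\Delta$ is the Laplacian on functions on $S^2$ in the convention used in \eqref{ha-sphere0}.

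Next I would handle the curvature term.  The unit sphere $S^4$ has $R^{S^4}(X,Y)Z = \inn{Z,X}Y - \inn{Z,Y}X$, so for an orthonormal frame $\{e_i\}$ of $T_xS^2$,
\[
R^{S^4}(\dd\phi(e_i),v)\,\dd\phi(e_i) \;=\; |\dd\phi(e_i)|^2 v - \inn{\dd\phi(e_i),v}\,\dd\phi(e_i).
\]
Because $v$ is ambient normal to $TS^2_0$ while $\dd\phi(e_i)\in TS^2_0$, the inner products $\inn{\dd\phi(e_i),v}$ vanish; summing over $i$ yields $\Tr R^{S^4}(\dd\phi,v)\dd\phi = |\dd\phi|^2 v$.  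The Jacobi equation $\JJ_\phi(v) = \Delta^\phi v - \Tr R^{S^4}(\dd\phi,v)\dd\phi = 0$ therefore reduces to
\[
(\Delta v_1 - |\dd\phi|^2 v_1)\,e_4 + (\Delta v_2 - |\dd\phi|^2 v_2)\,e_5=0,
\]
which holds if and only if $\Delta v_i = |\dd\phi|^2 v_i$ for $i=1,2$.

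A cleaner, more variational route---suggested by the paper---is to differentiate \eqref{ha-sphere0} directly.  Since $\inn{\varPhi,v}=0$, the normalization $\varPhi_t=(\varPhi+tv)/|\varPhi+tv|$ is well defined with $|\varPhi+tv|=1+O(t^2)$, so $\varPhi_t = \varPhi+tv+O(t^2)$.  By Proposition \ref{prop:first-order}, $v$ is a Jacobi field iff $\Delta\varPhi_t = |\dd\phi_t|^2\varPhi_t + o(t)$; linearising at $t=0$ and taking the $e_4$- and $e_5$-components kills the inhomogeneous piece $(\pa_t|\dd\phi_t|^2)\vert_{t=0}\,\varPhi_j$ because $\varPhi_4\equiv\varPhi_5\equiv 0$ along $\phi$, leaving the Schr\"odinger equation; the $e_1,e_2,e_3$ components are satisfied automatically, as the normality of $v$ forces $\pa_t|\dd\phi_t|^2|_{t=0}=0$.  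The main (minor) obstacle is keeping the sign conventions consistent---of $\Delta^\phi$, of $R^{S^4}$, and of the ambient Laplacian in \eqref{ha-sphere0}---so that the $|\dd\phi|^2 v_i$ term appears with a $+$ sign; once that is aligned, both approaches are short and elementary.
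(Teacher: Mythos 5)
Your argument is correct and implements exactly both routes the paper indicates in its one-line proof (``easy to deduce from \eqref{Jacobi}, or by differentiating \eqref{ha-sphere0}''): the direct computation via the Jacobi operator, using that $e_4,e_5$ are $\na^\phi$-parallel and that normality of $v$ kills the cross-terms in the curvature trace so that $\Tr R^{S^4}(\dd\phi,v)\dd\phi=|\dd\phi|^2v$, and the variational route via linearising \eqref{ha-sphere0}. The only cosmetic issues are that the reduction to the normal part is unnecessary (the lemma already assumes $v$ purely normal), and the parenthetical about ``a term proportional to $\varPhi_j$'' in the parallelity argument is superfluous, since $D_Xe_j=0$ already makes the $TS^4$-projection vanish.
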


Write $\varPhi = i \circ \phi = (\varPhi_1,\varPhi_2,\varPhi_3)$ where $i:S^2 \to \RR^3$ is the standard inclusion.  Equation \eqref{ha-sphere} implies that each $\varPhi_i$ is a solution to \eqref{e-value}; hence any linear combination of the $\varPhi_i$ is also a solution, giving a three-dimensional space of `trivial' eigenfunctions.   We call any other solution an \emph{extra eigenfunction}.   The following is clear.

\begin{lem} \label{lem:extra}
A normal Jacobi field $v = v_1 e_4 + v_2 e_5$ is an extra Jacobi field if and only if only if at least one of the $v_i$ is an extra eigenfunction.
\qed \end{lem}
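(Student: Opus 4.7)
The plan is to match the normal Jacobi fields tangent to $\Harm_d^{\nf}(S^2,S^4)$ with the space of pairs of trivial eigenfunctions, then deduce the lemma by complementation. The key input, noted just before Definition \ref{def:collapse-ha}, is that a normal Jacobi field is tangent to $\Harm_d^{\nf}(S^2,S^4)$ if and only if it is tangent to a rigid motion of $S^2_0$ inside $S^4$. Granted this, everything reduces to a direct computation.

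First I would compute the normal components of the Killing fields $X\varPhi$ for $X \in \mathfrak{so}(5)$. Since $\varPhi$ takes values in $S^2_0$, we have $\varPhi_4 = \varPhi_5 = 0$, so writing $X = (X_{ij})$ with $X_{ji} = -X_{ij}$ gives
\begin{equation*}
(X\varPhi)_4 = \sum_{j=1}^{3} X_{4j}\,\varPhi_j, \qquad
(X\varPhi)_5 = \sum_{j=1}^{3} X_{5j}\,\varPhi_j .
\end{equation*}
The six parameters $X_{4j}, X_{5j}$ vary independently; the remaining components of $X$ (rotations within $\spn(e_1,e_2,e_3)$ and the $(e_4,e_5)$-rotation) contribute nothing to the normal part. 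Since $\phi$ is non-constant of degree $d\geq 1$, the functions $\varPhi_1,\varPhi_2,\varPhi_3$ are linearly independent, so the map $X \mapsto \bigl((X\varPhi)_4, (X\varPhi)_5\bigr)$ sweeps out precisely the six-dimensional space of pairs $(v_1, v_2)$ with each $v_i \in \spn(\varPhi_1, \varPhi_2, \varPhi_3)$, i.e., each $v_i$ a \emph{trivial} eigenfunction.

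Combining this with the observation recalled above, the normal Jacobi fields tangent to $\Harm_d^{\nf}(S^2,S^4)$ are exactly the $v = v_1 e_4 + v_2 e_5$ for which \emph{both} $v_1$ and $v_2$ are trivial eigenfunctions. Negating this statement yields the lemma: $v$ is extra if and only if at least one $v_i$ fails to be trivial, i.e., is an extra eigenfunction.

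The main step requiring justification is the cited equivalence between tangency to $\Harm_d^{\nf}$ and rigid motion of $S^2_0$. This relies on the fact that every element of $\Harm_d^{\nf}(S^2,S^4)$ takes values in some totally geodesic $S^2 \subset S^4$, and the set of such $S^2$'s is the single $\SO(5)$-orbit $\SO(5)/(\SO(3)\times\SO(2))$; hence any one-parameter deformation $\{\phi_t\}$ of $\phi$ within $\Harm_d^{\nf}$ may be factored, for $|t|$ small, as $\phi_t = R_t \circ \psi_t$ with $R_t \in \SO(5)$, $R_0 = \Id$, and $\psi_t:S^2 \to S^2_0$ harmonic. The variation vector field then decomposes as $X\varPhi + \dd i \circ (\pa\psi_t/\pa t\vert_{t=0})$, in which the second term is tangent to $S^2_0$, hence contributes nothing to the normal component. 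This factorisation is the only non-trivial point; once it is in hand, the explicit coordinate computation above completes the argument.
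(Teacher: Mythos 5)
Your proof is correct, and it is essentially the argument the paper relies on implicitly: the paper asserts Lemma~\ref{lem:extra} as ``clear'' (with no proof given), presumably because it follows directly from the preceding remark~(ii) --- that a normal vector field is tangent to $\Harm_d^{\nf}(S^2,S^4)$ if and only if it is tangent to a rigid motion --- together with exactly the coordinate computation you spell out. You also supply the justification for remark~(ii) via the factorisation $\phi_t = R_t \circ \psi_t$, which the paper leaves unstated, so your write-up is more detailed than, but in the same spirit as, the paper's (non-)proof.
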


The next result uses an interesting correspondence between solutions to \eqref{e-value} and minimal branched immersions, see \cite{MoRo,EjKo-extra,Ej-min, Ej-bound,Ko}.

\begin{lem} \label{lem:Kotani}
Let $\phi:S^2 \to S^2_0$ be a holomorphic map and let $v_1$ be  an extra eigenfunction.  Then there is another extra eigenfunction $v_2$ such that $v_1 e_4 + v_2 e_5$ is an integrable extra Jacobi field.
\end{lem}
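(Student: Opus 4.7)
The strategy is to produce an explicit one-parameter horizontal holomorphic deformation of the twistor lift of $\phi$ and project it to $S^4$. Lift $\phi$ to $f:S^2 \to \CP^1_0 \subset \CP^3$ and represent $f$ by coprime polynomials $F = (F_0, 0, F_2, 0)$ with $F_0, F_2$ of degree at most $d$.

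First, I would observe that, for any pair $(\U_1, \U_3) \in \CC[z]_d^2$ satisfying the linear horizontality constraint $\{F_0, \U_1\} + \{F_2, \U_3\} = 0$, the family $f_t = [F_0, t\U_1, F_2, t\U_3]$ is horizontal for \emph{all} $t$, since \eqref{Q-NF} is linear in the coefficients and vanishes on both $F$ and $(0,\U_1,0,\U_3)$. Hence $\phi_t := \pi \circ f_t$ is a smooth one-parameter family of harmonic maps $S^2 \to S^4$, and so its variation vector $v = \pa\phi_t/\pa t|_{t=0}$ is, by construction, an integrable Jacobi field along $\phi$. A direct computation in the Calabi--Penrose model, applied to the quaternionic lift $(F_0 + t\U_1\jj,\, F_2 + t\U_3\jj) \in \HH^2$, shows that $v$ is normal to $S^2_0$ with $\{e_4, e_5\}$-component
$$
v_1 + \ii v_2 \;=\; \frac{2(\ov{F_0}\,\U_3 - \ov{F_2}\,\U_1)}{|F_0|^2 + |F_2|^2} \,.
$$

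The heart of the argument is to invert this map: given the extra eigenfunction $v_1$, produce a horizontal pair $(\U_1,\U_3)$ whose image above has real part $v_1/2$, and then define $v_2$ to be the corresponding imaginary part. This is precisely the content of the Ejiri--Kotani correspondence \cite{EjKo-extra,Ej-min,Ko}: from any solution of the Schr\"odinger equation $\Delta v_1 = |\dd\phi|^2 v_1$ on $S^2$, a Weierstrass-type integration formula produces polynomial coefficients $(\U_1, \U_3)$ satisfying the horizontality constraint, with the Schr\"odinger equation itself being the compatibility condition that makes the resulting primitive single-valued on $S^2$ and polynomial of degree $\leq d$. Granting this, $v_1 e_4 + v_2 e_5$ is an integrable Jacobi field along $\phi$ by the first step, and it is automatically \emph{extra}: the space of trivial normal Jacobi fields consists of pairs $\bigl(\sum a_i \varPhi_i, \sum b_i \varPhi_i\bigr)$ coming from infinitesimal rotations of $S^2_0$ in $S^4$, so its $e_4$-projection is only the three-dimensional span of the $\varPhi_i$, which does not contain the extra $v_1$.

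The main obstacle is executing this inversion step---constructing, and validating the polynomial nature of, the lift $(\U_1, \U_3)$ from an arbitrary extra eigenfunction $v_1$. This is the deep input borrowed from the cited literature, and amounts to the statement that the Schr\"odinger equation on $S^2$ admits a holomorphic representation whose single-valuedness is forced precisely by the eigenvalue condition satisfied by $v_1$.
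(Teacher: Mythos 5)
Your proposal is correct, but it organizes the argument in a genuinely different way from the paper.  The paper works downstairs in the Montiel--Ros picture: from the eigenfunction $v_1$ it builds the branched minimal surface $X = v_1\varPhi + \grad v_1$ in $\RR^3$, verifies that $\pa X/\pa z$ has poles of order at least $2$ (hence no residues) at $C_\phi$ so that the conjugate minimal surface $Y$ is single-valued, sets $v_2 = \inn{Y,\varPhi}$, checks that $v_2$ extends smoothly across $C_\phi$ and is an eigenfunction, and only then cites \cite{Ej-bound,EjKo-extra,Ko} for the actual deformation tangent to $v_1 e_4 + v_2 e_5$.  Your proof instead works entirely upstairs in the twistor picture: you exploit that horizontality at $f = [F_0,0,F_2,0]$ is \emph{linear} in the perturbation $(\U_1,\U_3)$, so that $[F_0, t\U_1, F_2, t\U_3]$ is horizontal holomorphic for \emph{all} $t$, making integrability manifest, and $v_2$ then falls out as the $e_5$-component of the projected variation with no separate verification needed.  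This is a cleaner structure, and your argument that the field is automatically \emph{extra} (the $e_4$-projection of the trivial normal fields lies in $\spn_\RR\{\varPhi_i\}$, which misses the extra $v_1$) is valid; the formula for $v_1 + \ii v_2$ is correct up to the scale factor of the identification \eqref{CP1S2}.  The trade-off is that the entire analytic difficulty is pushed into your inversion step --- producing from $v_1$ a horizontal polynomial pair $(\U_1,\U_3)$ of degree $\leq d$ --- which you correctly flag as the deep input from \cite{MoRo,EjKo-extra,Ej-min,Ko}; but the paper relies on precisely the same literature for its final step, so neither route is more self-contained.  In effect the paper constructs $v_2$ explicitly and then invokes the deformation, while you invoke the deformation (through the Wronskian inversion) and get $v_2$ for free; these are the same underlying facts assembled in the opposite order.
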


\begin{proof} As in \cite{MoRo,EjKo-extra}, there is a non-constant weakly conformal harmonic map (i.e., minimal branched immersion) $X:S^2 \setminus C_{\phi} \to \RR^3$, unique up to an additive constant,  with (i) $\pa X/\pa z \in$ the subbundle spanned by $\pa\phi/\pa{\ov{z}}$, (ii) $\inn{X,\varPhi} = v_1$ and (iii) $X \to \infty$ as we approach $C_{\phi}$\,.  In fact, $X = v_1 \varPhi + \grad v_1$ is such a map.   The components $\pa X/\pa z$ have poles at points of $C_{\phi}$ of order at least $2$.   Let $Y$ be the harmonic conjugate of $X$.  Since $\pa X/\pa z$ has no residues, $Y$ is well defined on $S^2 \setminus C_{\phi}$.  Set $v_2 = \inn{Y,\varPhi}$.  Then $v_2$ extends smoothly to the whole of $S^2$ and is another extra eigenfunction.  As in \cite{Ej-bound,EjKo-extra,Ko}, we may construct a deformation of $\phi$ tangent to $v_1 e_4 + v_2 e_5$.
\end{proof}

\begin{rem} \label{rem:Kotani}  Thinking of the space of extra eigenfunctions as the quotient of the space of all eigenfunctions by the three-dimensional space of trivial ones, it follows from the lemma that the dimension of the space of extra eigenfunctions is even.  M.\ Kotani \cite{Ko} actually shows that, if $\phi$ has $r$ pairs of extra eigenfunctions, then there is a deformation $\phi_t$ of $\phi$ with $\phi_t:S^2 \to S^{2+2r}$ full harmonic for small non-zero $|t|$.
Now \cite{Ca1} full harmonic maps from $S^2$ to $S^{2+2r}$ exist if and only if the twistor degree $d \geq (r+1)(r+2)/2$; on solving this we obtain the bound $r \leq \frac{1}{2}(\sqrt{8d+1}-3)$.  See \cite{GuOh} for more information on deformations,  and  \cite{Fer-Sn} for a description of the space of harmonic maps from $S^2$ to $S^m$.
\end{rem}

\begin{thm} \label{th:jn-S4}
Let $\phi:S^2 \to S^4$ be a non-full harmonic map.  Denote its twistor degree by $d$.   Then the following are equivalent.

{\rm (i)} $\phi$ is a non-smooth point of\/ $\Harm_d(S^2,S^4)$;

{\rm (ii)} $\phi$ is a collapse point;

{\rm (iii)} $\phi$ has an extra Jacobi field;

{\rm (iv)} $\phi$ has an extra Jacobi field which is integrable;

{\rm (v)} $\phi$ has an extra Jacobi field which is not integrable.
\end{thm}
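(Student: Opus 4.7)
The plan is to establish the cycle $(\mathrm{i})\Leftrightarrow(\mathrm{ii})\Rightarrow(\mathrm{iv})\Rightarrow(\mathrm{v})\Rightarrow(\mathrm{iii})\Rightarrow(\mathrm{iv})\Rightarrow(\mathrm{ii})$, using the twistor diffeomorphism (Theorem \ref{th:smooth}) together with Proposition \ref{prop:jn-pt} for the implications involving collapse, Lemma \ref{lem:Kotani} to promote an extra eigenfunction to an integrable extra Jacobi field, and Proposition \ref{prop:AdamsSimon} to extract a non-integrable Jacobi field from the failure of local smoothness.

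For $(\mathrm{i})\Leftrightarrow(\mathrm{ii})$, Proposition \ref{prop:space-nonfull} and Theorem \ref{th:smooth} exhibit $\Harm_d^{\nf}(S^2,S^4)$ as a smooth real-analytic submanifold of $R_d$ of dimension $2d+4$. If $\phi$ is not a collapse point, a neighbourhood of $\phi$ in $\Harm_d(S^2,S^4)$ coincides with its intersection with $\Harm_d^{\nf}$, so $\phi$ is smooth. If $\phi$ is a collapse point, every neighbourhood of $\phi$ also contains points of $\Harm_d^{\ff}$, which is an algebraic subvariety of the same pure dimension $2d+4$ (Remark \ref{rem:amalg}); two distinct equi-dimensional analytic branches cannot fit together into a single smooth manifold at $\phi$.

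For $(\mathrm{ii})\Rightarrow(\mathrm{iv})$, Lemma \ref{lem:jn-pts-ha} transports the collapse property to the twistor lift $f\in\HHol_d^{\nf}(S^2,\CP^3)$, and Proposition \ref{prop:jn-pt}(v) furnishes an extra IHHD $u$ integrable by a smooth family $\{f_t\}\subset\HHol_d^{\ff}$; the projections $\phi_t=\pi\circ f_t$ are full harmonic maps whose velocity $\dd\pi\circ u$ is a Jacobi field, extra because it points from $\Harm_d^{\nf}$ into $\Harm_d^{\ff}$, and integrable by construction. For $(\mathrm{iii})\Rightarrow(\mathrm{iv})$, I would decompose an extra Jacobi field as $v=v^T+v^{\perp}$; the tangential part is a Jacobi field along the restricted map $S^2\to S^2_0$ and so is integrable within $S^2_0$ by \cite{GuWh}, hence tangent to $\Harm_d^{\nf}$, so $v^{\perp}=v_1e_4+v_2e_5$ is itself extra. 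Lemma \ref{lem:extra} then yields an extra eigenfunction $v_i$, and Lemma \ref{lem:Kotani} upgrades it to an integrable extra Jacobi field. The implications $(\mathrm{iv})\Rightarrow(\mathrm{iii})$ and $(\mathrm{v})\Rightarrow(\mathrm{iii})$ are tautological.

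The crux is $(\mathrm{iv})\Rightarrow(\mathrm{v})$. First I would establish $(\mathrm{iv})\Rightarrow(\mathrm{ii})$ directly: integrating an extra Jacobi field produces a one-parameter family of harmonic maps which, having a velocity not tangent to $\Harm_d^{\nf}$, immediately exits that smooth submanifold and therefore enters $\Harm_d^{\ff}$, exhibiting $\phi$ as a collapse point. Now, as a collapse point, the local space of harmonic maps near $\phi$ contains two distinct equi-dimensional analytic branches meeting at $\phi$ and so cannot be a smooth manifold; Proposition \ref{prop:AdamsSimon} then forces some Jacobi field along $\phi$ to be non-integrable, and this field must be extra, since any non-extra Jacobi field lies in $T_\phi\Harm_d^{\nf}$ and is integrable by a curve in the smooth submanifold $\Harm_d^{\nf}$. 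The main obstacle is the non-smoothness step: one must verify carefully that at a collapse point the local moduli space truly fails to be a smooth manifold, and this ultimately reduces to the identification of the harmonic and holomorphic moduli spaces via Theorem \ref{th:smooth} together with the fact that $\HHol_d^{\nf}(S^2,\CP^3)$ and $\ov{\HHol_d^{\ff}(S^2,\CP^3)}$ are distinct irreducible components of $\HHol_d(S^2,\CP^3)$ (Remark \ref{rem:amalg}).
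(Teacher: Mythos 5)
Your proposal is correct and relies on the same toolkit as the paper's own argument: the twistor diffeomorphism (Theorem \ref{th:smooth}), the holomorphic collapse/IHHD dichotomy (Proposition \ref{prop:jn-pt}, transported by Lemma \ref{lem:jn-pts-ha}), the eigenfunction correspondence (Lemmas \ref{lem:extra} and \ref{lem:Kotani}) to manufacture integrable extra Jacobi fields, and Adams--Simon (Proposition \ref{prop:AdamsSimon}) against Verdier's pure-dimension theorem to manufacture non-integrable ones. The organizational differences are minor: the paper runs (ii)$\Rightarrow$(iii)$\Rightarrow$(iv)$\Rightarrow$(ii) and then (ii)$\Rightarrow$(v) as a separate spur, whereas you absorb (v) into a cycle; and where the paper's (ii)$\Rightarrow$(v) is a quantitative dimension count (space of Jacobi fields strictly exceeds the $(2d+4)$-dimensional tangent space of $\Harm_d^{\nf}$, so Adams--Simon would force a manifold of dimension $>2d+4$, contradicting pure dimension $2d+4$), your version argues qualitatively that a point lying on two distinct equi-dimensional irreducible components cannot be a smooth point, so Adams--Simon directly yields a non-integrable Jacobi field, which you then correctly show must be extra because non-extra fields integrate inside the smooth submanifold $\Harm_d^{\nf}$. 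Both routes are valid; the paper's avoids appealing to the local-irreducibility characterization of smooth points but needs the explicit dimension, while yours is slightly more geometric and self-correcting in the last step (ruling out the non-extra fields). Your extra detail on (iii)$\Rightarrow$(iv) --- peeling off the tangential Jacobi field as integrable inside $S^2_0$ via \cite{GuWh} before applying Lemma \ref{lem:extra} --- is a sound expansion of the remark the paper makes just before its statement of that lemma.
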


\begin{proof}
(i) $\Longleftrightarrow$ (ii).  This follows from the fact that $\Harm_d^{\nf}(S^2,S^4)$ is a manifold, cf.\ Proposition \ref{prop:jn-pt}.

(ii) $\implies$ (iii).  Let $\phi \in \Harm_d^{\nf}(S^2,S^4)$.  Then $\phi = \pi \circ f$ where  $f \in \HHol_d^{\nf}(S^2,\CP^3)$.   By Lemma \ref{lem:jn-pts-ha}, $f$ is a collapse point, so by Proposition \ref{prop:jn-pt}, there is an extra IHHD.  This descends to an extra Jacobi field.

(iii) $\implies$ (iv). Given an extra Jacobi field, Lemma \ref{lem:Kotani} provides an extra integrable Jacobi field.

(iv) $\implies$ (ii).  Let $v$ be an extra integrable Jacobi field.  Choose a one-parameter family $\{\phi_t\}$ of harmonic maps tangent to $v$.  As in the proof of Proposition \ref{prop:jn-pt}, we can show that $\phi_t$ is full for $|t|$ non-zero and small enough, so that $\phi$ is a collapse point.

(ii) $\implies$ (v).   The space of Jacobi fields tangent to $\Harm_d^{\nf}(S^2,S^4)$ has dimension $2d+4$.  Since there is an extra Jacobi field, the space of all Jacobi fields along $\phi$ is strictly bigger than this.   If they were all integrable, then by the result of D.\ Adams and L.\ Simon (Proposition \ref{prop:AdamsSimon}), close to $f$ the space $\Harm_d(S^2,S^4)$ would be a manifold of dimension greater than $2d+4$, contradicting Verdier's result that it is a complex algebraic variety of pure dimension $2d+4$.
\end{proof}

As in the holomorphic case (Corollary \ref{cor:smooth-reg}), it follows that \emph{a non-full horizontal harmonic map $f:S^2 \to S^4$ is a smooth point of\/ $\Harm_d(S^2,S^4)$ if and only if it is a regular point} (Definition \ref{def:regular-ha}).

Another consequence of Theorem \ref{th:jn-S4} is

\begin{cor}
Let $n = 3$ or $4$.  Then, for each $d \geq 3$, there are non-full harmonic maps from $S^2$ to $S^n$ of degree $d$ which admit non-integrable Jacobi fields.
\end{cor}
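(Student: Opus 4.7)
The plan is to dispatch each case by combining results already established in the paper. For $n=4$, the conclusion is immediate: Example \ref{exm:collapse-ha}(ii) provides, for every $d\ge 3$, a non-full harmonic map $\phi_0\in\Harm_d^{\nf}(S^2,S^4)$ that is a collapse point, and the implication (ii)$\Rightarrow$(v) of Theorem \ref{th:jn-S4} then produces a non-integrable Jacobi field along $\phi_0$.

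For $n=3$, I would first recall the classical fact (due to Calabi, see \cite{Ca2}) that every harmonic map $S^2\to S^3$ factors through a totally geodesic $S^2\subset S^3$. Hence the map $\phi_0$ of the preceding paragraph, whose image already lies in $S^2_0$, may equally be regarded as a non-full harmonic map $\phi_0:S^2\to S^3$ of the same twistor degree $d$. Because $\phi_0$ is a collapse point for the $S^4$ theory, Theorem \ref{th:jn-S4} together with Lemma \ref{lem:extra} supplies an extra eigenfunction $v_1$ of the Schr\"odinger equation $\Delta v = |\dd\phi_0|^2\, v$ on $S^2$, that is, a solution linearly independent of the components of $\varPhi=i\circ\phi_0$. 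Letting $e_4$ denote the unit normal of $S^2_0$ in $S^3$, the same computation that produced \eqref{e-value} shows that a normal vector field $w\, e_4$ along $\phi_0:S^2\to S^3$ is Jacobi if and only if $\Delta w = |\dd\phi_0|^2\, w$. Consequently $v_1 e_4$ is a normal Jacobi field along $\phi_0$ regarded as a map into $S^3$.

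The key step is to show that $v_1 e_4$ is not integrable. For this I would use the smoothness of $\Harm_d(S^2,S^3)$: since every harmonic map $S^2\to S^3$ of twistor degree $d$ is the composition of a holomorphic (or antiholomorphic) degree-$d$ map $S^2\to S^2$ with the inclusion of a totally geodesic $2$-sphere, $\Harm_d(S^2,S^3)$ fibres smoothly over the $3$-manifold of oriented great $2$-spheres in $S^3$, with fibre biholomorphic to $\Hol_d(S^2,S^2)$. At the point $\phi_0$, its tangent space splits into a tangential part, corresponding to infinitesimal holomorphic deformations of $\phi_0$ inside $S^2_0$, together with a normal part spanned by the infinitesimal rigid motions of $S^2_0$ inside $S^3$; a direct computation using \eqref{ha-sphere0} identifies the latter with the three-parameter family $\{\inn{X,\varPhi}\,e_4 : X\in\RR^3\}$, which coincides precisely with the three-dimensional space of trivial eigenfunctions. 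Since $v_1$ is extra, $v_1 e_4$ lies outside this tangent space, so it cannot be tangent to any smooth curve in the manifold $\Harm_d(S^2,S^3)$, and hence is not integrable. The main obstacle is the explicit verification of the smooth manifold structure on $\Harm_d(S^2,S^3)$ and the matching of its normal tangent directions with the rigid-motion family; once these are in place, non-integrability of $v_1 e_4$ follows at once.
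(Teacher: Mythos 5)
Your argument is correct in its first two paragraphs and follows the paper exactly: use Example \ref{exm:collapse-ha} to produce a collapse point, invoke Theorem \ref{th:jn-S4} and Lemma \ref{lem:extra} to extract an extra eigenfunction $v_1$, and observe that $v_1 e_4$ is a normal Jacobi field along $\phi_0$ regarded as a map into $S^3$. The divergence is in the last step, where the two arguments establish non-integrability of $v_1 e_4$ by genuinely different routes.

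You propose to verify directly that $\Harm_d(S^2,S^3)$ is a smooth manifold, describe its tangent space at $\phi_0$ as tangential holomorphic deformations plus a $3$-parameter family of rigid-motion normal fields $\{\inn{X,\varPhi}\,e_4 : X\in\RR^3\}$, and then note that $v_1 e_4$, being given by an \emph{extra} eigenfunction, lies outside this tangent space. This is geometrically plausible (the fibration of $\Harm_d(S^2,S^3)$ over the $3$-manifold of oriented great $2$-spheres with fibre $\Hol_d(S^2,S^2)$ does describe the space correctly), but it leaves real work undone --- you acknowledge yourself that establishing the smooth manifold structure and the identification of the normal tangent directions is the main obstacle. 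The paper sidesteps all of this with a much shorter observation: since every harmonic map $S^2\to S^3$ factors through a totally geodesic $S^2$, one has the inclusion $\Harm_d(S^2,S^3)\subset\Harm_d^{\nf}(S^2,S^4)$, and the latter is already established (Theorem \ref{th:smooth}) to be a manifold. If $v_1 e_4$ were integrable by a family $\{\phi_t\}\subset\Harm_d(S^2,S^3)$, that family would lie in $\Harm_d^{\nf}(S^2,S^4)$, making $v_1 e_4$ tangent to that manifold; but $v_1 e_4$ is extra precisely in the sense of \emph{not} being tangent to $\Harm_d^{\nf}(S^2,S^4)$ --- contradiction. This avoids any new analysis of $\Harm_d(S^2,S^3)$ and uses the machinery already built. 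Your route is not wrong, but to be complete it needs the manifold structure of $\Harm_d(S^2,S^3)$ and the tangent-space computation to be carried out in full, which the paper's inclusion argument renders unnecessary.
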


\begin{proof}   The case $n=4$ follows from the last theorem and the fact that the set of collapse points is non-empty. 

For the case $n =3$, let $\phi:S^2 \to S^2_0 \subset S^4$ be a holomorphic map with an extra integrable Jacobi field $v = v_1 e_4 + v_2 e_5$.  By Lemma \ref{lem:extra}, one of the $v_i$ must be an extra eigenfunction, say $v_1$.   Now regard $\phi$ as a harmonic map into $S^3 = \{(x_1,x_2,x_3,x_4,x_5):x_5 = 0 \}$. Then $v_1 e_4$ is a Jacobi field along it which cannot be integrable, since $\Harm_d(S^2,S^3) \subset \Harm^{\nf}_d(S^2,S^4)$, and being extra, $v_1 e_4$ is not tangent to this space.

\end{proof}

\section{Area and nullity} \label{sec:area-nullity}
The \emph{nullity} of (the energy) of a harmonic map is the \emph{real} dimension of the space of Jacobi fields along it.  Combining Theorems \ref{th:full-int} and  \ref{th:jn-S4} we obtain a result valid in both the full and non-full cases.

\begin{thm} \label{th:all-int}
Let $\phi:S^2 \to S^4$ be a harmonic map of twistor degree $d$.  Then the nullity of\/ $\phi$ is greater than or equal to $4d+8$ with equality if and only if\/ $\phi$ is a regular point of the algebraic variety $\Harm_d(S^2,S^4)$.
\end{thm}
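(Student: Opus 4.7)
The plan is to handle the full and non-full cases separately, in each case reducing the nullity count to already-established dimension counts of infinitesimal deformations, and then observing that the bound $4d+8$ is uniform because it equals the real dimension of each component of the variety $\Harm_d(S^2,S^4)$.

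For the full case, suppose $\phi \in \Harm_d^{\ff}(S^2,S^4)$ (composing with the antipodal map if necessary to obtain positive spin), with twistor lift $f \in \HHol_d^{\ff}(S^2,\CP^3)$. I would invoke Theorem \ref{th:Jacobi-lift}, which gives an $\RR$-linear bijection $\dd\Pi_f$ between Jacobi fields along $\phi$ and IHHDs along $f$. The key observation is that the space of IHHDs carries a natural \emph{complex} structure: IHHDs are holomorphic by Proposition \ref{prop:hol}, and the defining horizontality equation \eqref{inf-hor} is $\CC$-linear in $\mathcal{U}$. Hence the nullity, being the real dimension of the Jacobi field space, equals twice the complex dimension of the IHHD space. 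Proposition \ref{prop:smooth} then yields complex dimension $\geq 2d+4$, with equality iff $f$ (equivalently $\phi$) is regular, so nullity $\geq 4d+8$ with the claimed equality condition.

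For the non-full case, Proposition \ref{prop:space-nonfull} together with Theorem \ref{th:smooth} realizes $\Harm_d^{\nf}(S^2,S^4)$ as a real-analytic submanifold of $R_d$ of real dimension $4d+8$. Its tangent space at $\phi$ embeds in the space of Jacobi fields as integrable ones (vectors tangent to variations through non-full harmonic maps), so nullity $\geq 4d+8$. Equality holds iff there are no extra Jacobi fields, and by Theorem \ref{th:jn-S4} this is equivalent to $\phi$ not being a collapse point, equivalently to $\phi$ being a smooth (hence regular) point of $\Harm_d(S^2,S^4)$.

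The main technical subtlety I anticipate is the real-versus-complex dimension bookkeeping in the full case: the bijection $\dd\Pi_f$ of Theorem \ref{th:Jacobi-lift} is only $\RR$-linear (since $\dd\pi$ is not complex linear — $S^4$ carries no global complex structure), so one must carefully pass from the complex IHHD count to a real Jacobi count, doubling the dimension. Once that is handled, both cases assemble uniformly, reflecting the fact that all irreducible components of $\Harm_d(S^2,S^4)$ have the same complex dimension $2d+4$.
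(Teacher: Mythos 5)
Your proof is correct and follows essentially the same path as the paper, which simply combines Theorem~\ref{th:full-int} (full case) and Theorem~\ref{th:jn-S4} (non-full case). Your explicit handling of the real-vs-complex dimension bookkeeping in the full case is a useful addition: the paper states in Theorem~\ref{th:full-int}(ii) that the Jacobi field space has dimension $2d+4$, but this must be read as the complex dimension inherited via $\dd\Pi_f$ from the $\CC$-vector space of IHHDs, since the nullity is defined as the \emph{real} dimension; your observation that the $\RR$-linear bijection of Theorem~\ref{th:Jacobi-lift} transports the complex structure and hence doubles the count to $4d+8$ is exactly the step the paper's terse proof leaves implicit.
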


Recalling the result of Bolton and Woodward \cite{BoWo-space} cited at the end of \S \ref{sec:main}, we deduce

\begin{cor} \label{cor:d345} The nullity of a full harmonic map $\phi:S^2 \to S^4$ of degree $\leq 5$ is exactly $4d+8$.
\end{cor}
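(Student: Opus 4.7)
The plan is to deduce this essentially as a direct corollary, so the proof is mostly a matter of assembling the pieces already in place rather than introducing new techniques. The main ingredients are Theorem \ref{th:all-int} (which bounds the nullity below by $4d+8$ and characterizes equality via regularity) and the Bolton--Woodward regularity result cited at the end of \S \ref{sec:main}.

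First I would recall that, since $\phi$ is full, Corollary \ref{cor:twistor-bij} forces $d \geq 3$. Consequently, the only cases to verify are $d \in \{3,4,5\}$. By the twistor correspondence (Theorem \ref{th:smooth}), composing with the antipodal map if necessary, we may assume $\phi$ has positive spin, so that $\phi = \pi \circ f$ for a unique $f \in \HHol_d^{\ff}(S^2,\CP^3)$.

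Next I would invoke the Bolton--Woodward theorem \cite{BoWo-space}, which asserts that for $d \in \{3,4,5\}$ every $f \in \HHol_d^{\ff}(S^2,\CP^3)$ is a regular point of $\HHol_d(S^2,\CP^3)$ in the sense of Definition \ref{def:regular-ho}. By Definition \ref{def:regular-ha}, this is precisely the condition that $\phi$ be a regular point of $\Harm_d(S^2,S^4)$.

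Finally, applying Theorem \ref{th:all-int} to this regular harmonic map yields exactly the equality case, so the nullity of $\phi$ equals $4d+8$, completing the proof. There is no genuine obstacle here: all the difficult content is absorbed into Theorem \ref{th:all-int} (which in turn rests on the Jacobi lifting Theorem \ref{th:Jacobi-lift} and on Verdier's pure-dimensionality result), together with the external Bolton--Woodward input that settles regularity in the low-degree range.
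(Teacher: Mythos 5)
Your proposal matches the paper's argument exactly: the authors deduce the corollary directly from Theorem \ref{th:all-int} together with the Bolton--Woodward regularity result cited at the end of \S \ref{sec:main}, which is precisely the chain you assemble. The only thing the paper leaves implicit that you spell out is the reduction to $d \in \{3,4,5\}$ and the positive-spin normalization, both of which are correct and routine.
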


As in \cite[\S 6]{LeWo2}, we can consider instead the second variation of the \emph{area}.  This is unaffacted by tangential conformal fields.  In fact, results of N. Ejiri and M. Micallef \cite{EjMi} imply that,
\emph{for any non-constant harmonic map from the $2$-sphere, the map $v \mapsto$ the normal component of\/ $v$
is a surjective linear map from the space of Jacobi fields for the energy to the space of Jacobi fields for the area, with kernel the tangential conformal fields}.

S.\ Montiel and F.\ Urbano \cite[Corollary 7]{MoUr} show that the nullity of the (second variation of the) area of a (full or non-full) minimal immersion of\/ $S^2$ in $S^4$ of twistor degree $d$ is exactly $4d+2$.   Now the tangential conformal fields form a space of (real) dimension $6$.  Hence the nullity of the energy is precisely $4d+8$; on using Theorem \ref{th:all-int} we deduce that \emph{any immersive harmonic map is a regular, and so a smooth, point of\/ $\Harm_d(S^2,S^4)$}.
 
\section{The set of collapse points} \label{sec:collapse}

\subsection{Generalities}

Fix $d \in \{1,2, \ldots\}$.  We study the set of maps in $\Harm^{\nf}_d(S^2,S^4)$ which are collapse points in the sense of Definition \ref{def:collapse-ha}.  It suffices to consider the set $C_d$ of collapse points in $\Harm_d(S^2,S^2_0)$ $\cong$ $\Harm_d(S^2,S^2)$.  Further, via the twistor construction, we may identify this with the set of collapse points in the $(2d+1)$-dimensional complex manifold $\Hol_d(S^2,\CP^1_0) \cong \Hol_d(S^2,\CP^1)$; indeed, as remarked in \S \ref{subsec:inf-ha}, since the twistor projection \eqref{twistor-CP3} maps $\CP^1_0$ onto $S^2_0$ isometrically, we may consider $f \in \Hol_d(S^2,\CP^1)$ and $\phi = \pi \circ f \in \Harm_d(S^2,S^2)$ to be the same map.

{}From Example \ref{exm:collapse-ha},  $C_d$ is non-empty if and only if $d \geq 3$, in which case it is a non-empty proper algebraic subvariety of $\Hol_d(S^2,\CP^1)$.   Further, by Theorem \ref{th:jn-S4},  $C_d$ is precisely the set of $f \in \Hol_d(S^2,\CP^1)$ which admit extra eigenfunctions.  This last subject was studied in \cite{Ej-min,Ej-bound,Ko,EjKo-extra} and some results on $C_d$ were obtained; we give some new results in \S\S \ref{subsec:invt}, \ref{subsec:branch}.

The space $\Hol_d(S^2,\CP^1) = \Hol_d(S^2,\CP^1_0)$ consists of projective classes of pairs $(F_0,F_2) = (F_0,0,F_2,0)$ of coprime polynomials of maximum degree exactly $d$.  Consider the map which gives the \emph{Wronskian} of $F_0$ and $F_2$:
\begin{equation} \label{Wtilde0}
\wt{W}:\CC[z]_d^2 \to  \CC[z]_{2d-2}\,, \quad  (F_0,F_2) \mapsto -\{F_0,F_2\} = F_0 F'_2 - F_2 F'_0 \,.
\end{equation}
Since this is bilinear and antisymmetric, it induces a linear map
\begin{equation} \label{Wtilde}
\wt{W}:  \wedge^2 \CC[z]_d \cong \wedge^2\CC^{d+1} \cong \CC^{d(d+1)/2} \to \CC[z]_{2d-2} \cong \CC^{2d-1}
\end{equation}
given on the set $\wt{S}_d$ of decomposable $2$-vectors by
\begin{equation} \label{Wtilde-S}
\wt{W}(F_0 \wedge F_2) = F_0 F'_2 - F_2 F'_0\,.
\end{equation}
Now $\wt{W}(F_0,F_2) \equiv 0$ if and only if $F_0$ and $F_2$ are linearly dependent.   Hence $\wt{W}$ projectivizes to a map
\begin{equation} \label{W-S}
W:  S_d  \to \CP^{2d-2}\,,
\end{equation}
where
$S_d = \bigl\{[v \wedge w] \in P(\wedge^2 \CC^{d+1}): \ v,w \in \CC^{d+1} \text{ linearly independent}\!\bigr\}$, called the \emph{Wronski map}.
Note that $S_d$ can be thought of as the quotient of $\Hol_d(S^2,\CP^1)$ by the action of $\SL(2,\CC)$ given by \emph{post}composing $f = [F_0,F_2] \in \Hol_d(S^2,\CP^1)$ with a fractional linear transformation of the codomain:
\begin{equation} \label{action-post}
\left(\begin{matrix}F_0 \\ F_2 \end{matrix}\right) \mapsto
	\left(\begin{matrix}a F_0+b F_2 \\ c F_0+d F_2 \end{matrix}\right)
	\quad \text{for} \quad
	\left(\begin{matrix} a & b \\ c & d \end{matrix}\right) \in \SL(2,\CC).
\end{equation}
This action leaves $[F_0 \wedge F_2]$ invariant.
Further, as is well known, $S_d$ is the complex quadric
$\bigl\{ [\omega] \in \CP^{d(d+1)-1} : \omega \in \wedge^2\CC^{d+1}, \omega \wedge \omega = 0 \bigr\}$; this is biholomorphic to the Grassmannian $G_2(\CC^{d+1})$  via $[F_0 \wedge F_2] \mapsto \spn\{F_0, F_2\}$, and so has dimension $2d-2$.   The degree of the Wronski map is given by the $d$'th \emph{Catalan number}  $\frac{1}{d}\binom{2d-2}{d-1}$ see, for example, \cite{Gol}.

\begin{lem} \cite{Ej-min} The map $f = [F_0,F_2] \in \Hol_d(S^2, \CP^1)$ is a collapse point if and only if\/ $[F_0 \wedge F_2]$ is a critical point of the Wronski map \eqref{W-S}; equivalently, if and only if the kernel of the linear map \eqref{Wtilde} contains some vectors tangent to $\wt{S}_d$ at $F_0 \wedge F_2$\,.
\end{lem}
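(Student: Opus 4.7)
The plan is to translate the IHHD equation \eqref{Q-NF} into the language of the Wronski map through a concrete linear change of variables. By Proposition \ref{prop:jn-pt}, $f=[F_0,F_2]$ is a collapse point if and only if the solution space of the IHHD equation $\dd Q_F(\U_1,\U_3)=0$ in $\CC[z]_d^{\,2}$ has dimension strictly greater than three, the three-dimensional ``trivial'' subspace spanned by $(F_0,0)$, $(0,F_2)$, $(F_2,F_0)$ being always present. A direct expansion, combined with \eqref{Wtilde0}, gives the identity
\[
 \dd Q_F(\U_1,\U_3) \;=\; -\,\wt{W}(F_0,\U_1)\,-\,\wt{W}(F_2,\U_3).
\]

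First, I will introduce the comparison map
\[
 \Psi:\CC[z]_d^{\,2} \longrightarrow \wedge^2 \CC^{d+1},\qquad (\U_1,\U_3)\longmapsto F_0\wedge \U_1\,-\,\U_3\wedge F_2.
\]
Its image is contained in the tangent space $T_{F_0\wedge F_2}\wt{S}_d$, and an elementary computation in a basis of $\CC^{d+1}\cong\CC[z]_d$ extending the linearly independent pair $\{F_0,F_2\}$ shows that $\ker\Psi$ is exactly the three-dimensional subspace spanned by the trivial IHHDs $(F_0,0)$, $(0,F_2)$, $(F_2,F_0)$. Since both $\CC[z]_d^{\,2}/\ker\Psi$ and $T_{F_0\wedge F_2}\wt{S}_d$ have dimension $2d-1$, $\Psi$ descends to a linear isomorphism. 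A short bilinear expansion then yields the key intertwining identity
\[
 \wt{W}\bigl(\Psi(\U_1,\U_3)\bigr)\;=\;\wt{W}(F_0,\U_1)\,+\,\wt{W}(F_2,\U_3)\;=\;-\,\dd Q_F(\U_1,\U_3),
\]
so that solutions of $\dd Q_F(\U)=0$ modulo the trivial IHHDs correspond bijectively to tangent vectors in $\ker\wt{W}\cap T_{F_0\wedge F_2}\wt{S}_d$. This already proves the ``equivalently'' clause of the lemma.

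It will remain to bridge from this affine kernel statement to the criticality of the projective Wronski map $W:S_d\to\CP^{2d-2}$. Since $\wt{W}$ is linear, its differential is itself; the source $T_{F_0\wedge F_2}\wt{S}_d$ and target $\CC[z]_{2d-2}$ both have affine dimension $2d-1$, and each contains a natural Euler direction, namely $F_0\wedge F_2$ in the source and $\wt{W}(F_0\wedge F_2)=F_0F_2'-F_2F_0'$ in the target. Because $F_0,F_2$ are linearly independent (as $\deg f = d$), both Euler vectors are nonzero and $\wt{W}$ carries one onto the other; passing to the quotient projective tangent spaces (each of dimension $2d-2$), the projective differential $dW$ fails to be an isomorphism precisely when $\wt{W}|_{T_{F_0\wedge F_2}\wt{S}_d}$ has a nonzero kernel. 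Combining everything yields the equivalence chain
\[
 f \text{ is a collapse point} \iff \text{an extra IHHD exists} \iff \ker\wt{W}\cap T_{F_0\wedge F_2}\wt{S}_d\neq 0 \iff [F_0\wedge F_2]\text{ is critical for }W.
\]

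The main obstacle will be the sign/kernel book-keeping: verifying the identity $\wt{W}\circ\Psi=-\dd Q_F$ correctly and confirming that the three obvious elements truly exhaust $\ker\Psi$. This last point is essential, since it is precisely what distinguishes the \emph{extra} IHHDs (which characterise collapse via Proposition \ref{prop:jn-pt}) from the trivial ones (which do not); once it is in place, the rest is a straightforward dimension count plus the standard affine-to-projective passage.
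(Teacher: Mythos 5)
Your proposal is correct and follows essentially the same route as the paper: the crux in both is the identity relating $\dd Q_F$ to the derivative of the Wronskian, which the paper states as $\dd\wt{W}_{(F_0,F_2)}(\U_3,-\U_1) = \dd Q_{(F_0,0,F_2,0)}(0,\U_1,0,\U_3)$ and which you recover (with a sign) via the comparison map $\Psi$. The paper's proof is essentially just this one displayed computation; your version makes explicit several points the paper leaves to the reader --- notably that the three trivial solutions exhaust $\ker\Psi$, that $\Psi$ therefore descends to an isomorphism onto $T_{F_0\wedge F_2}\wt{S}_d$, and the affine-to-projective passage via the Euler directions --- which is a welcome, if not strictly novel, amplification.
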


\begin{proof}
A simple calculation shows that the derivative of \eqref{Wtilde0} is given by
\begin{eqnarray*}
\dd\wt{W}_{(F_0,F_2)}(\U_3,-\U_1) &=&  - F_0 \U_1' + F_0' \U_1 - F_2 \U_3' + F_2' \U_3 \\
	&=& \{F_0, \U_1\} + \{F_2, \U_3\} \\
	&=& \dd Q_{(F_0,0,F_2,0)}(0,\U_1,0,\U_3) \,,
\end{eqnarray*}
where the second equality is from \eqref{Q-NF}, the bracket notation is as in \eqref{inf-hor}, and $Q$ is given by \eqref{Q}.
\end{proof}

Writing $F_0(z) = \sum_{i=0}^d a_i z^i$ and $F_2(z) = \sum_{i=0}^d c_i z^i$ for constants $a_i, c_i \in \CC$,  the map  \eqref{Wtilde0} is given by
$$
\wt{W}(F_0,F_2) = \sum_{i,j = 1}^d (j-i)a_i c_j z^{i+j-1} = \sum_{0 \leq i < j \leq d} (j-i)h_{ij} z^{i+j-1}
$$
where $h_{ij} = a_i c_j - a_j c_i$\,; thus, with respect to the basis $\{z^i \wedge z^j : 0 \leq i < j \leq d\}$ of $\wedge^2 \CC[z]_d$ and the basis $\{z^k: 0 \leq k \leq 2d-2\}$ of $\CC[z]_{2d-2}$\,,
the linear map \eqref{Wtilde}
is given by
$$
h_{ij} z^i \wedge z^j \mapsto (j-i)h_{ij} z^{i+j-1} \,.
$$
Hence \eqref{Wtilde} is represented by the $(2d-1) \times d(d+1)/2$ matrix $M_d$\,, with rows indexed by $k \in \{0,\ldots,2d-2\}$ and columns by $(i,j)$ where $0 \leq i < j \leq d$, with one non-zero entry in each column $(i,j)$, namely, $j-i$ at row $k = i+j-1$.

\subsection{Invariant Functions and formulae for small twistor degree} \label{subsec:invt}

As well as the action \eqref{action-post},  $\SL(2,\CC)$ acts on $\Hol_d(S^2,\CP^1)$ by \emph{pre}composition by a fractional linear transformation of the domain:
\begin{equation} \label{frac-linear}
z \mapsto \frac{az+b}{cz+d} \quad \text{for }z \in S^2 = \Ci \ , \ 
\left( \begin{matrix}a & b \\ c & d \end{matrix} \right) \in \SL(2,\CC).
\end{equation}
This gives an action on $\wedge^2\CC[z]_d$ and so on $S_d$.   It is clear that $C_d$ is invariant under this action, so that we expect it to be given as the zero set of polynomials invariant under this action.  We shall find such polynomials for  $d \leq 5$. This case is special because (i) $\Harm^{\ff}_d(S^2,S^4)$ is known to be a manifold for $d \leq 5$ \cite{BoWo-space}; (ii) from Remark \ref{rem:Kotani}, $\Harm^{\ff}_d(S^2,S^6)$ is empty, equivalently, the space of extra eigenfunctions has dimension $\leq2$ for all $f \in \Hol_d(S^2,\CP^1)$ if and only if $d \leq 5$.
We now study the cases $d=3, 4,5$ separately.

\medskip

\emph{The case $d=3$.}  In this case, the linear map \eqref{Wtilde} is given by
\begin{equation} \label{Wtilde-d3}
(h_{01},h_{02},h_{03},h_{12},h_{13},h_{23}) \mapsto (h_{01}, 2h_{02}, 3h_{03}+h_{12}, 2h_{13}, h_{23})
\end{equation}
and we have $\wt{S}_3 = \{  (h_{01},h_{02},h_{03},h_{12},h_{13},h_{23}) \in \CC^6 : r  = 0\}$
where $r = h_{01} h_{23} - h_{02} h_{13} + h_{03}h_{12}$\,.

The kernel of \eqref{Wtilde-d3} is clearly one-dimensional; it is tangent to $\wt{S}_3$ precisely when the $6 \times 6$ matrix  $\Mm_3$ made up of $M_3$ with the row vector $\grad r$ adjoined is singular.  Now
$$
\Mm_3 = \left( \begin{array}{rrrrrr} 1 & 0 & 0 & 0 & 0 & 0 \\
				0 & 2 & 0 & 0 & 0 & 0 \\
				0 & 0 & 3 & 1 & 0 & 0 \\
				0 & 0 & 0 & 0 & 2 & 0 \\
				0 & 0 & 0 & 0 & 0 & 1 \\
	h_{23} & -h_{13} & h_{12} & h_{03} & -h_{02} & h_{01} \end{array} \right)
$$
and this is singular precisely when $3 h_{03} - h_{12} = 0$.  Hence, the set $C_3 \in \Hol_3(S^2, \CP^1)$ of collapse points is the linear algebraic variety
$$
C_3 = \bigl\{[h_{01},h_{02},h_{03},h_{12},h_{13},h_{23}] \in S_d : 3 h_{03} - h_{12} = 0 \bigr\}\,.
$$

In more concrete terms, this says that $f = [F_0,F_2] \in C_3$ if and only if the coefficients of $F_0$ and $F_2$ satisfy $3a_0 c_3 - a_1 c_2 + a_2 c_1 - 3a_3 c_0 = 0$.

  It can be checked that $P_3 = 3h_{03} - h_{12}$ is indeed $\SL(2,\CC)$-invariant.  We interpret this polynomial. 
Guided by invariant theory (see, for example, \cite{Gur}), for any $d$, put $h_{ij} = \binom{d}{i}\binom{d}{j}H_{ij}$ \ $(i,j = 1, \ldots, d)$.
Then the fundamental invariant linear polynomial in the bivector $(H_{ij})$ is its \emph{trace}:  $\Tr \, H_{ij} = \sum_{i=1}^d(-1)^i \binom{d}{i} H_{i,n-i}$\,. This gives an invariant polynomial for any $d$, though it is zero when $d$ is even by the antisymmetry of $H_{ij}$\,.  To understand this better, write $\iI = (i_1, \ldots, i_d) \in I = \{0,1\}^d$.  Also write $i = |\iI| \equiv \sum_{t=1}^d i_t$\,, and similarly for other multi-indices.    Write $\ov{i_t}$ for the complement $1 - i_t$\,. Then, up to scale,
$$
P_3 = \Tr \, H_{ij} \equiv \sum_{\iI \in I }(-1)^i H_{ij}
$$
where $j = |\jJ|$ with $\jJ$ determined by the rule $j_t = \ov{i_t}$ \ $(t=1,\ldots, d)$.  It is easy to prove that this is the only invariant linear polynomial up to scale. 

\smallskip
For any $d$, we can, in principle, find polynomials giving $C_d$ in a similar way.   $\wt{S}_d$ is given by the vanishing of the $\binom{d+1}{4}$ equations
$$
r_{ijkl} \equiv h_{ij}h_{kl} - h_{ik}h_{jl} + h_{il}h_{jk} = 0 \qquad (0 \leq i < j < k < l \leq d) \,;
$$
the gradients of the $r_{ijkl}$ give a matrix of rank $(d-1)(d-2)/2 = \codim\, \wt{S}_d$\,.
We form a $\bigl(2d-1 + \binom{d}{4} \bigr) \times d(d+1)/2$ matrix $\Mm_d$ from $M_d$ by adjoining those gradients; then $C_d$ is the set on which $\Mm_d$ fails to have maximal rank.  We find this by calculating the highest common factor of all the minors of size $d(d+1)/2$, always working modulo the ideal $R$ generated by the $r_{ijkl}$, giving a polynomial $P_n$ in the $h_{ij}$.   As above, this is $\SL(2,\CC)$-invariant; we shall now identify it for $d=4,5$.

\medskip

\emph{The case $d=4$.}  Computer calculation of the above hcf gives
$$
P_4 = -2\,h_{{02}}h_{{24}}+3\,(h_{{03}}h_{{23}}+h_{{12}}h_{{14}})+18\,{h_{{04}}}^{\! 2}-9\,h_
{{04}}h_{{13}} -h_{{12}}h_{{23}}\, (\!\!\!\!\!\!\mod R).
$$
We shall express this in terms of invariant polynomials. Guided by the case $d=3$, we form the invariant polynomial
$$
P_4^1 = \sum_{\iI,\jJ \in I} (-1)^{i+j}H_{ik}H_{jl}
$$
where $k = |\kK|$ and $l = |\lL|$ with $\kK$ and $\lL$ determined by the rule $k_t = \ov{j_t}$, $l_t = \ov{i_t}$   \ $(t=1,\ldots, d)$.  We can define another invariant polynomial $P_4^2$ by the same formula but with the more complicated rule
$k_1 = \ov{i_1}$, $k_2 = \ov{i_2}$, $k_3 = \ov{j_1}$, $k_4 = \ov{j_2}$,
$l_1 = \ov{i_3}$, $l_2 = \ov{i_4}$, $l_3 = \ov{j_3}$, $l_4 = \ov{j_4}$.
Note that both these polynomials are sorts of traces.
Then computer studies show that $P_4^1$ and $P_4^2$ form a basis for the vector space of quadratic invariant polynomials modulo $R$, and that 
$$
P_4 = -\tfrac{1}{3} P_4^1 + \tfrac{4}{3} P_4^2 \quad  (\!\!\!\!\!\!\mod R). 
$$

\medskip

\emph{The case $d=5$.}  Similarly, we can define invariant polynomials $P_5^s$ \ $(s=1,2,3,4)$ by
$$
P_5^s = \sum_{\iI,\jJ,\rR \in I} (-1)^{i+j+r}H_{ik}H_{jl}H_{rs}
$$
where $k = |\kK|$, $l = |\lL|$ and $s = |\sS|$ with $\kK$, $\lL$ and $\sS$ determined by the following rules:

\smallskip

$(s=1)$: $k_t = \ov{i_t}$, $l_t = \ov{j_t}$, $s_t = \ov{r_t}$ $(t=1, \ldots, 5)$;

\smallskip

$(s=2)$: $k_t = \ov{i_t}$ $(t=1,2,3)$, $k_4=\ov{j_1}$, $k_5=\ov{j_2}$, $l_1=\ov{i_4}$, $l_2=\ov{i_5}$, $l_3=\ov{r_1}$, $l_4=\ov{r_2}$, $l_5=\ov{r_3}$, $s_1=\ov{j_3}$, $s_2=\ov{j_4}$, $s_3=\ov{j_5}$, $s_t=\ov{r_t}$ $(t=4,5)$;

\smallskip

$(s=3)$: $k_t = \ov{i_t}$ $(t=1,2,3,4)$, $k_5=\ov{j_1}$, $l_1=\ov{i_5}$, $l_2=\ov{r_1}$, $l_3=\ov{r_2}$, $l_4=\ov{r_3}$, $l_5=\ov{r_4}$, $s_1=\ov{j_2}$, $s_2=\ov{j_3}$, $s_3=\ov{j_4}$, $s_4=\ov{j_5}$, $s_5=\ov{r_5}$\,;

\smallskip

$(s=4)$: $k_1=\ov{r_4}$, $k_2=\ov{r_5}$, $k_3=\ov{i_1}$, $k_4=\ov{i_2}$, $k_5=\ov{i_3}$, $l_1=\ov{i_4}$, $l_2=\ov{i_5}$, $l_3=\ov{j_1}$, $l_4=\ov{j_2}$, $l_5=\ov{j_3}$, $s_1=\ov{j_4}$, $s_2=\ov{j_5}$, $s_3=\ov{r_1}$, $s_4=\ov{r_2}$, $s_5=\ov{r_3}$\,.

\medskip

Note that $P_5^1$ is the cube of the trace of $(H_{ij})$.  More invariant polynomials can be formed by similar rules.  However, computer studies show that these four form a basis for the invariant cubic polynomials modulo $R$, and that
$$
P_5 = \tfrac{671}{6} P_5^1 - \tfrac{125}{2} P_5^2 - 225 P_5^3 + \tfrac{125}{3} P_5^4
		\quad (\!\!\!\!\!\!\mod R).
$$

\subsection{The influence of branch points} \label{subsec:branch}

Let $R \in \CC_{2d-2}[z]$.  The inverse image of $R$ under the Wronski map \eqref{W-S} consists of elements $[F_0 \wedge F_2] \in S_d$ such that the position and order of the branch points of $f = [F_0,F_2]:S^2 \to \CP^1$ are given by the zeros of $R$.  In general, the number of inverse images = the degree of the Wronski map = the Catalan number $\frac{1}{d}\binom{2d-2}{d-1}$. When some of the inverse images coalesce, they give a critical point of $W$, i.e. a collapse point.

\smallskip

\emph{The case $d=3$}.  Then the Wronski map has degree $2$.  Any $f \in \Hol_3(S^2,\CP^1)$ has four branch points up to order.  
If the branch points of $f$ are not all simple, then, by a fractional linear transformation, they can be placed at, say, $0$, $1$ and $\infty$, then direct calculation of $P_3$ shows that $f$ is not a collapse point.  This also follows from Proposition \ref{prop:max-br-pt} below.

So we may suppose that $f$ has four distinct simple branch points.
By $SL(2,\CC)$-invariance, we can assume that three of these points are at $0$, $1$ and $\infty$; let $\alpha$ be the fourth one.
Then $R = z(z-1)(z-\alpha)$ and solving \eqref{Wtilde-d3} for $f \in S_d$, we find that
$$
[F_0,F_2] = [z^3+(6\be-2-2\al)z^2, -\tfrac{1}{2}z+\be]
$$
where $\be$ is a root of $12\be^2+(-4-4\al)\beta + \al = 0$\,.

These roots are usually distinct; they coalesce when the discriminant $\al^2- \al+1$ is zero giving $\al = (1 \pm i\sqrt{3})/2 = \eu^{\pm\ii\pi/3}$, the primitive cube roots of $-1$, so that $R = z^3+1$.  In general, a quadruplet of points in $S^2 = \Ci$ gives rise to six different values of the cross ratio.  It is called \emph{equianharmonic} if these values reduce to two, in which case the cross ratios are the above primitive cube roots of $-1$;  equivalently, by a fractional linear transformation, we can place the points at the vertices of a regular tetrahedron on $S^2$.   Taking those vertices to be $1,\omega,\omega^2,\infty$ where $\omega = \eu^{2\pi\ii/3}$ is a primitive cube root of $+1$, we obtain
\begin{equation} \label{can-d3}
[F_0,F_2] = [z^3+2, -\tfrac{1}{3}z]\,.
\end{equation}

Hence \emph{$f$ is a collapse point (equivalently, admits extra eigenfunctions) if and only if it is given by \eqref{can-d3} up to fractional linear transformations of the domain and codomain, equivalently, if and only if it has simple branch points which form an equianharmonic quadruplet}. This is proved by a different method in \cite{MoRo}.

\smallskip

\emph{The case $d\geq 4$}.  Then the Wronski map has degree at least $5$.  Any $f \in \Hol_d(S^2,\CP^1)$ has $2d-2 \geq 6$ branch points up to multiplicity and there are many possibilities for their positions and orders to consider; we give an example where the branch points exhibit symmetry through the origin.  We see that collapse points occur when the branch points are so configured that some preimages of the Wronski map coalesce.

\begin{exm}
Suppose that $d=4$ and that $f$ has six distinct simple branch points at $\{0,1,-1,\al,-\al, \infty\}$ Then $R = (z^2-1)(z^2 - \al)$.   Solving $\wt{W}(F_0,F_2) = R$ by computer gives the five solutions:
\begin{eqnarray*}
f_1 &= &[z^4 - \al^2,\, -z^2/2 + (\al^2+1)/4] \\
f_2 \text{ and } f_3 &= &\bigl[z^4+\be z^3 + \al\be z - \al^2,\, -\tfrac{1}{2}z^2 + \tfrac{1}{6}\be z - \tfrac{1}{6}\al \bigr] \\
f_4 \text{ and } f_5 &= &\bigl[z^4+\ga z^3 + \al\ga z - \al^2,\, -\tfrac{1}{2}z^2 + \tfrac{1}{6}\ga z + \tfrac{1}{6}\al \bigr]
\end{eqnarray*}
where $\be = \pm\sqrt{3\al^2+2\al+3}$ and $\ga = \pm\sqrt{3\al^2-2\al+3}$\,.

It can be checked that when $\be$ and $\ga$ are non-zero, none of the five solutions gives $P_4 = 0$, so are not collapse points.  When $\be = 0$, i.e., when $\al = (-1 \pm i\sqrt{8})/3$, the solutions $f_1$, $f_2$ and $f_3$ coalesce to give a collapse point.  Similarly, when $\ga = 0$, i.e., when $\al = (+1 \pm i\sqrt{8})/3$, the solutions $f_1$, $f_4$ and $f_5$ coalesce to give a collapse point.
\end{exm}

Note that the complexity of the problem grows quickly with the degree $d$.  Indeed, the Catalan number $\frac{1}{d}\binom{2d-2}{d-1}$, which gives the degree of the Wronski map, increases rapidly with $d$.  For $d=6$, it is $42$; for $d=9$, it is more than a thousand, for $d=20$ it is more than a billion.

We finish with a general result which shows that, if $f$ has a branch point of maximal order, then it is not a collapse point.

\begin{prop} \label{prop:max-br-pt}
Suppose that $f \in \Hol_d(S^2,\CP^1)$ has a branch point of order $r \geq d-1$ somewhere.  Then $r = d-1$ and $f$ is not a collapse point.
\end{prop}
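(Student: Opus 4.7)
The plan is to show, equivalently by Proposition~\ref{prop:jn-pt} and Corollary~\ref{cor:smooth-reg}, that $f$ is a regular point of $\HHol_d(S^2,\CP^3)$, i.e.\ that $\dd Q_F$ is surjective. I would proceed in two stages: first reduce to a normal form, then analyze $\dd Q_F$ in coordinates by recognizing a triangular structure.

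For the reduction, I would use a fractional linear transformation on the domain to place the branch point at $z = 0$, and then apply a rotation in the $\U(2)$ stabilizer of $\CP^1_0$ in $\Sp(2)$ (which acts on $\CP^1_0 \cong S^2$ as $\SO(3)$, cf.\ the discussion after Lemma~\ref{lem:CP0^1}) to arrange $f(0) = [1,0]$, i.e.\ $F_0(0) \neq 0$ and $F_2(0) = 0$. A local expansion of $f$ at $0$ then forces $F_2$ to vanish to order at least $r+1$ at $z = 0$; since $F_0, F_2$ are coprime and $\deg F_2 \leq d$, we have $r + 1 \leq d$. Combined with the hypothesis $r \geq d-1$, this yields $r = d-1$, $F_2(z) = c z^d$ for some $c \neq 0$, and $F_0 \in \CC[z]_d$ with $F_0(0) \neq 0$.

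For the second stage, from~\eqref{Q-NF} the restriction of $\dd Q_F$ to normal directions is the map $(\U_1, \U_3) \mapsto \{F_0, \U_1\} + \{F_2, \U_3\}$ into $\CC[z]_{2d-2}$. With $F_2 = c z^d$, writing $\U_3 = \sum_k b_k z^k$ gives $\{F_2, \U_3\} = c \sum_{k=0}^{d-1}(d-k)\,b_k\,z^{d-1+k}$; as $\U_3$ varies this exhausts exactly the $d$-dimensional subspace $z^{d-1}\CC[z]_{d-1} \subset \CC[z]_{2d-2}$. Surjectivity of $\dd Q_F$ onto $\CC[z]_{2d-2}$ therefore reduces to surjectivity of the induced map
\[
T : \CC[z]_d \to \CC[z]_{d-2}, \qquad \U_1 \mapsto \{F_0, \U_1\} \bmod z^{d-1},
\]
onto a space of dimension $d-1$.

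To handle this, writing $F_0 = \sum_i a_i z^i$ and $\U_1 = \sum_j b_j z^j$, a short calculation gives the coefficient of $z^k$ in $\{F_0, \U_1\}$ as $\sum_j (k-2j+1)\,a_{k-j+1}\,b_j$ (summed over $j$ with $k-j+1 \geq 0$). Viewing $T$ as a $(d-1) \times (d+1)$ matrix with rows indexed by $k \in \{0,\ldots,d-2\}$ and columns by $j \in \{0,\ldots,d\}$, the entry vanishes whenever $j > k+1$ and equals $-(k+1) a_0$ when $j = k+1$. Hence the $(d-1) \times (d-1)$ submatrix on columns $j = 1, \ldots, d-1$ is lower triangular with diagonal $-a_0, -2a_0, \ldots, -(d-1) a_0$; since $a_0 = F_0(0) \neq 0$, it is nonsingular, so $T$ has maximal rank and is surjective. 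The main technical point is spotting this triangular structure, which is the algebraic shadow of the geometric hypothesis: the vanishing of $F_2 = c z^d$ being concentrated at a single point to maximum order decouples the low-degree contributions in $\{F_0, \U_1\}$ from the high-degree contributions controlled by $\{F_2, \U_3\}$.
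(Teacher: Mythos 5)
Your proof is correct and takes essentially the same approach as the paper's: normalize the high-order branch point to a standard location, derive the resulting normal form for $F_2$, and verify that $\dd Q_F$ is surjective by splitting $\CC[z]_{2d-2}$ into a low-degree and a high-degree part. The only difference is the choice of normalization: the paper places the branch point at $\infty$, making $F_2$ a constant so that $\{F_2,\U_3\}=\U_3'$ sweeps out the low degrees immediately, whereas you place it at $0$, giving $F_2=c\,z^d$, so $\{F_2,\U_3\}$ sweeps out $z^{d-1}\CC[z]_{d-1}$ and the residual low-degree part needs your lower-triangular-matrix argument; the two normal forms are related by $z\mapsto 1/z$.
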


\begin{proof}
We can assume that the branch point is at infinity so that $R$ has degree $2d-2-r \leq d-1$.  Now,  since $f = [F_0,F_2]$ has degree $d$, then one of $F_0$ or $F_2$ has degree exactly $d$.  Since $\deg R = \deg F_0 + \deg F_2 -1$ for $\deg F_2  < d$, we must have $r=d-1$, and $\deg F_2 = 0$, i.e. $F_2$ is a constant which we can take to be $1$.   But then
$\dd Q_{(F_0,F_2)}(0,0,0,\U_3) = \{1,\U_3\} = \U_3'$; on putting $\U_3$ equal to $z,\ldots, z^{d}$, this gives multiples of $1,\ldots, z^{d-1}$, respectively.
On the other hand, $\dd Q_{(F_0,F_2)}(0,\U_1,0,0) = \{F_0,\U_1\}$; on putting $\U_1$ equal to $z,\ldots, z^{d-1}$, this gives polynomials of degree exactly $d, \ldots, 2d-2$, respectively.   The image of $\dd Q_{(F_0,F_2)}$ thus contains a basis for $\CC[z]_{2d-2}$ so that $Q$ is submersive at $f$.  Hence $f$ is not a collapse point.
\end{proof}

Note that the first part of the result confirms the well-known result that the maximum possible order of a branch point of a map $f \in \Hol_d(S^2,\CP^1)$ is $d-1$.

%\section*{References}

\end{document}